\patchcmd{\bibliographystyle}{#1}{spmpsci}{}{}
\newtheorem{Theorem}{Theorem}
\newtheorem{Lemma}{Lemma}
\newtheorem{Definition}{Definition}
\newtheorem{Assumption}{Assumption}
\newtheorem{Remark}{Remark}
\crefname{subsection}{Subsection}{Subsections}
\crefname{figure}{Figure}{Figures}
\crefname{Theorem}{Theorem}{Theorems}
\crefname{Definition}{Definition}{Definitions}
\crefname{Lemma}{Lemma}{Lemmas}
\crefname{Assumption}{Assumption}{Assumptions}
\renewcommand{\rho}{\varrho}
\newcommand{\PC}{\mathcal{P}\mathcal{C}}
\newcommand{\psim}{\psi^m}
\newcommand{\pt}{\partial_t}
\newcommand{\eps}{\varepsilon}
\newcommand{\ds}{\,\textup{ds}}
\newcommand{\dx}{\,\textup{d}x}
\newcommand{\dxq}{\,\textup{d}(x,q)}
\newcommand{\rhol}{\rho^\l}
\renewcommand{\div}{\textup{div}}
\newcommand{\divq}{\div_{\!q}}
\newcommand{\dd}{\mathop{}\!\mathrm{d}}
\newcommand{\ddt}{\frac{\dd}{\dd\mathrm{t}}}
\newcommand{\dt}{\,\textup{d}t}
\renewcommand{\p}{\partial}
\newcommand{\con}{\subset}
\newcommand{\dq}{\,\textup{d}q}
\newcommand{\hpsi}{{\widehat{\psi}}}
\renewcommand{\l}{\ell}
\newcommand{\hpsil}{\hpsi^\l}
\newcommand{\hpsilm}{\hpsi^{\l,m}}
\newcommand{\ul}{u^\l}
\newcommand{\um}{u^{m}}
\newcommand{\ulm}{u^{\l,m}}
\newcommand{\rhom}{\rho^{m}}
\newcommand{\hpsilmn}{\hpsi^{\ell,m,n}}
\newcommand{\ulmn}{u^{\ell,m,n}}
\newcommand{\umn}{u^{m,n}}
\newcommand{\hpsimn}{\hpsi^{m,n}}
\newcommand{\hpsim}{\hpsi^{m}}
\newcommand{\nablaxq}{\nabla_{\!x,q}}
\newcommand{\nablaq}{\nabla_{\!q}}
\newcommand{\nablaqj}{\nabla_{\!q^j}}
\renewcommand{\R}{\mathbb{R}}
\newcommand{\N}{\mathbb{N}}
\newcommand{\I}{0,T}
\newcommand{\D}{D}
\newcommand{\W}{\mathcal{W}}
\renewcommand{\O}{\mathcal{O}}
\newcommand{|}{\vert}
\renewcommand\@biblabel[1]{#1.} 
\newcommand{\dhookrightarrow}{\hookrightarrow\hspace{-3mm}\hookrightarrow }
\newcommand{\doi}[1]{}
\begin{document}

    \title[On the well-posedness of a nonlocal kinetic model for dilute polymers]{\vspace{-2.1cm} On the well-posedness of a nonlocal kinetic model for dilute polymers with anomalous diffusion}
    \author{\fnm{Marvin} \sur{Fritz$^1$}}
    \affil{\small\orgdiv{${}^1$Radon Institute for Computational and Applied Mathematics},  \orgaddress{\city{Linz}, \country{Austria}}}

    \author{\fnm{Endre} \sur{S\"{u}li$^2$}}
    \affil{\small\orgdiv{${}^2$Mathematical Institute}, \orgname{University of Oxford}, \orgaddress{\city{Oxford}, \country{United Kingdom}}}

    \author{\fnm{Barbara} \sur{Wohlmuth$^3$}}
    \affil{\small\orgdiv{${}^3$School of Computation, Information and Technology}, \orgname{Technical University of Munich}, \orgaddress{\city{Munich}, \country{Germany\vspace{-1.2cm}}}}
	
	\abstract{
		In this work, we study a class of nonlocal-in-time kinetic models of incompressible dilute polymeric fluids. The system couples a macroscopic balance of linear momentum equation with a mezoscopic subdiffusive Fokker--Planck equation governing the evolution of the probability density function of polymer configurations. The model incorporates nonlocal features to capture subdiffusive and memory-type phenomena. Our main result asserts the existence of global-in-time large-data weak solutions to this nonlocal system. The proof relies on an energy estimate involving a suitable relative entropy, which enables us to handle the critical general non-corotational drag term that couples the two equations. As a side result, we prove nonnegativity of the probability density function. A crucial step in our analysis is to establish strong convergence of the sequence of Galerkin approximations by a combination of techniques, involving a novel compactness result for nonlocal PDEs. Lastly, we prove the uniqueness of weak solutions with sufficient regularity.
    }

    \keywords{Existence of weak solutions; Micro-Macro model; Nonlocal-in-time PDEs; Navier–Stokes–Fokker–Planck system;  Fractional derivatives; Galerkin approximation; Entropy estimates}
    \pacs[MSC Classification]{35Q30; 35Q84; 35R11; 60G22; 82C31; 82D60}

    \maketitle

    \vspace{-.6cm}

	\section{Introduction}
Kinetic models of dilute polymeric fluids are extensively described in the monographs \cite{bird1987dynamics,ottinger2012stochastic} and the review article \cite{li2007mathematical}.
Such models involve the coupling of the macroscopic Navier--Stokes equations for the description of incompressible fluid flow and the Fokker--Planck equation for the mezoscopic processes associated with
the statistical properties of polymer molecules immersed in the fluid. 

Differential equations exhibiting nonlocal-in-time features have been the focus of considerable attention in the mathematical and engineering literature in recent years; see the textbooks on nonlocal effects with applications in viscoelasticity \cite{mainardi2022fractional,yang2020general}, mathematical finance \cite{fallahgoul2016fractional}, and mechanical processes \cite{atanackovic2014fractional,pilipovic2014fractional}. Such equations have an innate history effect and often involve fractional derivatives. They are of relevance in applications where memory effects are present and hereditary properties of materials are studied. The time-fractional Fokker--Planck system, in particular, models subdiffusive behaviour and has been previously studied in \cite{angstmann2015generalized,heinsalu2007use,magdziarz2008equivalence,Weron2008ModelingEquation,fritz2024well}. Further developments within the fractional calculus framework can be found in the survey papers \cite{HahnUmarov2011,Sandev2015,AwadMetzler2020}, where generalised memory kernels and fractional Fokker--Planck--Kolmogorov equations were analysed.
 
This paper is concerned with the existence and uniqueness of weak solutions to a system of nonlocal-in-time PDEs, whose local counterpart arises in the kinetic theory of dilute solutions of polymeric fluids. 
In addition to the classical theory, we incorporate memory/history effects and subdiffusive dynamics into the system. In our previous work~\cite{fritz2024analysis}, we considered a Riemann--Liouville time-fractional derivative in the system. Although that model captured essential physical phenomena, our analysis relied on a simplified, corotational, drag term in the Fokker--Planck equation, enabling us to establish energy estimates in a Hilbert space setting and prove the existence of weak solutions.
The extension of that analysis to a physically realistic drag term involves significant mathematical obstacles, primarily because of the behaviour of the Riemann--Liouville derivative at $t=0$. The fundamental difficulty lies in the singular nature, at initial time,  of solutions to the time-fractional Fokker--Planck equation involving the Riemann--Liouville time derivative, which conflicts with the physical requirement that the nonnegativity of the solution as a probability density function should be inherited from the nonnegativity of the initial datum specified at $t=0$. To address these limitations while preserving the memory effect, we introduce a nonlocal derivative involving a singular kernel of Prabhakar--Caputo type ($\mathcal{P}\mathcal{C}$ type) in the two diffusion terms and in the advection terms that appear in the Fokker--Planck equation. 
This approach is, in general, \textit{not} equivalent to the one we considered in our previous work \cite{fritz2024analysis}.  

The paper is structured as follows. In \Cref{Sec:Prelim}, we introduce the notation used throughout the paper and formulate the mathematical model that we shall study. We then define several function spaces of Sobolev type and recall some important results from the theory of fractional derivatives, including Alikhanov's inequality as a replacement for the product/chain rule. We close the section with the derivation of a novel compactness result involving nonlocal derivatives, which plays a central role in the mathematical analysis of the model. In \Cref{Sec:Analysis}, we state the weak formulation of the model problem and the assumptions concerning the data for the problem. The main results of the paper are contained in Theorems \ref{Thm:Main}, \ref{Thm:Main2} and \ref{Thm:WeakStrong}; they guarantee, respectively, the existence of a large-data global-in-time weak solution to the problem in terms of the velocity field and the probability density function, the existence of a pressure, and the uniqueness of weak solutions with sufficient regularity. The proof of Theorem \ref{Thm:Main} is based on a spatial Galerkin approximation in conjunction with a compactness argument. We also prove the nonnegativity of the probability density function, which enables us to use a suitable logarithmic relative entropy involving the Maxwellian associated with the model to derive an energy inequality. 
We close the paper in \cref{Sec:Outlook} with concluding remarks and formulate some open questions. We note in passing that spatial discretizations of dilute polymer models based on spatial finite element approximations were previously considered in \cite{barrett2009numerical,barrett2011finite,barrett2012finite} and of nonlocal-in-time nonlinear PDEs in \cite{fritz2021subdiffusive,fritz2022time,fritz2022equivalence}.

    \section{Preliminaries} 
\label{Sec:Prelim}
In this section, we state our model problem and introduce the function spaces required in our analysis. Most of the analytical tools that we use can be found in standard textbooks such as \cite{roubicek2013nonlinear,Boyer2013,Fonseca2006,brezis2011functional}. Nonlocal derivatives and their properties play a key role in our analysis; standard bibliographical sources on the analysis of time-fractional differential equations include \cite{kubica2020time,jin2021fractional,diethelm2010analysis}. 

\subsection{Definitions}
Let $\Omega \subset \mathbb{R}^d$, $d \in \{2,3\}$, be a bounded Lipschitz domain, $T > 0$ the length of the time interval, and $\Omega_T := \Omega \times (\I)$ the space-time domain. The conformation space $\D \subset \mathbb{R}^{d \times K}$ is a $K$-fold Cartesian product $\D := \D^1 \times \dots \times \D^K$, where each $\D^j$, $j \in \{1, \dots, K\}$, is an open bounded $d$-dimensional ball centred at the origin, and the extended domains are defined as $\mathcal{O} := \Omega \times \D$ and $\mathcal{O}_T := \mathcal{O} \times (\I)$. Moreover, the Maxwellian distribution $M(q)=\prod_{j=1}^K M^j(q^j)$ on $D$ is given component-wise on $D^j$, $j\in\{1, \dots, K\}$, by:
\begin{equation} \label{Def:Maxwellian}
M^j(q^j)=\frac{e^{-U^j(\frac12 |q^j|^2)} }{\textstyle\int_{\D^j} e^{-U^j(\frac12|q^j|^2)} \dd q^j},
\end{equation}
for a prescribed spring potential $q^j \in D^j \mapsto U^j(\frac{1}{2}|q^j|^2)$ contained in $C^1(D^j;\mathbb{R}_{\geq 0})$, which is required to be such that $U^j(\frac{1}{2}|q^j|^2) \to + \infty$ as $\mbox{dist}(q^j, \partial D^j) \to 0$,  $j \in \{1, \ldots, K\}$.
Consequently, $M \in C^1_0(\overline{D})$.

Lastly, we introduce a kernel $k \in L^1_{\text{loc}}(\mathbb{R}_+)$  of $\mathcal{P}\mathcal{C}$ type; that is, we assume that $k$ is nonnegative and nonincreasing, and there exists a resolvent kernel $\tilde k \in L^1_{\text{loc}}(\mathbb{R}_+)$ such that the Sonine property $k * \tilde k = 1$ is fulfilled. In this case, we say that $(k,\tilde k)$ is a $\mathcal{P}\mathcal{C}$ pair and write $(k,\tilde k) \in \mathcal{P}\mathcal{C}$. We refer to \cite{zacher2008boundedness} for the analysis of a nonlocal diffusion equation with kernels of $\PC$ type. The prototypical example is the singular Abel kernel $g_\alpha(t) = t^{\alpha-1}/\Gamma(\alpha)$ for $\alpha \in (0,1)$, which is central to fractional calculus; we refer to \cite{Hanyga2020} for a discussion of regular versus singular kernels in generalised fractional derivatives. From now on $k$ will always be assumed to be a $\PC$ type kernel.

\subsection{Modelling}\label{Sec:Modelling}

Dilute polymers are viscoelastic fluids, which can be viewed as mixtures of a viscous base liquid, typically a Newtonian fluid, and elastic polymer macromolecules flowing in it (without self-interaction and with no interaction between the polymer molecules). 
This is reflected in the form of the Cauchy stress tensor, which is the sum of two parts: one part being the classical Newtonian viscous stress tensor, corresponding to the response of the viscous base liquid, and the other, 
called the elastic extra-stress tensor and denoted by $\mathbb{S}$, corresponding to the elastic response. In a bead-spring chain model for dilute polymers, see \cite{cascales1991simulation,kirkwood1967Macromolecules,barrett2011existence}, where each polymer chain immersed in the base fluid is assumed to consist of $K + 1$ beads linearly coupled with $K$ elastic springs to represent a polymer chain, the elastic extra-stress tensor $\mathbb{S}$ is defined by the Kramers expression (see eq.~\eqref{Kramers} below) through the probability density function $\psi=\psi(x,q,t)$, depending on $(x,t) \in Q_T$ and the conformation vector $q=((q^1)^{\rm T},\dots,(q^K)^{\rm T})^{\rm T} \in \R^{d \times K}$ of the chain, with the column vector $q^j=(q_1^j,\dots,q_d^j)^{\mathrm{T}}$ representing the $d$-component conformation vector of the $j$-th spring in the bead-spring chain. 

The evolution of the velocity $u$ and the pressure $p$ of the fluid is governed by the following system of partial differential equations that models the balance of linear momentum and incompressibility, respectively:
\begin{equation} \label{Def:NS2} \begin{aligned}
    \pt u + \div(u \otimes u) - \div(-p \mathbb{I} + 2\nu \mathbb{D}(u)  + \mathbb{S}(\hpsi)) &= 0 \quad \text{ in } \Omega_T, \\
    \div\, u &= 0 \quad \text{ in } \Omega_T,
\end{aligned} \end{equation} 
where $\nu$ is the viscosity coefficient and $\mathbb{D}(u):=\frac{1}{2}(\nabla u + (\nabla u)^{\mathrm{T}})$ is the symmetric velocity gradient. The density of the (incompressible) fluid under consideration is assumed to be constant and is, without loss of generality, taken to be identically equal to $1$. We define the polymeric extra-stress tensor $\mathbb{S}$ by the Kramers expression
\begin{equation}\label{Kramers}
\mathbb{S}(\hpsi(x,\cdot,t)):=- K \left(\int_D \psi(x,q,t) \dd q\right) \mathbb{I}  + \sum_{j=1}^K \int_\D \psi(x,q,t) q^j (q^j)^{\mathrm{T}} (U^j)'(\tfrac12 |q^j|^2) \dd q, 
\end{equation}
where $\mathbb{I} \in \mathbb{R}^{d \times d}$ is the identity matrix, and we recall that $U^j$ is the spring potential that appears in the definition of the Maxwellian \cref{Def:Maxwellian}. We observe that
$$M(q)\nabla_{q^j} (M(q))^{-1}=-(M(q))^{-1} \nabla_{q^j} M(q)=\nabla_{q^j} U^j(\tfrac12 |q^j|^2) = (U^j)'(\tfrac12|q^j|^2)q^j.$$
Inserting the equality $(U^j)'(\tfrac12|q^j|^2)q^j=-(M(q))^{-1} \nabla_{q^j} M(q)$
into the second summand in the definition of the extra-stress tensor \cref{Kramers}, performing partial integration using that $M|_{\partial D} = 0$, we deduce that
\begin{equation}\label{Def:TensorS}
\mathbb{S}(\hpsi)=\sum_{j=1}^K \int_\D M(q) \nabla_{q^j} \hpsi(x,q,t) \otimes q^j \dd q,
\end{equation}
%
where we introduced the Maxwellian-scaled probability density function $\hpsi:=\psi/M$. 

In this work, we assume that the evolution of $\psi$ is governed by the nonlocal-in-time Fokker--Planck equation
\begin{equation}\label{Eq:FokkerBeforeConv}\begin{aligned}&\pt \psi + \div(\pt(\tilde k*(u \psi)) + \sum_{j=1}^K \div_{q^j} (\pt(\tilde k*((\nabla u)q^j \psi)) \\[-.2cm] &\quad =  \eps\Delta \pt(\tilde k*\psi) +  \sum_{i=1}^K \sum_{j=1}^K \frac{A_{ij}}{4\lambda}\div_{q^i} (M \nabla_{q^j} \pt(\tilde k*\hpsi)),
\end{aligned}
\end{equation}
which exhibits anomalous diffusion effects and has transport coefficients that depend on the velocity field $u$. Here, $\eps>0$ denotes the centre-of-mass diffusion coefficient, $\lambda$ represents the elastic relaxation parameter, and $\mathbb{A}=(A_{ij})_{i,j=1}^K\in \R^{K\times K}_{\text{sym}}$ is a constant positive definite matrix. 
Permitting $\mathbb{A}$ to depend on $(x,q,t) \in \Omega \times D \times [0,T]$ would not create additional complications in our analysis, provided that the matrix-function $ \in L^\infty(\Omega \times D \times [0,T]; \mathbb{R}^{K \times K}_{\mathrm{sym}})$ is uniformly positive definite on $\Omega \times D \times [0,T]$. We shall therefore simply assume that $\mathbb{A} \in \mathbb{R}^{K \times K}_{\mathrm{sym}}$ is a constant positive definite matrix. We note that choosing $\tilde k =1$ in \eqref{Eq:FokkerBeforeConv} recovers the standard Navier--Stokes--Fokker--Planck system.

For simplicity of exposition, and without loss of generality, we set $\nu=1$, $\eps=1$, $\lambda=1/4$, $\mathbb{A}=\mathbb{I} \in \mathbb{R}^{K \times K}$, because none of our theoretical results depend on the specific values of these parameters. 
Additionally, we convolve \cref{Eq:FokkerBeforeConv} with the kernel function $k$; this then eliminates the nonlocal time derivatives from the spatial derivative terms in the equation thanks to the property $k*\pt(\tilde k * \psi)=\psi$; see \cref{FracProp} below. The first term becomes $k*\pt \psi=\pt(k*[\psi-\psi_0])$, which would represent the Caputo derivative of order $\alpha \in (0,1)$ if one chooses $k=g_{1-\alpha}$. Furthermore, to obtain an equation in terms of $\hpsi$,  we recall that $\psi = M \hpsi$ and note that the Maxwellian $M$ is independent of $t$ and $x$. We shall therefore study in this work the system
\begin{equation}\label{Def:System}\begin{aligned}
    \pt u + \div(u \otimes u) + \nabla p - \Delta u - \div\,  \mathbb{S}(\hpsi) &=0 &&\text{ in } \Omega_T, \\
    \div\, u &= 0 &&\text{ in } \Omega_T,
    \\\pt(Mk*[\hpsi-\hpsi_0]) + \div(M \hpsi u) + \divq(M\hpsi(\nabla u)q)&~ &&\\
    - \Delta (M\hpsi) -  \divq (M \nablaq\hpsi) &=0 &&\text{ in } \O_T,
\end{aligned}\end{equation}
which is supplemented by the following initial and boundary conditions
\begin{equation}\label{Def:Data}\begin{aligned}
u(0)&=u_0 &&\text{ in } \Omega, \\
    (k*[\hpsi-\hpsi_0])(0)&=0 &&\text{ in } \O, \\
    u \cdot n_{\p\Omega} &=0 &&\text{ on } \partial \Omega \times (\I), \\
    M\nabla\hpsi \cdot n_{\p\Omega} &=0 &&\text{ on }\p\Omega \times \D \times (\I), \\
    (M\hpsi(\nabla u)q^j-M\nablaq \hpsi)\cdot n_{\p D_j} &=0 &&\text{ on }\Omega \times \p \D_j \times (\I), 
\end{aligned}\end{equation}
where $n_{\partial \Omega}$ is the unit outward normal vector to $\partial \Omega$ and $n_{\partial D_j}$ is the unit outward normal vector to $\partial D_j$, $j=1,\ldots,K$. In \eqref{Def:System}$_1$ we have used \eqref{Def:System}$_2$ to replace $\div(2\mathbb{D}(u))$ appearing in \eqref{Def:NS2} with $\Delta u$ exploiting the identity $\div (\mathbb{D}(v)) = \frac{1}{2} \left( \nabla(\div \,v) + \Delta v\right)$.
\begin{Remark}
In nonlocal models, one generally does \textit{not} obtain $\hpsi \in C([\I]; X)$ for some function space $X$ consisting of functions defined on $\O$, and therefore the initial value is not attained in the sense that $\hpsi(t) \to \hpsi_0$ in $X$ as $t\to 0$. Instead, we shall prove that
 $k*[\hpsi-\hpsi_0] \in C([\I];X)$ and the initial condition is satisfied in the sense that $(k*[\hpsi-\hpsi_0])(t) \to 0$ in $X$ as $t \to 0$. 
 \end{Remark}

\subsection{Function spaces} 
We use standard notation for Lebesgue, Sobolev and Bochner spaces. For any pair of functions $u \in L^p(O)$ and $v\in L^{p'}(O)$ with $O \subset \R^m$ being a measurable set and $p' \in [1,\infty]$ being the H\"older conjugate of $p\in[1,\infty]$, we shall write
$$(u,v)_O:=\int_O u(z)v(z) \dd z.$$
Since we work with Maxwellian-weighted spaces we define, for any nonnegative $w \in C(\overline{O})$ and any $p \in [1,\infty)$, the weighted Lebesgue and Sobolev spaces
$$\begin{aligned}
L^p_w(O)&:=\big\{u \in L^p_{\text{loc}}(O): \|u\|_{L^p_w(O)}^p:=(w,|u|^p)_O < \infty \big\}, \\
W^{1,p}_w(O)&:=\big\{u \in W^{1,p}_{\text{loc}}(O): \|u\|_{W^{1,p}_w(O)}^p:=(w,|\nabla u|^p+|u|^p)_O < \infty \big\}.
\end{aligned}$$
We refer to \cite{kufner1984define,gol2009weighted} for the theory of weighted function spaces.
Of particular interest to us will be the case when $O = \mathcal{O} (= \Omega \times D)$ and $w = M$, the Maxwellian \cref{Def:Maxwellian} associated with the model under consideration. 

For $p^*:=pm/(m-p)$,  $1\leq p < m$, and $O \subset \mathbb{R}^m$ a Lipschitz domain, thanks to the Sobolev embedding theorem we have the continuous embedding $W^{1,p}(O) \hookrightarrow L^{p^*}(O)$, and by the Rellich--Kondrashov theorem the compact embedding $W^{1,p}(O) \dhookrightarrow L^{r}(O)$ for any $r \in [1,p^*)$.
Lastly, we define the following spaces of divergence-free $d$-component vector functions for $p \in [1,\infty)$:
$$\begin{aligned} 
W^{1,p}_{0,\div}(\Omega)&:=\big\{v \in W^{1,p}_0(\Omega)^d: \div\,  v = 0 \text{ in } \Omega\big\}, \\
L^p_{0,\div}(\Omega)&:=\big\{v \in L^p(\Omega)^d:  \div\,v=0 \text{ on } \Omega, \, v\cdot n_{\p\Omega} = 0 \text{ on } \p\Omega\big\}.
\end{aligned}$$

\subsection{Nonlocal derivative} 
The $L^p((\I))$ norm of a function $u:(\I) \to \R$ is bounded by its convolution with a $\PC$ kernel $k$, which can be seen from the following Hardy--Littlewood--Sobolev inequality (recall that $k$ is, by definition, nonnegative and nonincreasing): 
\begin{equation}\begin{aligned} \|u\|_{L^p(0,t)}^p := \int_0^t |u(s)|^p \ds  &\leq \frac{1}{k(t)} \int_0^t k(t-s) |u(s)|^p \ds \leq \frac{1}{k(T)} \big(k * |u|^p\big)(t)
\end{aligned} 
\label{Eq:KernelNorm}
\end{equation}
for all $t \in (0,T]$.

Next, we assume that $H$ is a Hilbert space and consider the space of functions $u : (\I) \to H$ with nonlocal derivatives with respect to $t \in (\I)$ defined by
$$
\begin{aligned}
\W^{p}_{0}(\I;H)&:=\big\{u \in L^p(\I;H) : k*u \in W^{1,p}_0(\I;H)\big\}, \quad p \in [1,\infty).
\end{aligned}$$
We note that the function $k*u \in W^{1,p}(\I;H)$ has a well-defined trace at $t=0$, even when $u$ itself does not have one, thanks to the continuous embedding
$$k*u \in W^{1,p}(\I;H) \hookrightarrow AC([0,T];H).$$
Furthermore, for given data $u_0 \in H$, we define the “translated” space
$$\begin{aligned}
\W^{p}_{u_0}(\I;H)&:=\big\{u \in L^p(\I;H) : u-u_0 \in \W^{p}_0(\I;H)\big\}.
\end{aligned}$$

The following \textit{inverse convolution property} follows from the Sonine property of the $\PC$ pairing $(k,\tilde k)$; see \cite[Lemma 3.2]{fritz2024analysis}:
\begin{equation} \label{FracProp} \begin{aligned}
 \tilde k*\pt(k*u)=u \qquad  \forall\, u \in \W^{p}_{u_0}(\I;H).  
 \end{aligned}\end{equation} 

It is well known that the classical chain rule does not hold for fractional derivatives. However,  the so-called \textit{fundamental identity} holds. It is stated in eq. \eqref{eq:fund} and
 can be seen as the analogue of the chain rule $[G(u)]' = G'(u)u'$; cf. \cite[Lemma 6.1]{Kemppainen2016a}. Suppose that $T>0$ and let $U$ be an open subset of $\mathbb{R}$. Let $k \in W^{1,1}((0,T))$, $G \in C^1(U)$, and $u \in L^1(0,T; U)$. Suppose further that the functions $G(u(\cdot))$, $G'(u(\cdot))u(\cdot)$ and $G'(u(\cdot))(k' \ast u)(\cdot)$ belong to $L^1((0,T))$ (which is the case if $u \in L^\infty((0,T))$); then, we have the following identity, referred to as the fundamental identity:
\begin{align}\label{eq:fund}
\begin{aligned}
G'(u(t)) \frac{\dd}{\dd t}(k \ast u)(t) = \frac{\dd}{\dd t}(k \ast G(u))(t)
+ \left(-G(u(t)) + G'(u(t))u(t)\right) k(t)\\
+ \int_0^t \left(G(u(t-s)) - G(u(t)) - G'(u(t))[u(t-s) - u(t)]\right) [-k'(s)]\dd s.
\end{aligned}
\end{align}
If, in addition, $u_0 \in U$, and $G$ is convex, then
$$G'(u(t)) \ddt (k*[u-u_0])(t)\geq \ddt (k*[G(u)-G(u_0)])(t)$$
for almost all $t \in (\I)$; cf. Lemma 6.2 in \cite{Kemppainen2016a}. 
It is straightforward to extend this result to functions with space-time dependence. Suppose that our $\PC$ type kernel $k$ is also in $W^{1,1}((0,T))$, $G \in C^1(\mathbb{R}_{\geq 0};\mathbb{R}_{\geq 0} )$ is convex, $u_0 \in L^1(\mathcal{O}; \mathbb{R}_{\geq 0})$, $G(u_0) \in L^1(\mathcal{O}; \mathbb{R}_{\geq 0})$
$u \in L^1(\mathcal{O}_T;\mathbb{R}_{\geq 0})$, and $G(u(\cdot))$, $G'(u(\cdot))u(\cdot)$, $G'(u(\cdot))(k' \ast u)(\cdot)$ and $G'(u(\cdot))u_0 k$ belong to $L^1(\mathcal{O}_T;\mathbb{R}_{\geq 0}))$; then,
\begin{equation}\label{Eq:Chain2}\int_{\mathcal{O}} G'(u(t)) \ddt (k*[u-u_0])(t)\dd q \dd x \geq \ddt \int_{\mathcal{O}}(k*[G(u)-G(u_0)])(t)\dd q \dd x
\end{equation}
for almost all $t \in (\I)$.


A result of this type for Hilbert-space-valued functions $u : (0,T) \to H$ is stated in \cite[Lemma 3.1]{fritz2021subdiffusive} for the case $k=g_\alpha$. We note that the proof there can be adapted to a general $\PC$-kernel pair $(k,\tilde k)$ as it does not require any results other than the Sonine property. 
Specifically, in the case of $G(s)=\frac12 s^2$, this inequality is referred to as Alikhanov's inequality, and has the form (see \cite[Theorem 2.1]{vergara2008lyapunov})
\begin{equation} \label{Eq:Chain}  
(u,\pt( k*[u-u_0])_H \geq \frac12 \pt(k*[\|u\|^2_H-\|u_0\|^2_H]) \quad \forall\, u \in \W^{2}_{u_0}(\I;H).
\end{equation}

\subsection{A generalisation of the Aubin--Lions lemma with nonlocal derivatives}
Since the equations under consideration are nonlinear, the existence of a weak solution obtained via a Galerkin approximation requires compactness arguments to ensure the strong convergence of the approximating sequences. To this end, let $(X,Y,Z)$ be a triple of Banach spaces such that $X \dhookrightarrow Y \hookrightarrow Z$.  It follows from the Aubin--Lions compactness lemma (see \cite[Theorem II.5.16]{Boyer2013}) that
\begin{equation} \label{Eq:aubinclassic}
\begin{aligned}  
L^p(0,T;X) \cap W^{1,1}(0,T;Z) &\dhookrightarrow L^p(0,T;Y), &&p\in [1,\infty), \\
 L^\infty(0,T;X) \cap W^{1,r}(0,T;Z) &\dhookrightarrow C([0,T];Y), &&r>1.
\end{aligned}
\end{equation} 
In the following, we will require a compactness result with weaker assumptions on the regularity of the time derivative. Specifically, we require the following lemma.
\begin{Lemma} \label{Lem:Kalita}
Let $X \dhookrightarrow Y \hookrightarrow Z$ be real Banach spaces, $p \in [1,\infty)$ and $T>0$. If $\mathcal{G}$ is a bounded subset of $L^p(0,T;X) \cap
BV(0,T;Z)$, then $\mathcal{G}$ is relatively compact in $L^p(0,T;Y)$.
\end{Lemma}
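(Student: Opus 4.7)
The plan is to apply Simon's compactness theorem for subsets of $L^p(0,T;Y)$, which reduces the claim to verifying (i) the relative compactness in $Y$ of the set of time averages $\{\int_{t_1}^{t_2} u \dt : u \in \mathcal{G}\}$ for every $0 < t_1 < t_2 < T$, and (ii) the uniform equi-continuity of translations in $L^p(0,T-h;Y)$ as $h \to 0^+$. The essential tools will be Ehrling's inequality, available thanks to the compact embedding $X \dhookrightarrow Y$, and a translation estimate for Banach-space valued $BV$ functions. Note that $\mathcal{G}$ is automatically bounded in $L^p(0,T;Y)$ because $X \hookrightarrow Y$.

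Condition (i) is straightforward: by H\"older's inequality, $\|\int_{t_1}^{t_2} u\dt\|_X \leq (t_2 - t_1)^{1/p'} \|u\|_{L^p(0,T;X)}$, so the averages form a bounded subset of $X$ and hence a relatively compact subset of $Y$. For (ii), Ehrling's lemma produces, for every $\eta > 0$, a constant $C_\eta > 0$ such that
\[
\|u(\cdot+h) - u\|_{L^p(0,T-h;Y)} \leq 2\eta \|u\|_{L^p(0,T;X)} + C_\eta \|u(\cdot+h) - u\|_{L^p(0,T-h;Z)}.
\]
The first term is bounded by $2\eta M_1$ with $M_1 := \sup_{\mathcal{G}} \|u\|_{L^p(0,T;X)}$ and is made arbitrarily small by choosing $\eta$ small. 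For the second, I would exploit the $BV$ hypothesis in two complementary ways: on the one hand, the embedding $BV(0,T;Z) \hookrightarrow L^\infty(0,T;Z)$ yields the pointwise bound $\|u(t+h) - u(t)\|_Z \leq 2 M_2$ with $M_2 := \sup_{\mathcal{G}} \|u\|_{BV(0,T;Z)}$; on the other hand, a Fubini argument based on the scalar variation function $t \mapsto V_0^t(u)$ gives
\[
\int_0^{T-h} \|u(t+h)-u(t)\|_Z \dt \leq h\,V_0^T(u) \leq h M_2.
\]
Interpolating these two estimates produces
\[
\|u(\cdot+h) - u\|_{L^p(0,T-h;Z)} \leq (2 M_2)^{1-1/p} (h M_2)^{1/p},
\]
which tends to $0$ uniformly on $\mathcal{G}$ as $h \to 0^+$. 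Given $\varepsilon > 0$, fixing $\eta$ so that $2\eta M_1 < \varepsilon/2$ and then choosing $h$ sufficiently small so that the second term is less than $\varepsilon/2$ delivers the required uniform equi-continuity of translations.

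The main technical point, in my view, is the $L^1$-translation estimate for $Z$-valued $BV$ functions: this is classical in the scalar case but needs a short justification in the vector-valued setting. I would establish it by viewing $t \mapsto V_0^t(u)$ as a nondecreasing real-valued function of bounded variation whose associated Stieltjes measure $\dd V$ majorises the norm of the increments of $u$ in $Z$, i.e.\ $\|u(t+h)-u(t)\|_Z \leq V((t,t+h])$, and then applying Fubini to $\int_0^{T-h} V((t,t+h]) \dt$ to get the bound $h V_0^T(u)$. With this estimate, together with Ehrling's inequality and the integral compactness, Simon's theorem yields the conclusion directly.
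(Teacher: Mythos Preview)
Your proof is correct. The paper itself does not supply a proof of this lemma: it simply cites it as the special case $q=1$ of Proposition~2 in Kalita's paper, adding only the remark that the reflexivity hypothesis on $X$ assumed there is not actually used. Your direct argument via Simon's criterion is self-contained and, in fact, mirrors exactly the strategy the paper employs for its nonlocal generalisation (the lemma that immediately follows): relative compactness of time averages via boundedness in $X$, and equi-continuity of translations in $L^p(0,T;Y)$ obtained by combining Ehrling's inequality with a translation estimate in $Z$. Your interpolation between the $L^\infty$ and $L^1$ translation bounds to control the $L^p$ translation in $Z$ is a clean way to exploit the $BV$ hypothesis; the paper's nonlocal version is necessarily more involved because it must work with a convolution representation rather than a direct variation bound.
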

This result is a special case of  \cite[Proposition 2]{kalita2013convergence} with
$q=1$ there. Although the statement of Kalita’s Proposition~2 assumes
that $X$ is reflexive, a careful reading of the proof shows that this property is not actually used. We have, therefore, not assumed reflexivity in the above lemma.

When $Z$ is a dual Banach space, meaning that $Z$ admits a predual $Z'$ (that is, $Z = [Z']^*$), the space $BV(0,T;Z)$ can be identified with the space of functions $u \in L^1(0,T;Z)$ whose distributional time derivative $u_t$ is a $Z$-valued finite Radon measure on $(0,T]$, that is, $u_t \in \mathcal{M}(0,T;Z)$. The space $\mathcal{M}(0,T;Z)$ is endowed with the total variation norm
\[
\|v\|_{\mathcal{M}(0,T;Z)} := |v|(0,T],
\]
where $|v|$ is the total variation measure of $v$ defined for Borel sets $E\subseteq (0,T]$ by
\[
|v|(E) := \sup \left\{ \sum_{i=1}^n \|v(E_i)\|_Z \;:\;
E = \bigcup_{i=1}^n E_i,\; E_i \text{ are pairwise disjoint Borel sets} \right\}.
\]
In particular, when $Z$ has a predual, $Z'$, then $\mathcal{M}(0,T;Z)$ can be identified with the dual space $[C([0,T];Z')]^*$; see, e.g., \cite[Section~3.1]{ambrosio2000functions}. 
In our analysis, we will use \cref{Lem:Kalita} with functions $u \in L^1(0,T;Z)$ whose time derivative $u_t$ satisfies a bound of the form $|\langle u_t, \phi \rangle| \leq C \|\phi\|_{C([0,T];Z')}$; such a bound naturally implies that $u_t \in \mathcal{M}(0,T;Z)$, and therefore $u \in BV(0,T;Z)$.

There exist several nonlocal counterparts of the Aubin–Lions lemma in the literature, formulated under various assumptions on the spaces and kernels; see, for instance, \cite{Wittbold2020a,meliani2023unified,li2018some}. In particular, \cite[Theorem~3.2]{Wittbold2020a} establishes the compact embedding
\[
\big\{u \in L^p(0,T;X): \partial_t(k*(u-u_0)) \in L^p(0,T;Z) \big\} \dhookrightarrow L^p(0,T;Y), \quad p\in [1,\infty),
\]
for a Hilbert triple with $Z=X'$, and the proof readily extends to Banach spaces. We emphasize that, in this setting, the nonlocal (or fractional) time derivative must be bounded in an $L^p$-space with the same exponent $p$ as the bound for $u$. 
This requirement marks a key difference from the classical Aubin--Lions lemma, where an $L^1$-bound on the time derivative suffices.
In other words, compactness for the nonlocal case comes at the cost of stronger regularity in time, reflecting the weaker smoothing effect of the fractional derivative.

In what follows, we show, in analogy with \cref{Lem:Kalita}, that a bound of the nonlocal derivative in the measure space $\mathcal{M}(0,T;Z)$ is already sufficient to guarantee compactness. We note that the new assumption $\tilde k \in L^p(0,T)$ may impose a restriction on the fractional exponent in the case of Abel-type kernels. However, in our applications we will take $p=1$, in which case the condition $\tilde k \in L^1(0,T)$ is easily satisfied for Abel kernels.

\begin{Lemma}\label{thm:nonlocal_aubin_measures}
Let $X \dhookrightarrow Y \hookrightarrow Z$ be real Banach spaces where $Z$ is a dual Banach space.
Let $1 \le p < \infty$, $T > 0 $, and let $(k,\tilde k) \in \mathcal{PC}$ with $\tilde k \in L^p(0,T)$.
For $u_0 \in Z$, define
\[
\mathcal{F} = \left\{ u \in L^p(0,T;X) : \partial_t (k * (u - u_0)) \in \mathcal{M}(0,T;Z) \right\}\!.
\]
If $\mathcal{F}$ is bounded in $L^p(0,T;X)$ and
\[
M := \sup_{u \in \mathcal{F}} \left\| \partial_t (k * (u - u_0)) \right\|_{\mathcal{M}(0,T;Z)} < \infty,
\]
then $\mathcal{F}$ is relatively compact in $L^p(0,T;Y)$.
\end{Lemma}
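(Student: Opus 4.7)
The plan is to deduce the result from Simon's classical shift-based compactness criterion: a bounded family $\mathcal{G}\subset L^p(0,T;Y)$ is relatively compact in $L^p(0,T;Y)$ provided (i) $\{\int_{t_1}^{t_2} u\,\d t : u\in\mathcal{G}\}$ is relatively compact in $Y$ for every $0\le t_1<t_2\le T$, and (ii) $\sup_{u\in\mathcal{G}}\|u(\cdot+h)-u\|_{L^p(0,T-h;Y)}\to 0$ as $h\to 0^+$. Condition (i) is immediate for $\mathcal{F}$: by H\"older's inequality the integrals are uniformly bounded in $X$, and the compact embedding $X\dhookrightarrow Y$ delivers relative compactness in $Y$.

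For (ii), the first reduction is via Ehrling's inequality, which (since $X\dhookrightarrow Y\hookrightarrow Z$) provides, for every $\eta>0$, a constant $C_\eta>0$ such that $\|v\|_Y\le\eta\|v\|_X+C_\eta\|v\|_Z$ for all $v\in X$. Applied to $v=u(\cdot+h)-u$ and raised to the $p$th power before integrating, this reduces (ii) to proving that $\sup_{u\in\mathcal{F}}\|u(\cdot+h)-u\|_{L^p(0,T-h;Z)}\to 0$ as $h\to 0^+$, since the $X$-contribution is uniformly $O(\eta)$ and $\eta$ is arbitrarily small.

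To estimate shifts in $Z$, I would exploit the Sonine property. Setting $V:=k*(u-u_0)$ and $w:=\partial_t V\in\mathcal{M}(0,T;Z)$ with $|w|(0,T]\le M$, the identity $\tilde k*k=1$ together with $V(0)=0$ yields $\tilde k*V=1*(u-u_0)$, and differentiating gives the representation
\begin{equation*}
u(t)-u_0=\int_{(0,t]}\tilde k(t-s)\,\d w(s)\quad\text{for a.e. }t\in(0,T),
\end{equation*}
understood as a $Z$-valued convolution against the vector measure $w$ (this is where the assumption that $Z$ admits a predual enters, via the identification $\mathcal{M}(0,T;Z)\simeq[C([0,T];Z')]^*$, together with $\tilde k\in L^p\subset L^1$). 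Splitting
\begin{equation*}
u(t)-u(t-h)=\int_{(t-h,t]}\tilde k(t-s)\,\d w(s)+\int_{(0,t-h]}\bigl[\tilde k(t-s)-\tilde k(t-h-s)\bigr]\,\d w(s),
\end{equation*}
bounding the $Z$-norm of each piece by the scalar integral against $|w|$, and then applying Minkowski's integral inequality to pull the outer $L^p_t$-norm inside the $|w|$-integration, a routine Fubini/change-of-variables computation delivers
\begin{equation*}
\|u(\cdot+h)-u\|_{L^p(0,T-h;Z)}\le M\bigl(\|\tilde k\|_{L^p(0,h)}+\|\tilde k(\cdot+h)-\tilde k\|_{L^p(\mathbb{R})}\bigr),
\end{equation*}
with $\tilde k$ extended by zero outside $[0,T]$. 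The right-hand side vanishes as $h\to 0^+$ by the absolute continuity of the $L^p$-integral and by continuity of translations in $L^p$, respectively, and both bounds are uniform in $u\in\mathcal{F}$.

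Combining the Ehrling reduction with the preceding $Z$-shift estimate verifies (ii), and Simon's theorem then yields relative compactness of $\mathcal{F}$ in $L^p(0,T;Y)$. The main technical hurdle I anticipate is the rigorous interpretation of the convolution $\tilde k*\d w$ against a $Z$-valued Radon measure $w$ and the attendant representation formula for $u-u_0$; everything else is bookkeeping. It is precisely the duality structure of $Z$, assumed in the statement, that makes this measure-valued convolution well-defined and justifies the subsequent Minkowski/Fubini manipulations.
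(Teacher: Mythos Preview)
Your proposal is correct and follows essentially the same route as the paper: Simon's compactness criterion, the Sonine-based representation $u-u_0=\tilde k*\d w$, the same two-term splitting of the time shift, Minkowski's inequality to obtain the bound $M\bigl(\|\tilde k\|_{L^p(0,h)}+\|\tilde k(\cdot+h)-\tilde k\|_{L^p}\bigr)$, and the Ehrling/Aubin--Lions interpolation to pass from $Z$ to $Y$. The only cosmetic difference is that you apply Ehrling's inequality before the $Z$-shift estimate whereas the paper applies it afterwards; this is immaterial.
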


\begin{proof}
For any $u \in \mathcal{F}$, define $v = \partial_t (k * (u - u_0)) \in \mathcal{M}(0,T;Z)$, and let $|v|$ be the total variation measure of $v$, so that $|v|(0,T] \le M$. 
By the definition of convolution with a Radon measure (see \cite[Definition 2.1]{ambrosio2000functions}) and the inverse convolution property \cref{FracProp}, we have
\[
u(t) - u_0 = (\tilde k * v)(t) = \int_0^t \tilde k(t-s) \, \dd v(s), \quad t \in [0,T],
\]
where the integral is understood as a Bochner integral with respect to the vector measure $v$.
We will show that $\mathcal{F}$ is relatively compact in $L^p(0,T;Y)$ by verifying the conditions of the Fr\'echet--Kolmogorov theorem \cite[Theorem~1]{simon1986compact}, which are the following:\\  
(i) for every $0 < t_1 < t_2 < T$, the set
\[
G(t_1,t_2) = \left\{ \int_{t_1}^{t_2} u(t) \, \mathrm{d}t : u \in \mathcal{F} \right\}
\]
is relatively compact in $Y$, and\\  
(ii) $\mathcal{F}$ is strongly integrally equicontinuous in $L^p(0,T;Y)$, i.e.,
\[
\lim_{h \to 0} \sup_{u \in \mathcal{F}} \int_0^{T-h} \|u(t+h) - u(t)\|_Y^p \, \mathrm{d}t = 0.
\]

\noindent\textit{Step (i).} For any $u \in \mathcal{F}$, we have
\[
\left\| \int_{t_1}^{t_2} u(t) \, \mathrm{d}t \right\|_X 
\le \int_{t_1}^{t_2} \|u(t)\|_X \, \mathrm{d}t 
\le (t_2 - t_1)^{1 - \frac{1}{p}} \|u\|_{L^p(0,T;X)}.
\]
Thus $G(t_1,t_2)$ is bounded in $X$, and the compact embedding $X \dhookrightarrow Y$ implies its relative compactness in $Y$. \smallskip

\noindent\textit{Step (ii).}
For $h > 0$, the difference $u(t+h) - u(t)$ decomposes as
\begin{equation}\label{Eq:Decomp}
\begin{aligned}
u(t+h) - u(t)
&= \int_0^t [\tilde k(t+h-s) - \tilde k(t-s)] \, \dd v(s) + \int_t^{t+h} \tilde k(t+h-s) \, \dd v(s) \\
&=:(\Delta_h \tilde k * v)(t)+B_h(t),
\end{aligned}
\end{equation}
where $\Delta_h \tilde k(t) := \tilde k(t+h) - \tilde k(t)$ (with $\tilde k(t) = 0$ for $t < 0$).
By Minkowski’s integral inequality and the definition of total variation, we can bound the first term  in \cref{Eq:Decomp} as
\begin{equation}\label{Eq:Decomp1}
\begin{aligned}
\|\Delta_h \tilde k * v\|_{L^p(0,T-h;Z)}
&\le \int_0^T \|\Delta_h \tilde k(\cdot - s)\|_{L^p(0,T)} \, \dd |v|(s)
\\ &= \|\Delta_h \tilde k\|_{L^p(0,T)} \, |v|(0,T)
\\ &\le M \|\Delta_h \tilde k\|_{L^p(0,T)} \xrightarrow[h\to0]{} 0.
\end{aligned}
\end{equation}
Next, we consider the second term in \cref{Eq:Decomp}. 
By Jensen’s inequality for integrals with respect to a finite measure,
$$
|B_h(t)|^p
\le |v|(t,t+h]^{\,p-1}\int_t^{t+h}|\tilde k(t+h-s)|^p\,\dd|v|(s).
$$
Integrating over $t\in(0,T-h)$ and using Tonelli’s theorem yields
$$
\begin{aligned}
\|B_h\|_{L^p(0,T-h;Z)}^p
&\le M^{p-1}\!\int_0^{T-h}\!\!\int_t^{t+h}|\tilde k(t+h-s)|^p\,\dd|v|(s)\,\dd t \\
&= M^{p-1}\!\int_0^T\!\int_{(s-h)_+}^{\min(s,T-h)}|\tilde k(t+h-s)|^p\,\dd t\,\dd|v|(s).
\end{aligned}
$$
For fixed $s$, performing the change of variables $r=t+h-s$ gives
$$
\int_{(s-h)_+}^{\min(s,T-h)}|\tilde k(t+h-s)|^p\,\dd t
\le \int_0^h |\tilde k(r)|^p\,\dd r.
$$
Hence
\begin{equation}\label{Eq:Decomp2}
\|B_h\|_{L^p(0,T-h;Z)}^p
\le M^{p-1}|v|(0,T)\int_0^h |\tilde k(r)|^p\,\dd r
\le M^p\|\tilde k\|_{L^p(0,h)}^p \xrightarrow[h\to0]{} 0.
\end{equation}
Combining the estimates \cref{Eq:Decomp1} and \cref{Eq:Decomp2}, and returning to \cref{Eq:Decomp}, we have
\[
\sup_{u \in \mathcal{F}} \int_0^{T-h} \|u(t+h) - u(t)\|_Z^p \, \mathrm{d}t
\le M^p \left( \|\Delta_h \tilde k\|_{L^p(0,T)}^p + \|\tilde k\|_{L^p(0,h)}^p \right)
\xrightarrow[h\to0]{} 0.
\]
Thus $\mathcal{F}$ is strongly integrally equicontinuous in $L^p(0,T;Z)$. Finally, applying the so called Ehrling lemma \cite[Lemma~7.6]{roubicek2013nonlinear} (whose proof, as has been pointed out in the recent paper \cite{Olsen-Holden} by Hanche-Olsen \& Holden, is in fact due to J.-L. Lions \cite[Prop. 4.1, p. 59]{MR153974} and \cite[Lemma 5.1, p. 58]{MR259693}; for this reason, Hanche-Olsen \& Holden suggest `Aubin--Lions lemma' instead of the misnomer `Ehrling's lemma') for any $\varepsilon > 0$ there exists a $C_\varepsilon > 0$ such that
\[
\|x\|_Y^p \le \varepsilon \|x\|_X^p + C_\varepsilon \|x\|_Z^p \quad \forall x \in X.
\]
Hence, for any $u \in \mathcal{F}$ and $h > 0$,
\begin{align*}
\int_0^{T-h} \|u(t+h) - u(t)\|_Y^p \, \mathrm{d}t
&\le \varepsilon \int_0^{T-h} \|u(t+h) - u(t)\|_X^p \, \mathrm{d}t \\
&\quad + C_\varepsilon \int_0^{T-h} \|u(t+h) - u(t)\|_Z^p \, \mathrm{d}t.
\end{align*}
The first term is bounded by $\varepsilon C$ because $\mathcal{F}$ is bounded in $L^p(0,T;X)$, while the second term tends to $0$ uniformly in $u \in \mathcal{F}$ as $h \to 0$. Therefore,
\[
\lim_{h \to 0} \sup_{u \in \mathcal{F}} \int_0^{T-h} \|u(t+h) - u(t)\|_Y^p \, \mathrm{d}t = 0.
\]
This proves the required strong integral equicontinuity in $L^p(0,T;Y)$.
By the Fr\'echet--Kolmogorov theorem, $\mathcal{F}$ is therefore relatively compact in $L^p(0,T;Y)$.
\end{proof}
    \section{Analysis} \label{Sec:Analysis}

Before we state our main result on the existence of weak solutions to the nonlocal Navier--Stokes--Fokker--Planck system, we first specify the definition of a weak solution.

\begin{Definition} \label{Def:Weak}
    We call a tuple $(u,\hpsi)$ a weak solution to the nonlocal Navier--Stokes--Fokker--Planck system \eqref{Def:System}, \eqref{Def:Data} provided that 
    $$\begin{aligned}
        u &\in C_w([\I];L^2_{0,\div  }(\Omega)) \cap L^2(\I;W^{1,2}_{0,\div  }(\Omega)) \cap W^{1,4/d}(\I;[W^{1,2}_{0,\div  }(\Omega)]^*), \\
        \mathbb{S}(\hpsi) &\in L^2(\I;L^2(\Omega)^{d\times d}), \\
        \hpsi &\in L^\infty(\Omega_T;L^1_M(\D))  
        ~  \text{ with } \hpsi \geq 0 \text{ a.e. in } \O_T, \\
        \nabla \hpsi &\in L^2(\Omega_T;L^{1}_M(\D)^{d(K+1)}), \\
        Mk*\hpsi &\in L^1(0,T;L^1(\mathcal{O}))\cap BV([\I];[W^{1,\infty}(\O)]^*),\\
        \hpsi \ln\hpsi &\in L^1_M(\O_T),
    \end{aligned}$$
    satisfying
    \begin{equation}\label{Def:SysVar}\begin{aligned}
        &\langle \pt u,w \rangle -(u \otimes u,\nabla w)_{\Omega_T} \\
        &\quad +  (\nabla u,\nabla w)_{\Omega_T}  +(\mathbb{S}(\hpsi),\nabla w)_{\Omega_T}-(f,w)_{\Omega_T}=0, \\
        & \langle \pt(Mk*[\hpsi-\hpsi_0]),\zeta \rangle + (M \nabla \hpsi,\nabla\zeta)_{\O_T}+ (M\nablaq \hpsi,\nablaq \zeta)_{\O_T}& \\    
        &\quad - (Mu\hpsi,\nabla\zeta)_{\O_T}-  (M\hpsi (\nabla u) q,\nablaq \zeta)_{\O_T} =0, 
    \end{aligned}\end{equation}
    with $\mathbb{S}(\hpsi) = \sum_{j=1}^K \int_\D M(q) \nablaqj \hpsi \otimes q_j \dq$,
    for any $w \in L^{4/(4-d)}(\I;W^{1,2}_{0,\div  }(\Omega))$ and $\zeta \in C ([\I];W^{1,\infty}(\O))$, and the initial data is attained in the sense 
    $$\lim_{t \to 0} (u(t)-u_0,w)_\Omega + (M(k*[\hpsi-\hpsi_0])(t),\zeta)_\O=0 \quad \forall w \in L^2_{0,\div  }(\Omega), \zeta \in L^\infty(\O).$$
\end{Definition}

We make the following assumptions on the model parameters and the functions that we require in our proof of the existence of weak solutions.

\begin{Assumption} \label{Assumptions}
    We assume the following:
    \begin{itemize} \itemsep.2em
        \item $K \in \N$ arbitrary, $\D^j \subset \R^d$, $d \in \{2,3\}$ and $j=1,\dots,K$, bounded open balls centered at the origin;
        \item $\Omega \subset \R^d$ bounded Lipschitz domain;
        \item $(k,\tilde k) \in \PC$ with $k,\tilde k \in L^1(\I)$;
        \item $f \in L^2(\I;L^2(\Omega)^d)$;
        \item $M \in C^1_0(\overline{D})$
        with $M > 0$ on $D$; 
        \item $u_0 \in L_{0,\div  }^2(\Omega)$;
        \item $\hpsi_0 \geq 0$ a.e. in $\O$ with $\hpsi_0 \ln \hpsi_0 \in L^1_M(\O)$ and $\rho_0 :=\int_\D M(q) \hpsi_0(\cdot,q) \dq \in L^\infty(\Omega)$.
    \end{itemize}
\end{Assumption}

The main result of this work regarding the existence of weak solutions to the nonlocal model \eqref{Def:System} is the following. 

\begin{Theorem} \label{Thm:Main}
    Let Assumption \ref{Assumptions} hold. Then, there exists at least one weak solution $(u,\hpsi)$ to the system \eqref{Def:System}, \eqref{Def:Data} in the sense of \Cref{Def:Weak}. 
\end{Theorem}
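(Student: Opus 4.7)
The plan is a spatial Galerkin approximation combined with a logarithmic relative-entropy estimate in the spirit of Barrett--S\"uli, carried out in the nonlocal-in-time setting by means of the fundamental identity \eqref{eq:fund} and the new compactness result \cref{thm:nonlocal_aubin_measures}. First I would choose finite-dimensional spaces $V_n \subset W^{1,2}_{0,\div}(\Omega)$ and $Z_n \subset W^{1,\infty}(\O)$, spanned by eigenfunctions of the Stokes operator on $\Omega$ and of a Maxwellian-weighted elliptic operator on $\O$ (with the natural boundary conditions in \eqref{Def:Data}) respectively, and look for approximations $(u^n,\hpsi^n)\in V_n \times Z_n$ satisfying the Galerkin projection of \eqref{Def:SysVar}. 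Because of the nonlocal time derivative, the Galerkin $\hpsi$-equation is an affine Volterra system rather than an ODE; solvability on a short interval follows from the resolvent identity \eqref{FracProp}, which recasts it as an integral equation to which the Banach fixed-point theorem applies, and the interval is then extended to $[\I]$ on the strength of the a priori bounds below. To guarantee nonnegativity of $\hpsi^n$ and to control the drag term at the discrete level, I would replace $\hpsi^n$ in the coupling terms by its positive part $[\hpsi^n]_+$, a truncation to be removed in the limit $n\to\infty$.

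The central estimate is obtained by testing the momentum equation with $u^n$ and the Fokker--Planck equation with $1+\ln(\hpsi^n+\eta)$, where $\eta \downarrow 0$ will be sent to zero in the limit. The chain-rule inequality \eqref{Eq:Chain2} applied to the convex entropy $G(s)=(s+\eta)\ln(s+\eta)-s$ produces a nonlocal-in-time bound for $(k\ast[G(\hpsi^n)-G(\hpsi_0)],M)_{\O}$. The essential cancellation is that, after integration by parts in $q$, the extra-stress contribution $(\mathbb{S}(\hpsi^n),\nabla u^n)_{\Omega}$ in the kinetic-energy identity exactly balances the drag term $-(M\hpsi^n(\nabla u^n)q,\nablaq\ln\hpsi^n)_{\O}$ in the entropy identity (up to a remainder absorbed by the Fisher-information term $\int_{\O} M|\nablaq\sqrt{\hpsi^n}|^2\dxq$ via Young's inequality). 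Convolving with $k$ and invoking the Hardy--Littlewood--Sobolev bound \eqref{Eq:KernelNorm} then yields uniform estimates in all the function spaces listed in \cref{Def:Weak}, with the gradient bound $\nabla\hpsi^n \in L^2(\I;L^1_M(\O))$ following from the identity $|\nabla\hpsi^n|=2\sqrt{\hpsi^n}|\nabla\sqrt{\hpsi^n}|$ and Cauchy--Schwarz, and with the Kramers expression recovered at the limit using the $q$-integration by parts in \eqref{Def:TensorS}.

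For strong convergence, the classical Aubin--Lions lemma \eqref{Eq:aubinclassic} handles $u^n$, since $\pt u^n \in L^{4/d}(\I;[W^{1,2}_{0,\div}(\Omega)]^*)$ by duality (the worst term, $\div(u^n\otimes u^n)$, is controlled by Ladyzhenskaya's interpolation). For $\hpsi^n$, I would apply \cref{thm:nonlocal_aubin_measures} with $p=1$: reading the Galerkin Fokker--Planck equation term by term, every contribution on the right-hand side is bounded in $L^1(\O_T)$ by the entropy estimate, so $\pt(Mk\ast[\hpsi^n-\hpsi_0])$ lies in a bounded set of $\mathcal{M}(0,T;[W^{1,\infty}(\O)]^*)$, which together with the $L^1(\I;W^{1,1}_M(\O))$-bound yields strong convergence of $\hpsi^n$ in $L^1(\I;L^1_M(\O))$ and almost-everywhere convergence along a subsequence. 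Nonnegativity of the limit is inherited from that of $[\hpsi^n]_+$ combined with the entropy bound, which rules out nontrivial negative mass in the limit $\eta\to 0$. The main obstacle lies in passing to the limit in the coupling terms $(\mathbb{S}(\hpsi^n),\nabla w)_{\Omega_T}$ and $(M\hpsi^n(\nabla u^n)q,\nablaq\zeta)_{\O_T}$, where $\nabla u^n \rightharpoonup \nabla u$ only weakly in $L^2(\I;L^2(\Omega))$; this is resolved by upgrading the $L^1(\I;L^1_M(\O))$-convergence of $M\hpsi^n$ to strong $L^r(\I;L^r_M(\O))$-convergence for some $r>1$ via Vitali's theorem and the equi-integrability supplied by the entropy, after which weak-strong pairing closes both terms. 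The initial condition is recovered from the $W^{1,1}_t$-regularity of $k\ast[\hpsi^n-\hpsi_0]$ together with its vanishing at $t=0$ by construction.
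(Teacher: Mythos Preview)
Your overall strategy---entropy estimate plus the nonlocal compactness lemma---matches the paper's, but the single-level Galerkin scheme you describe cannot be closed as written. The core difficulty is that $1+\ln(\hpsi^n+\eta)$ is \emph{not} an element of the finite-dimensional test space $Z_n$, so you are not permitted to insert it into the Galerkin Fokker--Planck equation; the same obstruction prevents testing with $-[-\hpsi^n]_+$, which is what one would need to establish nonnegativity of $\hpsi^n$. Replacing $\hpsi^n$ by $[\hpsi^n]_+$ in the coupling terms does not cure this: it neither forces $\hpsi^n\geq 0$ nor makes the logarithm admissible. Consequently you have no route to the entropy estimate at the discrete level, and without it there is no $n$-uniform bound and no limit passage.

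The paper resolves this by a three-parameter approximation $(\ell,m,n)$. First, an $\ell$-truncation replaces $\hpsi$ in the drag term by $\Lambda_\ell(\hpsi)$ and the Kramers expression by $\mathbb{S}_\ell=\mathbb{S}(T_\ell(\cdot))$; these matched cutoffs make the drag term $L^\infty$-bounded and allow a plain $L^2$-test (with $\hpsi^{m,n}$ itself) to yield $n$-uniform bounds and passage $n\to\infty$. Second, the Maxwellian is regularised to $M^m=M+1/m$, so that $L^2_{M^m}(\O)\simeq L^2(\O)$ and $W^{1,2}$ test functions are available after the first limit. Only at this semi-discrete $(\ell,m)$-level are $-[-\hpsi^m]_+$ and $G_\delta'(\hpsi^m)=1+\ln(\hpsi^m+\delta)$ legitimate test functions; nonnegativity is proved first, then the entropy estimate. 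Two further points: the stress/drag cancellation is \emph{exact} (there is no remainder to absorb), and because the Navier--Stokes energy identity is local in time while the Fokker--Planck entropy inequality carries a $\pt(k\ast\cdot)$, the two must be \emph{integrated} over $(0,t)$ and added---one does not ``convolve with $k$''---which is why the paper obtains only $G(\hpsi)\in L^1_M(\O_T)$ rather than $L^\infty(0,T;L^1_M(\O))$.
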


Before we begin to prove the theorem, we first comment on the existence of a pressure. We consider divergence-free test functions in the variational form for the Navier--Stokes equations, and thus the pressure is eliminated from the variational form. However, after having proved the existence of a solution $(u,\hpsi)$ as stated in \cref{Thm:Main}, we can associate a pressure with the velocity; see \cite[Ch.~V.1.5]{Boyer2013}. 

\begin{Theorem}\label{Thm:Main2}
    There exists a pressure $p \in W^{-1,\infty}(0,T;L_0^2(\Omega))$, such that $(u,p,\hpsi)$ solves the nonlocal model \eqref{Def:System} in distributional sense.
\end{Theorem}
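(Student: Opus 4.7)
The plan is to use de Rham's theorem in the form adapted for evolutionary problems, following the approach of Boyer--Fabrie \cite[Ch.~V.1.5]{Boyer2013}. Because $\partial_t u$ only lives in $L^{4/d}(\I; [W^{1,2}_{0,\div}(\Omega)]^*)$, a direct application of de Rham to the momentum equation is awkward; the standard device is to integrate in time first, extract a primitive pressure $P$ by a pointwise-in-time de Rham argument, and then recover $p$ as the distributional derivative $\partial_t P$.

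Concretely, for $t \in [\I]$, I would define the functional
\[
F(t) := u(t) - u_0 - \int_0^t \bigl[\Delta u(s) - \div(u \otimes u)(s) - \div\, \mathbb{S}(\hpsi)(s) + f(s)\bigr] \dd s,
\]
interpreted in $[W^{1,2}_0(\Omega)]^*$. Using the regularities listed in \cref{Def:Weak}, together with the Ladyzhenskaya-type interpolation bound $u \otimes u \in L^{5/3}(\Omega_T)^{d\times d}$ for $d=3$ (respectively $L^2(\Omega_T)^{d \times d}$ for $d=2$), each summand inside the time integral belongs to $L^1(\I; [W^{1,2}_0(\Omega)]^*)$, so that $F \in C([\I]; [W^{1,2}_0(\Omega)]^*) \cap L^\infty(\I; [W^{1,2}_0(\Omega)]^*)$. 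Testing the weak momentum equation \cref{Def:SysVar}$_1$ against a time-independent divergence-free vector field $w \in W^{1,2}_{0,\div}(\Omega)$ and integrating in time from $0$ to $t$ yields $\langle F(t),w\rangle_\Omega = 0$ for every such $w$.

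By de Rham's theorem (equivalently, the Ne\v{c}as inequality $\|P\|_{L^2_0(\Omega)} \le C \|\nabla P\|_{[W^{1,2}_0(\Omega)]^*}$) applied pointwise in $t$, there is a unique $P(t) \in L^2_0(\Omega)$ with $\nabla P(t) = F(t)$. The pointwise Ne\v{c}as inequality upgrades the $L^\infty(\I; [W^{1,2}_0(\Omega)]^*)$ bound on $F$ to $P \in L^\infty(0,T; L^2_0(\Omega))$. Setting $p := \partial_t P$ in the sense of distributions in $t$, the resulting $p$ lies in $W^{-1,\infty}(0,T; L^2_0(\Omega))$, and differentiating the identity $\nabla P = F$ distributionally in $t$ recovers exactly the momentum equation \cref{Def:System}$_1$ in the distributional sense on $\Omega_T$; the incompressibility constraint, the Fokker--Planck equation, and the initial/boundary conditions are inherited unchanged from \cref{Thm:Main}.

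The main technical obstacle, and the reason for the weak time regularity $W^{-1,\infty}$ in the conclusion, is precisely the low regularity of $\partial_t u$. Were $\partial_t u$ in $L^1(\I; [W^{1,2}_0(\Omega)]^*)$, one could apply de Rham at each fixed $t$ directly to the momentum equation and obtain $p \in L^1(\I; L^2_0(\Omega))$; in the present setting, passing through the time-integrated formulation is the standard workaround, at the cost of a distributional time derivative in the pressure space.
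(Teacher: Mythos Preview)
Your proposal is correct and follows essentially the same approach as the paper: both invoke de Rham's theorem in the Boyer--Fabrie framework, handling the low time regularity of $\partial_t u$ by passing through a time-primitive. The paper phrases this more abstractly---it defines $q:=\partial_t u+\div(u\otimes u)-\Delta u-\div\,\mathbb{S}(\hpsi)-f$ directly as an element of $W^{-1,\infty}(0,T;W^{-1,2}(\Omega)^d):=\mathcal{L}(W_0^{1,1}(0,T);W^{-1,2}(\Omega)^d)$, checks $\langle q(\eta),w\rangle=0$ for $\eta\in W_0^{1,1}(0,T)$ and $w\in W^{1,2}_{0,\div}(\Omega)$ (note the integration by parts producing $-u\eta'$, which is exactly your time-primitive step), and then applies de Rham in that space---whereas you unpack the same mechanism explicitly via $F(t)$ and $P(t)$; the content is identical.
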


\begin{proof}
\cref{Thm:Main} guarantees the existence of a weak solution $(u,\hpsi)$. We then define
$q:=\pt u + \div  (u \otimes u) - \nu\Delta u - \div\, \mathbb{S}(\hpsi)-f,$
and from the regularity of the solution, as stated in \cref{Def:Weak}, it directly follows that $$q \in W^{-1,\infty}(\I;W^{-1,2}(\Omega)^d):=\mathcal{L}(W_0^{1,1}(\I);W^{-1,2}(\Omega)^d).$$ Further, for any $\eta \in W_0^{1,1}(\I)$ and $w \in W^{1,2}_{0,\div  }(\Omega)$, we have that
$$\begin{aligned}
&\langle q(\eta),w \rangle_{W^{-1,2}(\Omega) \times W^{1,2}_0(\Omega)} \\ &=\bigg\langle \int_0^T \big( -u\eta' + \div  (u \otimes u) \eta - \nu\Delta u \eta - \div\, \mathbb{S}(\hpsi) \eta -f\eta \big) \dt,w \bigg\rangle_{W^{-1,2}(\Omega) \times W^{1,2}_0(\Omega)}\\
&=\int_0^T \Big(\ddt (u,w)_{\Omega} - (u \otimes u,\nabla w)_\Omega  + (\nabla u,\nabla w) + (\mathbb{S}(\hpsi),\nabla w)_\Omega\\
& \qquad - (f,w)_{\Omega}  \Big) \eta(t) \dt \\
&=0.
\end{aligned}$$ By the de Rham lemma, see \cite[Theorem IV.2.3]{Boyer2013}, there exists a unique $p \in W^{-1,\infty}(\I;L_0^2(\Omega))$ such that $\nabla p =-q$.
\end{proof}

Further, we prove a uniqueness result for weak solutions to the nonlocal micro-macro model in the sense of \cref{Def:Weak} that further fulfil some regularity assumptions. The assumed regularity aligns with the Escauriaza--Seregin--{\v{S}}ver\'{a}k condition \cite{escauriaza2003l_3} for the backward uniqueness for the Navier--Stokes equations, ensuring smoothness of strong solutions. Furthermore, we require that the radius of each $D_j$, $j \in \{1,\dots,K\}$, in the configuration space is sufficiently small. This is a justified assumption for the microscopic setting. However, we note that the smallness of the radii can be heavily relaxed if a small final time $T$ is assumed. In fact, we will see in the proof that the radii will be subtracted from $\tilde k(T)$, and $\tilde k$ is nonincreasing with $\lim_{t \to 0}\tilde k(t)=\infty$. 
\begin{Theorem} \label{Thm:WeakStrong}
    Let Assumption \ref{Assumptions} hold and further let the volume of $D$  
    be sufficiently small. Every weak solution $(u,\hpsi)$ with regularity $u\in L^\infty(0,T;W^{1,3}_0(\Omega)^d)$ and $\hpsi \in L^\infty(0,T;W^{1,\infty}(\O))$ is unique.
\end{Theorem}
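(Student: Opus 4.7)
I would follow a classical weak--strong uniqueness strategy, adapted to the nonlocal time derivative. Let $(\bar u,\bar\hpsi)$ denote the weak solution carrying the enhanced regularity $\bar u\in L^\infty(\I;W^{1,3}_0(\Omega)^d)$ and $\bar\hpsi\in L^\infty(\I;W^{1,\infty}(\O))$, let $(u,\hpsi)$ be any other weak solution with the same initial data, and set $U:=u-\bar u$ and $\Psi:=\hpsi-\bar\hpsi$, so that $U(0)=0$ and $(k*\Psi)(0)=0$. Subtracting the two weak formulations \cref{Def:SysVar} yields a coupled pair of evolution equations for $(U,\Psi)$; since $\Psi\in L^\infty(\I;W^{1,\infty}(\O))$, it is an admissible test function for the Fokker--Planck equation, while $U$ is admissible for the Navier--Stokes equation.

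For the Navier--Stokes part I would test with $U$, decompose $u\otimes u-\bar u\otimes\bar u=\bar u\otimes U+U\otimes\bar u+U\otimes U$, and exploit $\div U=\div\bar u=0$ to eliminate all but the $((U\cdot\nabla)\bar u,U)_\Omega$ contribution. This is bounded by $\varepsilon\|\nabla U\|^2+C\|\nabla\bar u\|_{L^3}^2\|U\|_{L^2}^2$ via Gagliardo--Nirenberg interpolation in $d\in\{2,3\}$; the hypothesis $\bar u\in L^\infty(W^{1,3}_0)$ enters here in the critical Escauriaza--Seregin--{\v{S}}ver\'{a}k fashion. The stress coupling satisfies $|(\mathbb{S}(\Psi),\nabla U)|\leq CR\,\|\nablaq\Psi\|_{L^2_M(\O)}\,\|\nabla U\|$ by Cauchy--Schwarz in $L^2_M(\D)$ using $|q|\leq R$ (the largest radius of the $\D^j$'s) and the normalisation $\int_\D M\,\dq=1$, so Young's inequality produces a term $CR^{2}\|\nablaq\Psi\|^2_{L^2_M}$ that is absorbed into the $\Psi$-dissipation provided $R$ is sufficiently small.

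For the Fokker--Planck part I would test with $\Psi$ and invoke the generalisation \cref{Eq:Chain} of Alikhanov's inequality in the Hilbert space $L^2_M(\O)$ with trivial initial datum, obtaining $(\Psi,\pt(Mk*\Psi))_\O\geq\tfrac12\pt(k*\|\Psi\|^2_{L^2_M(\O)})$. Splitting $M\hpsi u-M\bar\hpsi\bar u=M\bar\hpsi U+M\Psi\bar u+M\Psi U$ and analogously the drag term, integration by parts in $x$ using $\div\bar u=\div U=0$ and in $q$ using $M|_{\p\D}=0$ eliminates the two quadratic-in-$\Psi$ contributions $(M\Psi\bar u,\nabla\Psi)_\O$ and $(M\Psi U,\nabla\Psi)_\O$. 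The surviving cross-terms are estimated using the $L^\infty$-bounds on $\bar\hpsi,\nabla\bar\hpsi$ and the $L^\infty(L^3)$-bound on $\nabla\bar u$, with the $|q|$-factor contributing an extra $R^2$ in each problematic contribution, which is again absorbed for small $|\D|$.

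Combining the two pieces yields a schematic estimate
\begin{align*}
\tfrac12\ddt\|U\|^2_{L^2}+\tfrac12\pt\bigl(k*\|\Psi\|^2_{L^2_M(\O)}\bigr)+c\bigl(\|\nabla U\|^2+\|\nabla\Psi\|^2_{L^2_M}+\|\nablaq\Psi\|^2_{L^2_M}\bigr)\leq C\bigl(\|U\|^2+\|\Psi\|^2_{L^2_M}\bigr),
\end{align*}
with $c>0$ precisely when $|\D|$ is sufficiently small. The main obstacle is the Gronwall step: the LHS contains the nonlocal quantity $k*\|\Psi\|^2_{L^2_M}$ while the coupling naturally produces $\|\Psi\|^2_{L^2_M}$ itself on the RHS, and these are bridged only through the singular kernel $\tilde k$. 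The Sonine inverse convolution identity \cref{FracProp} together with the bound \cref{Eq:KernelNorm}, giving $\int_0^t\|\Psi\|^2_{L^2_M}\,\ds\leq k(T)^{-1}(k*\|\Psi\|^2_{L^2_M})(t)$, effect this bridge, but only after the stress and drag cross-terms have been absorbed; the residual coefficient takes the form $\tilde k(T)-CR^{2}$ (matching the comment preceding the theorem) and is positive precisely when the volume of $\D$ is sufficiently small. A nonlocal Gronwall-type argument then forces $U\equiv 0$ and $\Psi\equiv 0$.
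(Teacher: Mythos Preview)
Your proposal follows essentially the same strategy as the paper: subtract the two formulations, test the Navier--Stokes difference with $U$ and the Fokker--Planck difference with $\Psi$, apply Alikhanov's inequality \eqref{Eq:Chain} to the nonlocal time derivative, split the bilinear convection and drag terms, and exploit the smallness of $q_\infty$ (your $R$) to absorb the cross-couplings before closing with a Gronwall argument.

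Two minor remarks. First, your framing suggests a genuine weak--strong result (only $(\bar u,\bar\hpsi)$ regular), but the theorem as stated places \emph{both} solutions in the regular class; you tacitly use this when asserting $\Psi\in L^\infty(0,T;W^{1,\infty}(\O))$ is admissible, which needs $\hpsi\in W^{1,\infty}$ as well. Second, the paper organises the closing step slightly differently: it integrates the Navier--Stokes inequality over $(0,t)$ but \emph{convolves} the Fokker--Planck inequality with $\tilde k$, so that \eqref{FracProp} recovers $\|\Psi(t)\|_{L^2_M}^2$ directly and the dissipation picks up the factor $\tilde k(T)$ via the argument of \eqref{Eq:EstimateGradKernel}. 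After these manipulations only $\int_0^t(\cdots)\|U\|^2$ survives on the right-hand side, so the standard (not a nonlocal) Gronwall lemma finishes. Your schematic retains a $\|\Psi\|_{L^2_M}^2$ term on the right and proposes bridging it via \eqref{Eq:KernelNorm}; that can be made to work too, but it is slightly less direct than the paper's route.
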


As the proofs of \cref{Thm:Main} and \cref{Thm:WeakStrong} on the well-posedness of the model \cref{Def:System} are quite technical, we first provide a structure with five sub-steps.

\smallskip

\noindent\textbf{Step 1 ($\l,m,n$):} We start by introducing an $\l$-approximate problem based on truncating $\hpsi$ in the drag term of the nonlocal Fokker--Planck equation. Additionally,  Kramers' expression is truncated to obtain an energy balance. To this truncated problem with solution $(u^\l, \hpsi^\l)$, we then apply a Galerkin--Faedo approximation, yielding  a discrete solution $(\ulmn, \hpsilmn)$ of the truncated system, where
$n \in \mathbb{N}$ stands for the (finite) dimension of the approximate divergence-free velocity subspace, and $m \in \mathbb{N}$ for the (finite) dimension of the probability density subspace.
\medskip

\noindent\textbf{Step 2 ($n \to \infty$):} In this step, we derive $n$-independent a priori estimates allowing us to pass to the limit $n \to \infty$ in the $(n,m)$-th Galerkin system, i.e., we obtain a semi-discrete solution $(u^{\l,m}, \hpsilm)$. Of crucial importance for the following steps is the nonnegativity  of $\hpsilm$, which is also established in this step.  It is derived by using a technique based on the weak maximum principle for nonlocal diffusion equations.
\medskip

\noindent\textbf{Step 3 ($m \to \infty$):} We derive $m$-independent a priori bounds of the sequence $(\hpsilm)_m$, allowing us to pass to the limit $m \to \infty$ in the $m$-th Galerkin system. This argument involves proving the strong convergence of $\hpsilm$ to $\hpsil$ in the Maxwellian-weighted space $L^1_M(\O_T)$. 
We show that $(G(\hpsilm))_m$ is bounded in $L^\infty(0, T; L_M^1(\O))$, where $G(s) = s \ln s + e^{-1}$, which we use in combination with our novel nonlocal compactness result to first obtain weak convergence and then strong convergence to $\hpsil$ in $L^1_M(\O_T)$.
\medskip

\noindent\textbf{Step 4 ($\l \to \infty$):} Finally, we collect the necessary weak and strong convergence results from all our energy estimates. We pass to the limit $\l \to \infty$ in the truncation parameter $\l$ within the various approximating sequences and then pass to the limit $\l \to \infty$ in Kramers' expression, which then completes the proof of \cref{Thm:Main}. \medskip

\noindent\textbf{Step 5 (Uniqueness):} In this last step, we prove \cref{Thm:WeakStrong}, that is, the uniqueness of weak solutions with sufficient regularity. We do this by means of a suitable testing procedure.

\subsection{Step 1: Truncation and Galerkin approximation}
Following the ideas in \cite{bulicek2013existence}, we truncate the extra-stress tensor $\mathbb{S}$, see \eqref{Def:TensorS}, in the Navier--Stokes equation using a cutoff function $\Gamma \in C_c^\infty(-2,2)$ such that $\Gamma(s)=1$ for all $s \in [-1,1]$. In addition, we define $\Gamma_\l(s)=\Gamma(s/\l)$ for an arbitrary $\l \in \N$. The primitive and a scaled version of $\Gamma_\l$ are denoted by, respectively, 
\begin{equation}\label{Def:Tl} 
\begin{aligned}
T_\l(s)&=(1*\Gamma_\l)(s) \quad \text{and} \quad \Lambda_\l(s) =s\Gamma_\l(s).
\end{aligned}\end{equation}
For a real-valued function $\Phi$ defined on $D$, we define the $\l$-th approximation of $\mathbb{S}$ as follows
\begin{align*}
\mathbb{S}_\l(\Phi):=\mathbb{S}(T_\l(\Phi)) .
\end{align*}
The definition of $\mathbb{S}$ given in \eqref{Def:TensorS} and integration by parts, using that $M \in C^1_0(D)$, yield
\begin{align}
\mathbb{S}_\l(\Phi) 
    &=\int_\D \bigg[ - KM(q)T_\l(\Phi) \mathbb{I}+\sum_{j=1}^K T_\l(\Phi)\nablaqj M(q) \otimes q^j  \bigg] \dq,
\label{sl} \end{align}
 see \cite[Lemma 3.1]{barrett2011existence} for details. Then, we define the $\l$-approximation $(u^\l, p^\l, \hpsil)$ of the nonlocal Navier--Stokes--Fokker--Planck system \cref{Def:System} as
\begin{align} 
\begin{aligned}
\label{Def:Truncation_Velocity}
\pt u^\l + \div  (\Gamma_\l(|u^\l|^2)u^\l \otimes u^\l) -  \nabla u^\l +\nabla p^\l - \div  \, \mathbb{S}_\l(\hpsil) &=  f &&\text{in } \Omega_T, \\
\div  \, u^\l &=0 &&\text{in } \Omega_T,
\end{aligned}
\end{align}
coupled with
\begin{align}
\label{Def:Truncation_PDF} M\pt  (k*[\hpsil-\hpsil_0]) - \Delta  (M\hpsil)-\div  _{\!q}(M\nablaq \hpsil)+ \div  (M\hpsil \ul) &\notag \\  +~\div  _{\!q} (M\Lambda_\l(\hpsil)(\nabla \ul) q) &=0 &&\text{in } \O_T,
\end{align}
with initial and boundary data as before in \cref{Def:Data}, with $u$ and $\hpsi$ being replaced by $u^\l$ and $\hpsil$. In order to avoid technical difficulties, we truncate the initial condition of $\hpsil$ as follows: $(M(k*[\hpsil-\hpsil_0])(0),\zeta)_\O=0$ for any $\zeta \in L^\infty(\O)$ with $\hpsil_0:=T_\l(\hpsi_0)$. We note that we also modified the nonlocal Fokker--Planck equation in the drag term by replacing $\hpsil$ by $\Lambda_\l(\hpsil)$ to preserve the energy identity following the truncation process resulting in \eqref{Def:Truncation_Velocity}.

We begin by stating the result concerning the existence of a global-in-time large-data weak solution to the $\l$-truncated system \eqref{Def:Truncation_Velocity}, \eqref{Def:Truncation_PDF}, which will be proved at the end of Step 3. In Step 4 we shall then pass to the limit $\ell \to \infty$ to remove the truncation. 

\begin{Lemma} \label{Lem:ExistenceLapprox}
    Let Assumption \ref{Assumptions} hold. Then, there exists a weak solution $(u^\l,\hpsil)$ to the $\l$-approximation \eqref{Def:Truncation_Velocity}, \eqref{Def:Truncation_PDF} such that
    $$\begin{aligned}
        u^\l &\in L^\infty(\I;L^2_{0,\div  }(\Omega)) \cap L^2(\I;W^{1,2}_{0,\div  }(\Omega)) \cap W^{1,2}(\I;[W^{1,2}_{0,\div  }(\Omega)]^*), \\
        \mathbb{S}_\l(\hpsil) &\in L^\infty(\I;L^\infty(\Omega)^{d\times d}), \\
        \hpsil &\in L^\infty(\Omega_T;L^1_M(\D)) \text{ with }\hpsil \geq 0 \text{ a.e. in } \O_T, \\
        \nablaxq \hpsil &\in L^2(\Omega_T;L^1_M(\D)^{d(K+1)}), \\
        Mk*\hpsil &\in L^1(0,T;L^1(\mathcal{O}))\cap BV(\I;[W^{1,\infty}(\O)]^*),
    \end{aligned}$$
    with
    \begin{equation} \label{Eq:Lapproximation}\begin{aligned}
        &\langle \pt \ul,w \rangle -(\Gamma_\l(|\ul|^2)\ul \otimes \ul,\nabla w)_{\Omega_T}\\
        &\quad + (\nabla \ul,\nabla w)_{\Omega_T}+(\mathbb{S}_\l(\hpsil),\nabla w)_{\Omega_T}   =(f,w)_{\Omega_T}, \\[.2cm]
        & \langle M\pt(k*[\hpsil-\hpsil_0]),\zeta \rangle   + (M\nabla\hpsil,\nabla\zeta)_{\O_T}   +(M\nablaq \hpsil,\nablaq \zeta)_{\O_T} \\ 
        &\quad =  (M\ul\hpsil,\nabla\zeta)_{\O_T}+(M\Lambda_\l(\hpsi) (\nabla \ul) q ,\nablaq \zeta)_{\O_T},
    \end{aligned}\end{equation}
    for any $w \in L^2(\I;W^{1,2}_{0,\div  }(\Omega))$ and $\zeta \in C([\I]; W^{1,\infty}(\O))$, and the initial data is attained in the sense
    $$\lim_{t \to 0} (\ul(t)-u_0,w)_\Omega + (M(k*[\hpsil-\hpsil_0])(t),\zeta)_\O=0 \quad \forall w \in L^2_{0,\div  }(\Omega),\, \zeta \in L^\infty(\O).$$
    Moreover, the tuple $(\ul,\hpsil)$ satisfies the energy estimate
    \begin{equation} \label{Eq:EnergyLexistenc} \begin{aligned} & \|\ul\|_{L^\infty(0,T;L^2(\Omega))}^2 +  \|\nabla \ul\|_{L^2(\Omega_T)}^2+ \|G(\hpsil)\|_{L^1_{M}(\O_T)} + \Big\|\nablaxq \sqrt{\hpsil} \Big\|_{L^2_{M}(\O_T)}^2 \\[0cm] &\quad \leq C\Big( \|\ul_0\|_{L^2(\Omega)}^2 + \|G(\hpsil_0)\|_{L^1_{M}(\O)}+\|f\|_{L^{2}(\Omega_T)}^{2} \Big) \\
    &\quad \leq C(\l).
    \end{aligned}\end{equation}
\end{Lemma}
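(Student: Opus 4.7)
The plan is to follow Steps~1--3 of the roadmap: construct a doubly-indexed Faedo--Galerkin scheme, of dimension $n$ for the velocity and $m$ for the probability density, and then pass to the limits $n\to\infty$ and $m\to\infty$ successively. For the velocity I would take the first $n$ eigenfunctions of the Stokes operator in $W^{1,2}_{0,\div}(\Omega)$, and for the density a Hilbert basis of the Maxwellian-weighted space $W^{1,2}_M(\O)$, for instance the eigenfunctions of $-M^{-1}[\div(M\nabla\cdot)+\divq(M\nablaq\cdot)]$. The Galerkin projection of \cref{Def:Truncation_Velocity}--\cref{Def:Truncation_PDF} becomes an $(n+m)$-dimensional system with a memory convolution; inverting the nonlocal derivative by the resolvent identity \cref{FracProp} recasts it as a Volterra-type integral equation, for which local-in-time solvability follows from a Banach fixed-point argument, with global existence then a consequence of the a priori bound \cref{Eq:EnergyLexistenc}.

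The heart of the argument is this energy bound, which I would obtain by testing the discrete velocity equation with $\ulmn$ itself and the Fokker--Planck equation with $1+\ln(\hpsilmn+\delta)$ followed by $\delta\to 0$. In the first test, the truncated convective term $(\Gamma_\l(|\ulmn|^2)\ulmn\otimes\ulmn,\nabla\ulmn)$ equals $\tfrac12(\ulmn,\nabla T_\l(|\ulmn|^2))$ by the chain rule $T_\l'=\Gamma_\l$ from \cref{Def:Tl}, and it vanishes because $\div\,\ulmn=0$; in the second, the convex fundamental identity \cref{Eq:Chain2} with $G(s)=s\ln s+e^{-1}$ controls the nonlocal time derivative, producing the bound on $k\ast G(\hpsilmn)$ together with the Fisher information $\|\nablaxq\sqrt{\hpsilmn}\|_{L^2_M}^2$. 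Crucially, the cross-coupling $(\mathbb{S}_\l(\hpsilmn),\nabla\ulmn)$ in the momentum equation and $-(M\Lambda_\l(\hpsilmn)(\nabla\ulmn)q,\nablaq\ln\hpsilmn)$ in the entropy estimate cancel exactly, via the Kramers identity \cref{sl} together with the factorisation $\Lambda_\l(s)=s\Gamma_\l(s)$; this cancellation is the reason why Kramers' stress and the drag term must be simultaneously truncated.

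The passage $n\to\infty$ is routine, since for each fixed $m$ the Fokker--Planck equation is linear in $\hpsilmn$ and $\ulmn$ lies in a finite-dimensional space. At this stage I would also establish nonnegativity of $\hpsilm$ by the weak maximum principle for nonlocal diffusion: testing with $-(\hpsilm)_-$ and applying \cref{Eq:Chain2} with $G(s)=\tfrac12 s_-^2$ yields $\pt(k\ast\|(\hpsilm)_-\|^2_{L^2_M})\leq 0$, whence $(\hpsilm)_-\equiv 0$ because $\hpsilm_0=T_\l(\hpsi_0)\geq 0$. The main obstacle is the limit $m\to\infty$, where strong convergence of $(\hpsilm)_m$ in $L^1_M(\O_T)$ is needed to identify the limits of the three nonlinear quantities $\mathbb{S}_\l(\hpsilm)$, $M\hpsilm\ulm$ and $M\Lambda_\l(\hpsilm)(\nabla\ulm)q$. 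Here I would invoke the novel \cref{thm:nonlocal_aubin_measures}: the energy estimate supplies a uniform bound on $M\hpsilm$ in $L^1(0,T;W^{1,1}(\O))$ via the pointwise identity $|\nabla\hpsilm|=2\sqrt{\hpsilm}\,|\nabla\sqrt{\hpsilm}|$ combined with Cauchy--Schwarz, while testing the weak formulation with $\zeta\in C([\I];W^{1,\infty}(\O))$ yields a uniform bound on $\pt(Mk\ast[\hpsilm-\hpsilm_0])$ in $\mathcal{M}(0,T;[W^{1,\infty}(\O)]^*)$. Applying \cref{thm:nonlocal_aubin_measures} with $p=1$, $X=W^{1,1}(\O)$, $Y=L^1(\O)$ and $Z=[W^{1,\infty}(\O)]^*$ then delivers the required relative compactness and hence strong $L^1_M$-convergence; the remaining terms pass to the limit by dominated convergence (using boundedness of $T_\l$, $\Gamma_\l$, $\Lambda_\l$) and by weak-$\ast$ compactness for the nonlocal time derivative, with \cref{Eq:EnergyLexistenc} inherited by lower semicontinuity.
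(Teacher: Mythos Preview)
Your proposal has the right overall architecture and correctly identifies the crucial cancellation between the Kramers stress term and the truncated drag term, but there is a structural gap in the order of operations that would prevent the argument from closing. The logarithmic test function $1+\ln(\hpsilmn+\delta)$ is \emph{not admissible} at the fully discrete level: in a Galerkin scheme for the density the test functions are constrained to the finite-dimensional span, and $G_\delta'(\hpsilmn)$ does not lie there. Worse, at this stage $\hpsilmn$ is merely a finite linear combination of basis functions and need not be nonnegative, so the limit $\delta\to0$ is ill-defined. The same obstruction blocks your nonnegativity argument: $-(\hpsilm)_-$ is not in the span either, so it cannot be used as a test function while the density is still discrete. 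The paper therefore \emph{reverses your limit order}: it sends the density dimension to infinity first, using only $L^2_{M^m}$-type bounds obtained by testing with $\hpsimn$ itself (which \emph{is} in the span); these bounds are not uniform in the velocity index --- they rely on the finite-dimensional inverse estimate $\|\nabla\umn\|_{L^\infty(\Omega_T)}\le C(m,\ell)$ --- but that is harmless because the velocity stays discrete. Only \emph{after} the density continuum limit does one prove nonnegativity (and there the inequality is not $\le 0$ as you claim: the drag term contributes $C(m,\ell)\|[-\hpsim]_+\|_{L^2_{M^m}}^2$ on the right, so a Gronwall step is required) and then perform the entropy test, recovering the cancellation and \cref{Eq:EnergyLexistenc}.

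Two further points you will need to address. First, the paper does not use the degenerate weight $M$ at the Galerkin stage but a regularised Maxwellian $M^m:=M+1/m$, with the density basis taken orthonormal in $L^2_{M^m}(\O)$; because $M$ vanishes on $\partial D$, the unregularised weighted spaces are degenerate and neither the spectral construction nor the $L^2$-level estimates (which exploit $M^m\ge 1/m$) would close without this. Second, the Fokker--Planck equation is \emph{not} linear in $\hpsilmn$ --- the drag term carries $\Lambda_\ell(\hpsilmn)=\hpsilmn\Gamma_\ell(\hpsilmn)$ --- so your justification ``routine, since linear'' for the first limit is incorrect; the paper instead relies on the $L^2_{M^m}$ bound together with a bound on $\pt(k*\hpsimn)$ in the dual and applies \cref{thm:nonlocal_aubin_measures} already at this first limit. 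Your use of \cref{thm:nonlocal_aubin_measures} with $X=W^{1,1}(\O)$, $Y=L^1(\O)$, $Z=[W^{1,\infty}(\O)]^*$ for the final density limit is, on the other hand, exactly what the paper does.
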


We point out that Lemma \ref{Lem:ExistenceLapprox} is formulated on a divergence-free velocity space, and thus the pressure is absent. Although \eqref{Def:Truncation_Velocity} also involves the pressure $p^\ell$ acting as a Lagrange multiplier, it does not play a role in the Fokker--Planck equation; this motivates us to consider the weak formulation \eqref{Eq:Lapproximation}.

For simplicity of notation, we temporarily omit the superscript $\l$ in $\hpsil$ and $\ul$ and simply write $\hpsi$ and $u$. Analogously, 
in terms of our abbreviated notation $(\ulmn,\hpsilmn)$ reads $(\umn, \hpsimn)$.
The omitted superscript $\ell$ will be reinstated later, in Step 4, when we consider passing to the limit $\l \to \infty$. We emphasize that by doing so we will arrive at upper bounds where the index $\l$ appears explicitly on the right-hand sides of inequalities, but on the left-hand sides of inequalities it is suppressed in our notation. However, it is of utmost importance to carefully track all $\l$-dependencies in the constants because in Step 4 we have to pass to the limit $\l \to \infty$, and thus uniformity in $\l$ of our bounds has to be shown. 

To prove \cref{Lem:ExistenceLapprox}, we introduce a Galerkin approximation of the $\l$-approximated system \eqref{Eq:Lapproximation}. 
To this end we define an approximated Maxwellian $M^m$, $m \in \N$, by 
\begin{equation} \label{Def:ApproxMax}
M^m:= M
+ 1/m.
\end{equation}
We choose the same Galerkin subspaces as in \cite{bulicek2013existence}. 
In fact, there exists a countable set $\{w_i\}_{i \in \N}$ in $W^{1,2}_{0,\div  }(\Omega) \cap W^{d+1,2}(\Omega)^d$, whose linear span is dense in $L^2_{0,\div  }(\Omega)$, such that $w_i$, $i \in \N$, are orthogonal in the inner product of $W^{d+1,2}(\Omega)^d$ and orthonormal in the inner product of $L^2(\Omega)^d$. Similarly, there is a countable set $\{\zeta_i^m\}_{i \in \N}$ of eigenfunctions in $W^{1,2}(\O)$ that are orthogonal in $W^{1,2}_{M^m}(\O)$ and orthonormal in $L^2_{M^m}(\O)$. Finally, we fix $m,n \in \N$ and look for a tuple $(u^{m,n},\hpsi^{m,n})$ given by
$$\begin{aligned} u^{m,n}(x,t)&:=\sum_{i=1}^m c_i^{m,n}(t) w_i(x),\qquad \hpsi^{m,n}(x,q,t):=\sum_{i=1}^n d_i^{m,n}(t) \zeta_i^m(x,q),
\end{aligned}$$
that solves the Galerkin system
    \begin{equation} \label{Eq:Galerkin}\begin{aligned}
        &\langle \pt \umn,w_i \rangle  -(\Gamma_\l(|\umn|^2)\umn \otimes \umn,\nabla w_i)_\Omega + (\nabla \umn,\nabla w_i)_\Omega +(\mathbb{S}_\l(\hpsimn),\nabla w_i)_\Omega \\ &\quad = (f,w_i)_{\Omega} \\[.2cm]
        & \langle M^m\pt( k*[\hpsimn-\hpsimn_0]),\zeta^m_i \rangle  + (M^m\nabla\hpsimn,\nabla\zeta^m_i)_{\O}+(M^m\nablaq \hpsimn,\nablaq \zeta^m_i)_{\O} \\ & \quad = (M^m\umn\hpsimn,\nabla\zeta^m_i)_{\O} +({M} \Lambda_\l(\hpsimn)(\nabla \umn) q,\nablaq \zeta^m_i)_{\O}, 
    \end{aligned}\end{equation}
    for a.e. $t\in (\I)$, with initial conditions
\begin{equation} \label{Eq:DefinitionInitials}
    \begin{aligned}\umn(0)&=u_0^m:=\sum_{i=1}^m (u_0,w_i)_\Omega w_i, \\
    (k*[\hpsimn-\hpsimn_0])(0)&=0,      \end{aligned}\end{equation}
    where we define $\hpsimn_0$ by
    \begin{equation} \label{Eq:DefinitionInitials2}\hpsi_0^{m,n}:=\sum_{i=1}^n (T_\l(\hpsi_0^m),\zeta_i^m)_\O \zeta_i^m,\end{equation} with $\hpsi_0^m:=\hpsi_0 M/M^m$.
We stress that in \eqref{Eq:Galerkin} the original Maxwellian $M$ is used in combination with $\Lambda_\l (\cdot)$, while at all other locations the approximated one $M^m$ is present.
    
     The local-in-time existence of $(\umn,\hpsimn)$ for fixed $m,n$ follows from the standard theory of nonlocal differential equations, see \cite[Ch.~2, Theorem 3.5]{gripenberg1990volterra}. The estimates established below allow us to extend the solution until the fixed final time $T$.

\subsection{Step 2: Uniform a priori estimates in $n$ and the limit process $n \to \infty$}
First, we state and prove a lemma that provides us with a $n$-independent energy estimate for the tuple $(\umn,\hpsimn)$. Thereafter, we pass to the limit $n \to \infty$ in the Galerkin system \cref{Eq:Galerkin}, \cref{Eq:DefinitionInitials}.

     \begin{Lemma} \label{Lem:Nindep} Let Assumption \ref{Assumptions} hold. For the Galerkin solution $(\umn,\hpsimn)$ of \cref{Eq:Galerkin}, the following $n$-independent a priori estimate holds:
     \begin{equation} \label{Eq:NindepPsi}\begin{aligned} & \|\umn\|_{L^\infty(\Omega_T)}^2 + \|\nabla \umn\|_{L^\infty(\Omega_T)}^2+\|\hpsimn\|_{L^\infty(\I;L^2(\O))}^2+ \|\nablaxq \hpsimn\|_{L^2(\O_T)}^2   \\ &\quad \leq C(m,\l,T).
         \end{aligned}\end{equation}
     \end{Lemma}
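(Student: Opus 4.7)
The plan is to derive two separate a priori estimates — one for $\umn$ and one for $\hpsimn$ — and then exploit the special structure of the two Galerkin subspaces to upgrade the natural $L^2$-type bounds into the $L^\infty$-type bounds that appear on the left-hand side of \eqref{Eq:NindepPsi}. For the velocity, I would first test the discrete momentum equation in \eqref{Eq:Galerkin} with $\umn$ itself (admissible by linearity in the test function). The truncated convective term vanishes: writing $H_\l(s):=\int_0^s\Gamma_\l(\tau)\,\d\tau$ and using $\div\,\umn=0$ together with $\umn|_{\p\Omega}=0$, one has $(\Gamma_\l(|\umn|^2)\umn\otimes\umn,\nabla\umn)_\Omega=\tfrac12\int_\Omega\umn\cdot\nabla H_\l(|\umn|^2)\,\d x=0$. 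The truncated stress satisfies the pointwise bound $\|\mathbb{S}_\l(\hpsimn)\|_{L^\infty(\Omega_T)}\le C(\l)$, which follows from representation \eqref{sl}, the pointwise estimate $|T_\l|\le 2\l$, the boundedness of $D$, and $M\in C^1_0(\overline{D})$. Absorbing this contribution and the forcing $f$ via Young's inequality and applying the classical Gronwall lemma yields $\|\umn\|_{L^\infty(0,T;L^2(\Omega))}^2+\|\nabla\umn\|_{L^2(\Omega_T)}^2\le C(\l,T)$. Because $\umn$ lies in the fixed finite-dimensional subspace spanned by $\{w_i\}_{i=1}^m\subset W^{d+1,2}(\Omega)^d\hookrightarrow W^{1,\infty}(\Omega)^d$, all norms on this subspace are equivalent, and one concludes $\|\umn\|_{L^\infty(\Omega_T)}+\|\nabla\umn\|_{L^\infty(\Omega_T)}\le C(m,\l,T)$.

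For the probability density, I would then test the second equation in \eqref{Eq:Galerkin} with $\hpsimn$. The nonlocal time derivative is handled by Alikhanov's inequality \eqref{Eq:Chain} applied in $H=L^2_{M^m}(\O)$:
\[
\bigl(\hpsimn,\pt(M^m k*[\hpsimn-\hpsi_0^{m,n}])\bigr)_{\O}\;\ge\;\tfrac12\,\pt\bigl(k*\bigl[\|\hpsimn\|_{L^2_{M^m}}^2-\|\hpsi_0^{m,n}\|_{L^2_{M^m}}^2\bigr]\bigr),
\]
while the two diffusion terms yield $\|\nablaxq\hpsimn\|_{L^2_{M^m}(\O)}^2$ on the left. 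The convective term on the right is estimated by H\"older's inequality using the $L^\infty$-bound on $\umn$ obtained in the first step, and the drag term exploits the pointwise bound $|\Lambda_\l|\le 2\l$ together with $\|\nabla\umn\|_{L^\infty(\Omega_T)}\le C(m,\l,T)$ and the boundedness of $q$ on $D$. A single application of Young's inequality absorbs one half of the gradient term back to the left, leaving an estimate of the form
\[
\pt(k*[y-y_0])(t)+\|\nablaxq\hpsimn(t)\|_{L^2_{M^m}(\O)}^2\;\le\;C(m,\l,T)\bigl(1+y(t)\bigr),
\]
where $y(t):=\|\hpsimn(t)\|_{L^2_{M^m}(\O)}^2$ and $y_0:=\|\hpsi_0^{m,n}\|_{L^2_{M^m}(\O)}^2$; the latter is controlled uniformly in $n$ by Bessel's inequality for the $L^2_{M^m}$-orthonormal basis $\{\zeta_i^m\}$ together with $|T_\l|\le 2\l$.

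Integrating in $t$ and exploiting the inverse convolution identity \eqref{FracProp} together with the Hardy--Littlewood--Sobolev-type bound \eqref{Eq:KernelNorm}, a nonlocal Gronwall argument in the spirit of \cite{vergara2008lyapunov} converts the convolution inequality into a bound $\|y\|_{L^\infty(0,T)}+\|\nablaxq\hpsimn\|_{L^2(\O_T)}^2\le C(m,\l,T)$. Since $M^m=M+1/m\ge 1/m>0$ on $\overline{D}$, the weighted $L^2_{M^m}$-norm dominates the unweighted $L^2$-norm, which, combined with the velocity bound of the first paragraph, gives the desired $n$-independent estimate \eqref{Eq:NindepPsi}. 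As a by-product, the same argument extends the local-in-time existence of the coefficients $(c_i^{m,n},d_i^{m,n})$ to all of $[\I]$.

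The hard part will be the nonlocal Gronwall step: the $\PC$ kernel $k$ is neither differentiable nor bounded near $t=0$, so the classical differential Gronwall lemma is not directly applicable. The remedy is to use the Sonine pairing $k*\tilde k=1$ and convolve the whole inequality with $\tilde k$ — formally inverting the nonlocal time derivative via \eqref{FracProp} — to obtain an honest integral Gronwall inequality for the nonnegative scalar $y(t)$, closely paralleling the fractional-order treatment of \cite{vergara2008lyapunov}. One must additionally track the dependence of all constants on $m$ and $\l$ very carefully, since only a genuinely $n$-independent (though possibly $m$- and $\l$-dependent) bound is needed here; uniformity in $m$ and $\l$ will be recovered later in Steps 3 and 4.
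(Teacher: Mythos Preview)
Your proposal is correct and follows essentially the same route as the paper: test \eqref{Eq:Galerkin}$_1$ with $\umn$, use the $L^\infty$ bound on $\mathbb{S}_\l$ and finite-dimensional norm equivalence to get $\|\umn\|_{W^{1,\infty}(\Omega_T)}\le C(m,\l,T)$, then test \eqref{Eq:Galerkin}$_2$ with $\hpsimn$, apply Alikhanov, convolve with $\tilde k$, and invoke a nonlocal (Henry--)Gronwall inequality. The only cosmetic difference is that the paper observes the Fokker--Planck convective term $(M^m\umn\hpsimn,\nabla\hpsimn)_\O$ actually vanishes by $\div\,\umn=0$ (as you already did for the Navier--Stokes convective term), so your H\"older estimate of this term is unnecessary but harmless.
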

     \begin{proof}
         Thanks to our assumptions on $M$, $M^m$ and $T_\l$, see Assumption \ref{Assumptions}, \cref{Def:Tl} and \cref{Def:ApproxMax}, we have that 
         $$\|\mathbb{S}_\l(\hpsimn)\|_{L^\infty(0,T;L^\infty(\Omega))}\leq C\l.$$ 
         We multiply the $i$-th equation in \eqref{Eq:Galerkin}$_1$ by $c_i^{m,n}(t)$ and sum with respect to $i=1,\dots,m$ to deduce that 
         $$\frac12 \ddt \|\umn\|_{L^2(\Omega)}^2 + \|\nabla \umn\|_{L^2(\Omega)}^2 = -(\mathbb{S}_\l(\hpsimn),\nabla \umn)_\Omega + (f,\umn)_{\Omega},$$
         where we have used that the convective term vanishes thanks to $\div\,   \umn=0$.
         Then, using Young's inequality, we have after integration over $(\I)$ and applying the supremum over all $t \in (\I)$ that
         \begin{equation}\label{Eq:NindepU} \sup_{t\in(\I)} \|\umn\|_{L^2(\Omega)}^2 +  \|\nabla \umn\|_{{L^2(\Omega_T)}}^2 \leq C(M,\l,u_0,f).
         \end{equation}
        Using the definition of $\umn$ via its expansion in terms of the divergence-free Galerkin basis functions for the velocity field, we further obtain that
        $$\sup_{t,i} \Big( |c_i^{m,n}(t)|+\Big|\ddt c_i^{m,n}(t)\Big|\Big) \leq C(m,\l,u_0,f,M).$$
        
         Similarly, multiplying the $i$-th equation in \eqref{Eq:Galerkin}$_2$ by $d_i^{m,n}(t)$ and summing with respect to $i=1,\dots,n$, we deduce that
         \begin{equation}\label{Eq:TestedFP}\begin{aligned} &(M^m\pt( k*[\hpsimn-\hpsimn_0]),\hpsimn)_\O + \|\nabla \hpsimn\|_{L^2_{M^m}(\O)}^2 + (M^m,|\nablaq \hpsimn|^2)_\O  \\ &\quad = ({M}\Lambda_\l(\hpsimn)(\nabla \umn) q,\nablaq \hpsimn)_\O,
         \end{aligned}\end{equation}
         where we have used again that $\div\,   \umn=0$ to eliminate the convective term. 
         The first term in \cref{Eq:TestedFP} can be estimated using Alikhanov's inequality, see \cref{Eq:Chain}, as follows:
         $$\big(M^m\pt( k*[\hpsimn-\hpsimn_0]),\hpsimn\big)_\O \geq\frac12 \pt \Big(k*\Big[ \| \hpsimn\|_{L^2_{M^m}(\O)}^2-\| \hpsimn_0\|_{L^2_{M^m}(\O)}^2\Big]\Big).$$
         Using H\"older's inequality and Young's inequality,  the right-hand side of \cref{Eq:TestedFP} can be bounded by  
         \begin{align*}
            ({M}\Lambda_\l(\hpsimn)(\nabla \umn) q,\nablaq \hpsimn)_\O &\leq \frac12 \|\nablaq \hpsimn\|^2_{L^2_{M^m}(\O)} \\
            &\quad + C(\l) \|\nabla \umn\|_{L^\infty(\Omega)}^2  \|\hpsimn\|_{L^2_{M^m}(\O)}^2,
         \end{align*}
         where we have used that $M/M^m$ is nonnegative and bounded from above by 1 to obtain the $M^m$-weighted norms on the right-hand side.
         Using the smoothness of the basis functions and noting the Sobolev embedding $W^{d+1,2}(\Omega) \hookrightarrow W^{1,\infty}(\Omega)$ and the bound \eqref{Eq:NindepU} on the functions $\umn$, it follows by norm-equivalence in finite-dimensional spaces that
         \begin{equation}\label{Eq:BoundNablaUmn}\|\nabla \umn\|_{L^\infty(0,T;L^\infty(\Omega))} \leq C(m,\l,T).
         \end{equation}
         Therefore, we deduce that
                  $$\begin{aligned}  \ddt \Big(k*\Big[\| \hpsimn\|_{L^2_{M^m}(\O)}^2-\| \hpsimn_0\|_{L^2_{M^m}(\O)}^2\Big]\Big) +  \|\nablaxq \hpsimn\|_{L^2_{M^m}(\O)}^2 \\
                  \leq  C(m,\l)  \|\hpsimn\|_{L^2_{M^m}(\O)}^2.
         \end{aligned}$$
         At this point, we convolve this inequality with the resolvent kernel $\tilde k$ and apply the inverse convolution property \cref{FracProp} to deduce for the leading term on the left-hand side of the inequality that
         $$\tilde k*\pt\Big(k*\Big[ \|\hpsimn\|_{L^2_{M^m}(\O)}^2-\|\hpsimn_0\|_{L^2_{M^m}(\O)}^2\Big]\Big) =  \|\hpsimn\|_{L^2_{M^m}(\O)}^2- \|\hpsimn_0\|_{L^2_{M^m}(\O)}^2.$$
         Together with the other terms in the inequality, we obtain         \begin{equation}\label{Eq:PreEstMN}
         \begin{aligned}   
         \|\hpsimn(t)\|_{L^2_{M^m}(\O)}^2- \|\hpsimn_0\|_{L^2_{M^m}(\O)}^2 + \Big(\tilde k*\|\nablaxq \hpsimn\|_{L^2_{M^m}(\O)}^2\Big)(t) \\ 
         \leq C(m,\l)  \Big(\tilde k*\|\hpsimn\|_{L^2_{M^m}(\O)}^2\Big)(t).
         \end{aligned}\end{equation}
Using the fact that $\tilde k$ is nonincreasing, the second term on the left can be bounded by
\begin{align}\label{Eq:EstimateGradKernel} 
\begin{aligned}
\big(\tilde k*\|\nablaxq \hpsimn\|_{L^2_{M^m}(\O)}^2\big)(t) &\geq \tilde k(t)\|\nablaxq \hpsimn\|_{L^2(0,t;L^2_{M^m}(\O))}^2\\ 
&\geq \tilde k(T)\|\nablaxq \hpsimn\|_{L^2(0,t;L^2_{M^m}(\O))}^2.
\end{aligned}
\end{align}
Therefore, we deduce that
       $$\begin{aligned} &  \|\hpsimn(t)\|_{L^2_{M^m}(\O)}^2 +\tilde k(T)\|\nablaxq \hpsimn\|_{L^2(0,t;L^2_{M^m}(\O))}^2 \\ & \quad \leq  \|\hpsim_0\|_{L^2_{M^m}(\O)}^2+C(m,\l)  \Big(\tilde k*\|\hpsimn\|_{L^2_{M^m}(\O)}^2\Big)(t),
         \end{aligned}$$
         where we have also estimated $\|\hpsimn_0\|_{L^2_{M^m}(\O)}^2$ by $\|\hpsim_0\|_{L^2_{M^m}(\O)}^2$ using the initial condition \cref{Eq:DefinitionInitials2}.
        Then, we apply the Henry--Gronwall inequality \cite[Corollary 1]{fritz2022time} to absorb the second term on the right-hand side into the left-hand side of the inequality. Thus, we obtain the $n$-uniform estimate
\begin{equation*}\begin{aligned} &  \|\hpsimn(t)\|_{L^2_{M^m}(\O)}^2 +  \|\nablaxq \hpsimn\|_{L^2(\I;L^2_{M^m}(\O))}^2  \leq C(m,\l,T) \|\hpsim_0\|_{L^2_{M^m}(\O)}^2.
         \end{aligned}
         \end{equation*}
        At this point, we use that $M^m \geq 1/m$; see \cref{Def:ApproxMax} for the definition of the approximated Maxwellian $M^m$. That completes the proof of the lemma. 
\end{proof}

The $n$-uniform estimates yield weakly converging subsequences. As we want to pass to the limit $n \to \infty$ in the Galerkin system \cref{Eq:Galerkin} which involves nonlinearities, we require a strong convergence result, and we shall apply the compact embedding \cref{thm:nonlocal_aubin_measures} to prove this. To this end, we begin by bounding the nonlocal time derivative of $\hpsimn$.

\begin{Lemma} \label{Lem:NindepTime} Let Assumption \ref{Assumptions} hold. For the nonlocal time derivative of the Galerkin solution $\hpsim$ of \cref{Eq:Galerkin}, the following estimate holds:
\begin{equation} \label{Eq:BoundDerivative} \|M^m\pt (k*[\hpsimn-\hpsimn_0])\|_{L^2(\I;[W^{1,2}(\O)]^*)} \leq C(m,\l,T).
\end{equation}
     \end{Lemma}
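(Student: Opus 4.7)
The plan is to use the second Galerkin equation in \cref{Eq:Galerkin} as a representation formula for $M^m\pt(k*[\hpsimn-\hpsimn_0])$, viewed as a functional on $W^{1,2}(\O)$, and to bound each right-hand term using the $n$-uniform estimates from \cref{Lem:Nindep} together with the finite-dimensional $L^\infty$ bounds on $\umn$ and $\nabla\umn$. The only subtlety is that \cref{Eq:Galerkin} only furnishes information when tested against the basis elements $\zeta_i^m$, $i=1,\dots,n$, whereas we need a bound in the dual of the full space $W^{1,2}(\O)$.

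To bridge this gap, I would introduce the $L^2_{M^m}(\O)$-orthogonal projection $P_n^m$ onto the finite-dimensional subspace spanned by $\zeta_1^m,\dots,\zeta_n^m$. Because the $\zeta_i^m$ are constructed to be simultaneously orthogonal in $L^2_{M^m}(\O)$ and in $W^{1,2}_{M^m}(\O)$, $P_n^m$ is contractive in both norms, and since $1/m \le M^m \le \|M\|_{L^\infty}+1$ on $\D$, these weighted norms are equivalent to the unweighted $L^2(\O)$ and $W^{1,2}(\O)$ norms with constants depending only on $m$. Hence $\|P_n^m\zeta\|_{W^{1,2}(\O)} \le C(m)\|\zeta\|_{W^{1,2}(\O)}$. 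Writing $\hpsimn = \sum_{i=1}^n d_i^{m,n}(t)\,\zeta_i^m$, a direct computation shows that $\pt(k*[\hpsimn-\hpsimn_0])$ lies in $\mathrm{span}\{\zeta_1^m,\dots,\zeta_n^m\}$ pointwise in $t$, so $L^2_{M^m}$-orthogonality gives, for any $\zeta\in W^{1,2}(\O)$, the identity $\langle M^m\pt(k*[\hpsimn-\hpsimn_0]),\zeta-P_n^m\zeta\rangle = 0$. Consequently, testing \cref{Eq:Galerkin} with the coefficients defining $P_n^m\zeta$ and summing yields
$$\langle M^m\pt(k*[\hpsimn-\hpsimn_0]),\zeta\rangle = -(M^m\nabla\hpsimn,\nabla P_n^m\zeta)_\O - (M^m\nablaq\hpsimn,\nablaq P_n^m\zeta)_\O + (M^m\umn\hpsimn,\nabla P_n^m\zeta)_\O + (M\Lambda_\l(\hpsimn)(\nabla\umn)q,\nablaq P_n^m\zeta)_\O.$$

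Each of the four terms is then estimated by Cauchy--Schwarz or H\"older: the two diffusion contributions are bounded by $C(m)\|\nablaxq\hpsimn\|_{L^2(\O)}\|\zeta\|_{W^{1,2}(\O)}$ using $M^m\in L^\infty(\D)$; the convection term by $C(m,\l,T)\|\hpsimn\|_{L^2(\O)}\|\zeta\|_{W^{1,2}(\O)}$ via the $L^\infty(\Omega_T)$-bound on $\umn$ that follows from norm equivalence in the finite-dimensional velocity subspace (cf.\ the reasoning leading to \cref{Eq:BoundNablaUmn}); and the drag term by $C(m,\l,T)\|\zeta\|_{W^{1,2}(\O)}$ using $|\Lambda_\l|\le 2\l$, the boundedness of $q$ on $\D$, $M\in L^\infty(\D)$, and \cref{Eq:BoundNablaUmn}. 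Taking the supremum over $\zeta$ in the unit ball of $W^{1,2}(\O)$, squaring, and integrating in time, the $L^\infty_tL^2_\O$-bound on $\hpsimn$ and the $L^2_tL^2_\O$-bound on $\nablaxq\hpsimn$ from \cref{Eq:NindepPsi} deliver \cref{Eq:BoundDerivative}. No fundamental obstacle is expected: all nonlinearities are tamed by the truncation $\Lambda_\l$ and by the finite-dimensional $L^\infty$ control of $\umn$, so the argument is essentially careful bookkeeping of the $m$- and $\ell$-dependencies in the constants.
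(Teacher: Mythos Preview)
Your proposal is correct and follows essentially the same route as the paper: project an arbitrary test function onto $\mathrm{span}\{\zeta_1^m,\dots,\zeta_n^m\}$, use the Galerkin equation \cref{Eq:Galerkin}$_2$ as a representation formula, and bound each term via H\"older's inequality together with the $n$-uniform estimates from \cref{Lem:Nindep} and the $L^\infty$ control on $\umn,\nabla\umn$. Your treatment is in fact slightly more explicit than the paper's on two points --- why $P_n^m$ is bounded on $W^{1,2}(\O)$ (via simultaneous orthogonality and the $m$-dependent equivalence of weighted and unweighted norms) and why the duality pairing reduces to the projected test function --- but the underlying argument is the same.
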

     \begin{proof}
        We consider an arbitrary element $\zeta \in L^2(\I;W^{1,2}(\O))$ and test the Galerkin equation of $\hpsimn$, see \cref{Eq:Galerkin}$_2$, with the orthogonal projection of $\zeta$ onto the space that is spanned by the first $m$ eigenfunctions $\{\zeta_i^m\}$, that is, $\widehat\Pi_n \zeta:=\sum_{i=1}^n (T_\l(\zeta),\zeta_i^m)_\O \zeta_i^m.$ We apply H\"{o}lder's inequality to the inner products to deduce that
$$\begin{aligned}
    &\langle M^m\pt( k*[\hpsimn-\hpsimn_0]),\Pi_n \zeta \rangle \\ & = - (M^m\nabla\hpsimn,\nabla\Pi_n \zeta)_{\O_T}+ (M^m\umn\hpsimn,\nabla\Pi_n \zeta)_{\O_T} -(M^m\nablaq \hpsimn,\nablaq \Pi_n \zeta)_{\O_T} \\&\quad +({M} \Lambda_\l(\hpsimn)(\nabla \umn) q,\nablaq \Pi_n \zeta)_{\O_T} \\
    &\leq \|M^m\hpsimn\|_{L^2(\I;W^{1,2}(\O))} \|\zeta\|_{L^2(\I;W^{1,2}(\O))}\\
    &\quad +  \|\umn\|_{L^\infty(\Omega_T)} \|M^m\hpsimn\|_{L^2(\O_T)} \|\zeta\|_{L^2(\I;W^{1,2}(\O))} \\
    &\quad +C(\l)  \|q\|_{L^\infty(\D)} \|\nabla \umn\|_{L^\infty(\Omega_T)} \|\hpsimn\|_{L^2(\I;L^2_{M^m}(\O))}\|\zeta\|_{L^2(\I;W^{1,2}(\O))}.
\end{aligned}$$
Then, using the $n$-independent estimates of \cref{Lem:Nindep} and the Lipschitz continuity of $M^m$, we obtain
$$\begin{aligned}
    \langle M^m\pt( k*[\hpsimn-\hpsimn_0]),\Pi_n \zeta \rangle&\leq C(m,\l,T) \|\zeta\|_{L^2(\I;W^{1,2}(\O))}.
\end{aligned}$$
    Thus, taking the supremum over all functions $\zeta \in L^2(\I;W^{1,2}(\O))$, we arrive at the desired bound on $M^m\pt( k*[\hpsimn-\hpsimn_0])$.
     \end{proof}

     Having proved the $n$-independent a priori estimates stated in \cref{Lem:Nindep} and \cref{Lem:NindepTime}, we are now able to pass to the limit $n\to\infty$ in the Galerkin system \cref{Eq:Galerkin}, \cref{Eq:DefinitionInitials} in the next lemma.

 \begin{Lemma}
     Let Assumption \ref{Assumptions} hold. Then, as $n \to \infty$, the limit functions $(u^m,\hpsi^m)$ satisfy the system 
     \begin{align} \label{Eq:GalerkinM}\begin{aligned}
        &\langle \pt\um,w_i \rangle  -(\Gamma_\l(|\um|^2)\um \otimes \um,\nabla w_i)_\Omega \\
        &\quad + (\nabla \um,\nabla w_i)_\Omega +(\mathbb{S}_\l(\hpsim),\nabla w_i)_\Omega =( f,w_i)_\Omega, \\
        & \langle M^m\pt(k*[\hpsim-\hpsim_0]),\zeta \rangle - (M^m\um\hpsim,\nabla\zeta)_{\O}  + (\nabla(M^m\hpsim),\nabla\zeta)_{\O} \\ &\quad +(M^m\nablaq \hpsim,\nablaq \zeta)_{\O}=({M}\Lambda_\l(\hpsim)(\nabla \um) q,\nablaq \zeta)_{\O},
    \end{aligned}\end{align}
    for any $i=1,\dots,n$ and $\zeta \in W^{1,2}(\O)$, together with the initial condition $\um(0)=u_0^m$ for the velocity and $(k*[\hpsim-\hpsim_0])(0)=0$ for the probability density.
 \end{Lemma}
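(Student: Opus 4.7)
The plan is to combine the $n$-uniform estimates of \cref{Lem:Nindep} and \cref{Lem:NindepTime} with suitable compactness arguments in order to pass to the limit $n \to \infty$ in each term of \eqref{Eq:Galerkin}. First, I would extract weakly/weakly-$*$ convergent subsequences (not relabelled). By \cref{Lem:Nindep}, $(\umn)_n$ is bounded in $W^{1,\infty}(\Omega_T)^d$ and $(\hpsimn)_n$ is bounded in $L^\infty(0,T;L^2(\O)) \cap L^2(0,T;W^{1,2}(\O))$; testing the first equation of \eqref{Eq:Galerkin} against $w_i$ together with these bounds yields $\partial_t \umn$ bounded in $L^2(0,T;[W^{1,2}_{0,\div}(\Omega)]^*)$, while \cref{Lem:NindepTime} controls $\partial_t(M^m k\ast[\hpsimn - \hpsimn_0])$ in $L^2(0,T;[W^{1,2}(\O)]^*) \hookrightarrow \mathcal{M}(0,T;[W^{1,2}(\O)]^*)$. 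By linearity of convolution with $k$, these bounds transfer to limits $\um$ and $\hpsim$ such that $\umn$, $\hpsimn$, and $\partial_t(k\ast[\hpsimn-\hpsimn_0])$ converge weakly-$*$/weakly to $\um$, $\hpsim$, and $\partial_t(k\ast[\hpsim-\hpsim_0])$, respectively.

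Next, I would upgrade these to strong convergences. For $(\umn)_n$, the finite dimensionality of the velocity Galerkin space together with the uniform bound \eqref{Eq:BoundNablaUmn} and Arzel\`a--Ascoli applied to the coefficient vector $(c_i^{m,n})_{i=1}^m$ give $\umn \to \um$ in $C([\I];W^{1,\infty}(\Omega)^d)$, and in particular $\nabla \umn \to \nabla \um$ uniformly. For $(\hpsimn)_n$, I would apply the novel \cref{thm:nonlocal_aubin_measures} with $p=1$, $X=W^{1,2}(\O)$, $Y=L^2(\O)$, and $Z=[W^{1,2}(\O)]^*$; this is permissible since $Z$ is a dual space, the embedding $X \dhookrightarrow Y$ is compact by Rellich--Kondrashov, $\tilde k \in L^1(0,T)$ by \cref{Assumptions}, and the $\mathcal{M}$-bound follows from the $L^2$-bound of the previous step. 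This yields $\hpsimn \to \hpsim$ strongly in $L^1(0,T;L^2(\O))$; interpolating with the $L^\infty(0,T;L^2(\O))$-bound upgrades this to strong convergence in $L^r(0,T;L^2(\O))$ for every $r \in [1,\infty)$.

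With these convergences in hand, passing to the limit in \eqref{Eq:Galerkin} is routine. The linear terms pass by weak convergence. The convective term $\Gamma_\l(|\umn|^2)\umn \otimes \umn$ and the stress term $\mathbb{S}_\l(\hpsimn)$, which via \eqref{sl} depends polynomially on $T_\l(\hpsimn)$, pass because $\Gamma_\l$ and $T_\l$ are bounded Lipschitz functions composed with strongly converging sequences. The two coupling terms $(M^m \umn \hpsimn, \nabla \zeta)_\O$ and $(M\Lambda_\l(\hpsimn)(\nabla \umn)q, \nablaq \zeta)_\O$ are each a product of a strongly and a weakly converging factor, and therefore pass as well. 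The initial conditions are preserved: $\umn(0) = u_0^m$ is independent of $n$, while $(k\ast[\hpsim-\hpsim_0])(0) = 0$ follows from the continuous embedding $W^{1,2}(0,T;[W^{1,2}(\O)]^*) \hookrightarrow C([\I];[W^{1,2}(\O)]^*)$ applied to $k\ast[\hpsimn - \hpsimn_0]$, together with the initial datum in \eqref{Eq:DefinitionInitials} and the convergence $\hpsimn_0 \to \hpsim_0$ inherited from \eqref{Eq:DefinitionInitials2}.

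The main obstacle is the strong convergence of $\hpsimn$: the classical Aubin--Lions lemma does not apply because the Galerkin equation only gives control over a nonlocal time derivative, and the natural bound on this derivative is measure-valued rather than $L^p$-integrable in a smaller space than $L^2$. This is exactly the setting for which \cref{thm:nonlocal_aubin_measures} was developed, so its use here is the key step of the argument.
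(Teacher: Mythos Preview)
Your proposal is correct and follows essentially the same route as the paper: extract weak/weak-$*$ limits from \cref{Lem:Nindep} and \cref{Lem:NindepTime}, upgrade to strong convergence of $\umn$ via the finite-dimensional velocity space and of $\hpsimn$ via the nonlocal compactness result \cref{thm:nonlocal_aubin_measures}, then pass to the limit term by term and verify the initial data. Your choice to apply \cref{thm:nonlocal_aubin_measures} with $p=1$ and then interpolate against the $L^\infty(0,T;L^2(\O))$ bound is in fact slightly more careful than the paper's direct claim of $L^2(\O_T)$ strong convergence, since \cref{Assumptions} only guarantees $\tilde k \in L^1(0,T)$.
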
    
 \begin{proof}
     Using the $n$-independent bounds, see \Cref{Lem:Nindep} and \cref{Lem:NindepTime}, we obtain the weak/weak-$*$ convergences
     $$\begin{aligned}
     c_i^{m,n} &\overset{*}{\rightharpoonup} c_i^m &&\text{weakly-$*$ in } W^{1,\infty}(\I), \\
         \hpsimn &\rightharpoonup \hpsim &&\text{weakly in } L^2(\I;W^{1,2}(\O)), \\
         \hpsimn &\overset{*}{\rightharpoonup} \hpsim &&\text{weakly-$*$ in } L^\infty(\I;L^2(\O)), \\
         M^m\pt (k*[\hpsimn-\hpsimn_0]) &\rightharpoonup M^m\pt (k*[\hpsim-\hpsim_0]) &&\text{weakly in } L^2(\I;[W^{1,2}(\O)]^*), \\
         \mathbb{S}_\l(\hpsimn) &\overset{*}{\rightharpoonup} \mathbb{S}_\l(\hpsim) &&\text{weakly-$*$ in } L^\infty(\I;L^\infty(\Omega)^{d\times d}),
     \end{aligned}$$
     where we note that the limit of the nonlocal derivative is indeed of this form because of linearity of the convolution operator and using integration by parts, see \cite[Proposition 3.5]{li2018some}.
     Then, we use the Aubin--Lions compactness lemma (cf. \cref{Eq:aubinclassic}) and its nonlocal counterpart \cref{thm:nonlocal_aubin_measures} to infer the strong convergences       
     $$\begin{aligned}
     c_i^{m,n} &\to c_i^m &&\text{strongly in } C([\I]),  \\
         \umn &\to \um &&\text{strongly in } C([\I];W^{1,2}_{0,\div  }(\Omega) \cap W^{d+1,2}(\Omega)^d ), \\
         \hpsimn &\to \hpsim &&\text{strongly in } L^2(\O_T).
     \end{aligned}$$
     Taking the limit $n \to \infty$ in the Galerkin equations \cref{Eq:Galerkin} then gives the system \cref{Eq:GalerkinM} stated in the lemma. Regarding the attainment of the initial conditions, we argue as follows. By the strong convergence of $\umn$ stated above, we have $\umn(0) \to \um(0)$ in $W^{1,2}_{0,\div  }(\Omega) \cap W^{d+1,2}(\Omega)^d$, from which it follows that $\umn(0)=\um_0$ and thus $\um(0)=\um_0$ by the uniqueness of the limits. Moreover, by the previous convergences, we have $$\begin{aligned}
         M^m k*[\hpsimn-\hpsimn_0] \overset{*}{\rightharpoonup} M^m k*[\hpsim-\hpsim_0] ~ \text{ weakly-$*$ in } &
         L^\infty(\I;L^2(\O)) \\ &\cap H^1(\I;[W^{1,2}(\O)]^*).
     \end{aligned}$$
    As the embedding operator from $W^{1,2}(\O)$ into $L^2(\O)$ is compact, its transpose, mapping $[L^2(\O)]^*$ (which is isometrically isomorphic to $L^2(\O)$) into $[W^{1,2}(\O)]^*$ is a compact linear operator. Hence, applying the compact embedding \cref{Eq:aubinclassic}$_2$ with $X=L^2(\O)$, $Z=[W^{1,2}(\O)]^*$ and $Y=Z$, it follows that
$$\begin{aligned}
         M^m k*[\hpsimn-\hpsimn_0] \to M^m k*[\hpsim-\hpsim_0] ~ \text{ strongly in } &C([\I];[W^{1,2}(\O)]^*),
     \end{aligned}$$
     which gives, inserting $t=0$, that
     $M^m (k*[\hpsim-\hpsim_0])(0)=0$ in $[W^{1,2}(\O)]^*$. 
 \end{proof}

\subsection{Step 3: Uniform a priori estimates in $m$ and the limit process $m \to \infty$}

In Step 2 we passed to the limit $n \to \infty$ in the $(\l,n,m)$-th Galerkin system \cref{Eq:Galerkin}, which resulted in the $(\l,m)$-th approximation \cref{Eq:GalerkinM}. Next, we derive $m$-uniform bounds and take the limit $m \to \infty$ in \cref{Eq:GalerkinM}. A key step in the argument is to prove the nonnegativity of $\hpsim$. While a probability density function is by definition nonnegative, it does not automatically follow that $\hpsim$ satisfies this property; instead, the nonnegativity of $\hpsim$ has to be inferred from the nonnegativity of the initial datum $\hpsim_0$.
Having shown the nonnegativity of $\hpsim$, we shall be able to test with the logarithm of $\hpsim+\delta$ for some small $\delta >0$.

\begin{Lemma}\label{lem:nonn}
   Let Assumption \ref{Assumptions} hold. Then $\hpsim$ is nonnegative.
\end{Lemma}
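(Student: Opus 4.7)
The plan is to use a nonlocal analogue of the Stampacchia truncation method and derive a Volterra inequality for $A(t) := \|\hpsim_-(\cdot,t)\|_{L^2_{M^m}(\O)}^2$, where $\hpsim_- := \max(-\hpsim,0)$ denotes the negative part. Since $\hpsim\in L^2(\I;W^{1,2}(\O))$ after the $n\to\infty$ limit, $-\hpsim_-$ is an admissible test function in \cref{Eq:GalerkinM}$_2$, so I would substitute $\zeta=-\hpsim_-$ and treat the resulting four groups of terms. Crucially, the initial datum satisfies $\hpsim_0\geq 0$: indeed $\hpsi_0\geq 0$ by \cref{Assumptions}, $M/M^m\geq 0$, and $T_\l$ is nondecreasing with $T_\l(0)=0$ (because $T_\l'=\Gamma_\l\geq 0$), so $\hpsim_0=T_\l(\hpsi_0 M/M^m)\geq 0$ and therefore $G(\hpsim_0)=0$ for the convex function $G(s)=\tfrac12 s_-^2\in C^1(\R)$.

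For the nonlocal time term I would apply the convex form of the fundamental identity \cref{eq:fund} pointwise in $(x,q)$ with this $G$, whose derivative is $G'(s)=-s_-$; integrating over $\O$ with the $M^m$-weight (which is independent of $t$) yields
\begin{equation*}
\int_\O M^m(-\hpsim_-)\,\pt(k*[\hpsim-\hpsim_0])\dxq \;\geq\; \tfrac{1}{2}\ddt(k*A)(t).
\end{equation*}
The convective term vanishes: using $\hpsim\,\nabla_x\hpsim_-=\tfrac12\nabla_x(\hpsim_-^2)$ a.e., integration by parts together with $\div_x\um=0$ and $\um\cdot n_{\p\Omega}=0$ kills the contribution. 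The two diffusion terms give the nonnegative quantities $\|\nabla_x\hpsim_-\|_{L^2_{M^m}}^2$ and $\|\nablaq\hpsim_-\|_{L^2_{M^m}}^2$ after using the identities $\nabla_x\hpsim\cdot\nabla_x\hpsim_-=-|\nabla_x\hpsim_-|^2$ and analogously in $q$.

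The only genuinely dangerous term is the drag $-(M\Lambda_\l(\hpsim)(\nabla\um)q,\nablaq\hpsim_-)_\O$. The point is that $\nablaq\hpsim_-=0$ on $\{\hpsim\geq 0\}$, while on $\{\hpsim<0\}$ one has $|\Lambda_\l(\hpsim)|=|\hpsim\Gamma_\l(\hpsim)|\leq\|\Gamma\|_\infty\hpsim_-$. Combined with the uniform bound \cref{Eq:BoundNablaUmn} on $\|\nabla\um\|_{L^\infty}$, the boundedness of $q\in D$, the inequality $M\leq M^m$, and Young's inequality, this term is controlled by $\tfrac12\|\nablaq\hpsim_-\|_{L^2_{M^m}}^2+C(\l,m)A(t)$. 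Absorbing and discarding the remaining nonnegative gradient contributions gives the Volterra-type differential inequality
\begin{equation*}
\tfrac{1}{2}\ddt(k*A)(t)\;\leq\;C(\l,m)\,A(t)\qquad\text{for a.e. }t\in(\I).
\end{equation*}
Since $A(0)=0$ and hence $(k*A)(0)=0$, convolving with $\tilde k$ and invoking the inverse convolution property \cref{FracProp} produces $A(t)\leq 2C(\l,m)(\tilde k*A)(t)$ for a.e.\ $t$. The Henry--Gronwall inequality \cite[Corollary 1]{fritz2022time} then forces $A\equiv 0$, which means $\hpsim_-\equiv 0$ and thus $\hpsim\geq 0$.

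The main technical obstacle I expect is the applicability of the fundamental identity to a convex $G$ that is not defined on $\R_{\geq 0}$ only: the ambient space-time statement \cref{Eq:Chain2} in the paper assumes $G\in C^1(\R_{\geq 0};\R_{\geq 0})$ and $u\geq 0$, but here we must apply it to $\hpsim$ which is \emph{a priori} sign-changing. This is a minor accounting issue rather than a real obstruction, since \cref{eq:fund} already holds for $G\in C^1(U)$ with arbitrary open $U\subset\R$, and the convex-case inequality is recovered term-by-term: the remainder integral is nonnegative by convexity, and the combination of the $k(t)$-boundary contributions telescopes to $-G(u_0)k(t)$ once $G(u)-G(u_0)-G'(u)(u-u_0)\leq 0$ is used. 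This verification is the only delicate step; the rest of the argument is a direct adaptation of the classical Stampacchia method to the $\mathcal{P}\mathcal{C}$-type time derivative.
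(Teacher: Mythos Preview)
Your proposal is correct and follows essentially the same argument as the paper's proof: test \cref{Eq:GalerkinM}$_2$ with $-\hpsim_-$, apply the convex Alikhanov inequality with $G(s)=\tfrac12 s_-^2$ (the paper phrases this as $H(y)=\tfrac12(y_+)^2$ applied to $-\hpsim$), exploit $\hpsim_0\geq 0$, handle the diffusion and convective terms as you describe, bound the drag term via $\|\nabla\um\|_{L^\infty}\leq C(m,\l)$ and Young's inequality, and conclude with the inverse convolution property and Henry--Gronwall. Your careful discussion of why the convex fundamental identity applies to a sign-changing $\hpsim$ is in fact more explicit than the paper, which simply cites Alikhanov's inequality \cref{Eq:Chain} at that step.
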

\begin{proof}
The nonnegative part of a function $w$ is defined by $[w]_+:=\max\{0,w\}$. As $[\cdot]_{+}$ is a Lipschitz-continuous function over $\mathbb{R}$, we deduce that $-[-\hpsim]_+$ preserves the spatial $W^{1,2}$-regularity and the integrability-in-time of $\hpsim$. Therefore, $-[-\hpsim]_+$ is a valid test function in \cref{Eq:GalerkinM}$_2$.
 Indeed, testing \cref{Eq:GalerkinM}$_2$ with $-[-\hpsim]_+$ gives
\begin{equation}\label{Eq:GalerkinTestedNegativePart}\begin{aligned}
    &\langle M^m \pt(k*[-\hpsim+\hpsim_0]),[-\hpsim]_+ \rangle - (M^m\um(-\hpsim),\nabla[-\hpsim]_+)_{\O}   \\
    &\quad + (M^m\nabla(-\hpsim),\nabla[-\hpsim]_+)_{\O}+(M^m\nablaq (-\hpsim),\nablaq [-\hpsim]_+)_{\O} \\&=(M^m\Lambda_\l(-\hpsim)(\nabla \um) q,\nablaq [-\hpsim]_+)_{\O}.
\end{aligned}\end{equation}
Regarding the first term of \cref{Eq:GalerkinTestedNegativePart}, we note that the function $H(y)=\frac12 (y_+)^2$ is convex with derivative $H'(y)=y_+$. Thus, we infer from Alikhanov's inequality \cref{Eq:Chain} that
$$\begin{aligned} 
&\langle M^m \pt (k* [-\hpsim+\hpsim_0]),[-\hpsim]_+ \rangle \\
&\quad \geq \frac12 \pt\Big(k*\Big[ \|[-\hpsim]_+\|^2_{L^2_{M^m}(\O)}-\|[-\hpsim_0]_+\|^2_{L^2_{M^m}(\O)}\Big]\Big) \\
&\quad = \frac12 \pt(k* \|[-\hpsim]_+\|^2_{L^2_{M^m}(\O)}),
\end{aligned}$$
where we have used that $[-\hpsim_0]_+=0$; this equality follows from the definition of $\hpsi_0^m$ (recall that 
$\hpsi_0^m:=\hpsi_0 M/M^m$), the assumed nonnegativity of $\hpsi_0$ (see Assumption \ref{Assumptions}), the nonnegativity of $M$ and the positivity of $M^m$.
The remaining terms on the left-hand side of \cref{Eq:GalerkinTestedNegativePart} are dealt with using a standard argument, resulting in 
$$\begin{aligned} 
(M^m\nabla(-\hpsim),\nabla [-\hpsim]_+)_\O &=\|\nabla[-\hpsim]_+\|^2_{L^2_{M^m}(\O)}, \\
((\um \cdot \nabla) (-\hpsim),[-\hpsim]_+)_\O &=0.\end{aligned}$$
It remains to bound the term on the right-hand side of \cref{Eq:GalerkinTestedNegativePart}. To this end, we use that $\|\nabla u^m\|_{L^\infty(\Omega_T)}\leq C(m,\l)$ in conjunction with the definition of $\Lambda_\l$, see \cref{Def:Tl}, to deduce that
\begin{align*} 
&(M^m\Lambda_\l(-\hpsim)(\nabla \um) q,\nablaq [-\hpsim]_+)_{\O} \\
&\quad \leq C(m,\l) \|[-\hpsim]_+\|_{L^2_{M^m}(\O)}^2 +  \frac{1}{2} \|\nablaq [-\hpsim]_+\|_{L^2_{M^m}(\O)}^2.
\end{align*}
Putting everything together, we arrive at the inequality
$$\begin{aligned}&\frac12 \pt(k* \|[-\hpsim]_+\|^2_{L^2_{M^m}(\O)})+\frac12 \|\nablaxq[-\hpsim]_+\|_{L^2_{M^m}(\O)}^2\leq C(m,\l)\|[-\hpsim]_+\|_{L^2_{M^m}(\O)}^2.
\end{aligned}$$
We convolve this inequality with $\tilde k$ and use the inverse convolution property \cref{FracProp}. Lastly, we apply the Henry--Gronwall inequality \cite[Corollary 1]{fritz2022time} to infer that
$\|[-\hpsim]_+\|^2_{L^2_{M^m}(\O)}=0$, which implies
$\hpsim \geq 0$ almost everywhere in $\O_T$.
\end{proof}

Our first $m$-independent estimate is for 
\[\varrho^m:= \int_D M^m \hpsim \dq\]
over the space-time domain $\Omega_T$. Note that, thanks to Lemma \ref{lem:nonn}, $\varrho^m \geq 0$ on $\Omega_T$.

\begin{Lemma} \label{Lem:Rhom}
    Let Assumption \ref{Assumptions} hold. Then $\rhom$ satisfies the uniform bound
    \begin{equation} \label{Eq:BoundsRhom}
    \|\rhom\|_{L^\infty(\Omega_T)}
 \leq C \|\rho_0\|_{L^\infty(\Omega)}.
\end{equation}
\end{Lemma}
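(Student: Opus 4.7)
The plan is to derive an autonomous nonlocal advection--diffusion equation for $\rho^m$ by integrating out the configuration variable $q$ in the Fokker--Planck equation~\eqref{Eq:GalerkinM}, and then to run a maximum-principle argument by testing against the positive part of $\rho^m-K$, where $K:=\|\rho_0\|_{L^\infty(\Omega)}$. Taking test functions of the form $\zeta(x,q)=\eta(x)$ with $\eta\in W^{1,2}(\Omega)$ kills the drag and $q$-diffusion terms; since $M^m$ depends only on $q$, it commutes with $\pt$, $\nabla_x$, and integration over $D$. Integrating the remaining terms over $D$ then yields, for a.e.\ $t\in(0,T)$,
\[
\langle \pt(k*[\rho^m-\rho^m_0]),\eta\rangle_\Omega + (\nabla \rho^m,\nabla \eta)_\Omega - (u^m\rho^m,\nabla \eta)_\Omega = 0,
\]
with $(k*[\rho^m-\rho^m_0])(0)=0$. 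Moreover, the elementary bound $0\le T_\l(s)\le s$ for $s\ge 0$ combined with the definition $\hat\psi^m_0=T_\l(\hat\psi_0 M/M^m)$ gives $\rho^m_0\le \int_D M\,\hat\psi_0\,\mathrm{d}q=\rho_0\le K$ a.e.\ in $\Omega$.

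I would then insert the pointwise-in-$t$ test function $\eta=[\rho^m(\cdot,t)-K]_+$, which is admissible for a.e.\ $t$ thanks to $\rho^m\in L^2(0,T;W^{1,2}(\Omega))$ inherited from the bounds on $\hat\psi^m$. The convective term vanishes: splitting $\rho^m=[\rho^m-K]_+ +K$ on $\{\rho^m>K\}$ and integrating by parts, using $\div u^m=0$ and $u^m\cdot n_{\partial\Omega}=0$, both resulting pieces integrate to zero. The diffusion supplies the nonnegative contribution $\|\nabla[\rho^m-K]_+\|_{L^2(\Omega)}^2$. To deal with the nonlocal time derivative I would invoke the convex chain inequality~\eqref{Eq:Chain2} with $G(s):=\tfrac{1}{2}[s-K]_+^2$, a nonnegative convex $C^1$ function with $G'(s)=[s-K]_+$; since $G(\rho^m_0)=0$ by the bound of Step~1, this gives
\[
\int_\Omega [\rho^m-K]_+\,\pt(k*[\rho^m-\rho^m_0])\,\mathrm{d}x \ge \tfrac{1}{2}\ddt\bigl(k*\|[\rho^m-K]_+\|_{L^2(\Omega)}^2\bigr),
\]
and combining the three contributions yields the differential inequality
\[
\tfrac{1}{2}\ddt\bigl(k*\|[\rho^m-K]_+\|_{L^2(\Omega)}^2\bigr) + \|\nabla[\rho^m-K]_+\|_{L^2(\Omega)}^2 \le 0.
\]

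Convolving with the resolvent $\tilde k$ and using the Sonine inverse-convolution property~\eqref{FracProp}, together with the vanishing of the initial datum, produces
\[
\tfrac{1}{2}\|[\rho^m(t)-K]_+\|_{L^2(\Omega)}^2 + \bigl(\tilde k*\|\nabla[\rho^m-K]_+\|_{L^2(\Omega)}^2\bigr)(t) \le 0.
\]
Both left-hand terms are nonnegative, hence both vanish; in particular $\rho^m\le K$ a.e.\ in $\Omega_T$. Combined with the nonnegativity $\rho^m\ge 0$ furnished by \cref{lem:nonn} and $M^m>0$, this yields the sharper inequality $\|\rho^m\|_{L^\infty(\Omega_T)}\le \|\rho_0\|_{L^\infty(\Omega)}$, i.e.\ the claim with $C=1$. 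The main obstacle I anticipate is that the convex chain inequality~\eqref{Eq:Chain2} is formally stated under the assumption $k\in W^{1,1}((0,T))$, which excludes the prototypical Abel kernel. I would circumvent this by first approximating $k$ by a sequence of mollified $W^{1,1}$ kernels, applying~\eqref{Eq:Chain2} at the approximate level, and then passing to the limit using the linearity of the convolution and the lower semicontinuity of the resulting norms; this is analogous to the argument the paper already uses to extend Alikhanov's inequality~\eqref{Eq:Chain} to general $\PC$ pairs.
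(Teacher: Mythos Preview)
Your proposal is correct and follows the same strategy as the paper: integrate out $q$ by taking $q$-independent test functions to obtain an autonomous nonlocal advection--diffusion equation for $\rho^m$, note that $\rho^m_0\le\rho_0$, and conclude via a maximum principle. The only difference is one of presentation: the paper invokes Zacher's maximum principle for nonlocal diffusion equations \cite[Theorem~3.2]{zacher2008boundedness} as a black box, whereas you supply a self-contained Stampacchia-type argument (test with $[\rho^m-K]_+$, use the convex chain inequality, convolve with $\tilde k$), which is precisely the mechanism behind that cited result and mirrors the nonnegativity proof in \cref{lem:nonn}.
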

\begin{proof}
By choosing a $q$-independent test function $\bar\zeta(x,t) \in W^{1,2}(\Omega)$ in the $m$-th approximation \cref{Eq:GalerkinM}$_2$, we have  the following nonlocal convection-diffusion equation that governs the evolution of $\rhom=(M^m ,\hpsim )_D$:
$$\langle \pt(k*[\rhom-\rhom_0]),\bar\zeta\rangle - (\um\rhom,\nabla \bar\zeta)_\Omega + (\nabla \rhom, \nabla\bar\zeta)_\Omega = 0 \quad \forall \bar\zeta \in W^{1,2}(\Omega),$$
supplemented with the initial condition $(k*(\rhom-\rhom_0))(0)=0$ where we define $\rhom_0$ by
$$0 \leq \rhom_0(x) := (M^m,T_\l(\psim_0(x,\cdot)))_D \leq (M, \psi_0(x,\cdot))_D=\rho_0(x).$$
Since $\div   \um =0$ and $\rho_0 \in L^\infty(\Omega)$ thanks to Assumption \ref{Assumptions}, we deduce the desired uniform bound \cref{Eq:BoundsRhom} by the maximum principle for nonlocal-in-time differential equations, see \cite[Theorem 3.2]{zacher2008boundedness}.
\end{proof}

Next, we derive a crucial $m$-uniform bound on the tuple $(\um,\hpsim)$ in suitable norms and then pass to the limit $m \to \infty$ in the $m$-th approximation \cref{Eq:GalerkinM} to the $\l$-truncated system \cref{Def:Truncation_Velocity}, \cref{Def:Truncation_PDF}. In contrast with the previous energy estimate, we derive at this step an entropy estimate by choosing an appropriate convex test function. This is essential for the final energy (in)equality and the handling of the nonlinearities present in the system. We stress that the key technical difference between the corotational model considered in \cite{fritz2024analysis} and the general noncorotational model studied here is that in the case of the noncorotational model the 
absence of an $m$ independent bound on $\|\nabla\um\|_{L^\infty(\Omega_T)}$ poses significant technical difficulties; in contrast, in the case of the corotional model, where the gradient of the velocity field in the Fokker--Planck equation is replaced by its skew-symmetric part, such a bound is not required in the proof of the existence of a global weak solution, and that then greatly simplifies the analysis.

\begin{Lemma}[$m$-uniform estimates]
    \label{Lem:Mindep} Let Assumption \ref{Assumptions} hold. For the tuple $(\um,\hpsim)$, the following a priori estimate holds:
     $$\begin{aligned} 
     & \|\um\|_{L^\infty(\I;L^2(\Omega))}^2 + \|\nabla \um\|_{L^2(\Omega_T)}^2+ \|\pt \um\|_{L^2(\I;[W^{1,2}_{0,\div  }(\Omega)]^*)}^2  + \|\um\|_{L^{2(d+2)/d}(\Omega_T)}^2 \\ &\quad +\|\mathbb{S}_\l(\hpsim)\|_{L^\infty(\I;L^\infty(\Omega))} +\|\hpsim\ln(\hpsim)\|_{L^{1}(\I;L^1_{M^m}(\O))}\\
     &\quad + \Big\|\nablaxq \sqrt{\hpsim}\Big\|_{L^2(0,T;L^2_{M^m}(\O))}^2\leq C(\l).
         \end{aligned}$$
\end{Lemma}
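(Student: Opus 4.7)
The plan is to combine the standard Navier--Stokes energy identity with a logarithmic relative entropy identity for the Fokker--Planck equation, exploiting the fact that the truncations $T_\ell$ and $\Lambda_\ell$ have been tailor-made so that the dominant nonlinear coupling terms cancel between the two equations.

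\textbf{Step 1: velocity energy.} I would test \eqref{Eq:GalerkinM}$_1$ with $w = u^m$. The convective term vanishes by integration by parts using $\div u^m = 0$, and the standard computation yields
\[
\tfrac{1}{2}\ddt \|u^m\|_{L^2(\Omega)}^2 + \|\nabla u^m\|_{L^2(\Omega)}^2 + (\mathbb{S}_\ell(\hpsim),\nabla u^m)_\Omega = (f,u^m)_\Omega.
\]
Using the representation \eqref{sl} of $\mathbb{S}_\ell$ (with $T_\ell'=\Gamma_\ell$) and an integration by parts in $q$, the coupling term can be rewritten as $\sum_j\int_\O M\,\Gamma_\ell(\hpsim)\, \nabla_{q^j}\hpsim \otimes q^j : \nabla u^m$.

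\textbf{Step 2: Fokker--Planck entropy.} Because Lemma \ref{lem:nonn} gives $\hpsim \geq 0$, I would test \eqref{Eq:GalerkinM}$_2$ with $\zeta = \log(\hpsim + \delta)$ (admissible since we have already taken $n\to\infty$) and send $\delta\to 0$ at the end, with the companion convex entropy $G_\delta(s)=(s+\delta)\log(s+\delta)-(s+\delta)$. Three things happen: (i) the nonlocal time term produces, via the fundamental identity \eqref{Eq:Chain2} for convex $G_\delta$, a lower bound by $\ddt(k*[\|G_\delta(\hpsim)\|_{L^1_{M^m}}-\|G_\delta(\hpsim_0)\|_{L^1_{M^m}}])$; (ii) the two diffusion terms collapse to $4\|\nabla_{x,q}\sqrt{\hpsim}\|_{L^2_{M^m}(\O)}^2$ after using $\nabla\hpsim/\hpsim = 2\nabla\sqrt{\hpsim}/\sqrt{\hpsim}$ (in the limit $\delta\to 0$); (iii) the transport term with $u^m\hpsim$ disappears by $\div u^m=0$; and (iv) the drag term becomes $\sum_j\int_\O M\,\Gamma_\ell(\hpsim)\,(\nabla u^m)q^j\cdot \nabla_{q^j}\hpsim$, since $\Lambda_\ell(s)/s=\Gamma_\ell(s)$. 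This is precisely the stress coupling term from Step~1 with the opposite sign.

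\textbf{Step 3: summation, inverse convolution, Gronwall.} Adding the two identities, the stress–drag coupling cancels exactly (this is the raison d'\^etre of the definitions of $T_\ell$ and $\Lambda_\ell$). Absorbing $(f,u^m)$ by Young's inequality gives
\[
\tfrac{1}{2}\ddt\|u^m\|_{L^2}^2 + \tfrac{1}{2}\|\nabla u^m\|_{L^2}^2 + \ddt\big(k*[\|G(\hpsim)\|_{L^1_{M^m}}-\|G(\hpsim_0)\|_{L^1_{M^m}}]\big) + 4\|\nabla_{x,q}\sqrt{\hpsim}\|^2_{L^2_{M^m}} \le \tfrac{1}{2}\|f\|_{L^2}^2.
\]
Integrating in time and taking the supremum yields the bounds on $u^m$ and $\nabla u^m$. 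Convolving the surviving nonlocal contribution with $\tilde k$ and applying the inverse convolution property \eqref{FracProp} together with \eqref{Eq:KernelNorm} converts the $k$-convolution of $\|G(\hpsim)\|_{L^1_{M^m}}$ into the desired $L^1(\I)$-bound on $\hpsim\log\hpsim$ in $L^1_{M^m}(\O)$, and analogously yields the time-integrated bound on $\nabla_{x,q}\sqrt{\hpsim}$. The $L^\infty$-in-space bound on $\mathbb{S}_\ell(\hpsim)$ is immediate from $|T_\ell(s)| \le 2\ell$ and the boundedness of $M$ and $q$ on $\D$.

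\textbf{Step 4: remaining bounds.} The Ladyzhenskaya-type estimate $\|u^m\|_{L^{2(d+2)/d}(\Omega_T)} \le C\|u^m\|_{L^\infty(I;L^2)}^{2/(d+2)}\|u^m\|_{L^2(I;W^{1,2})}^{d/(d+2)}$ (via Sobolev embedding) gives the interpolation bound. For the dual-norm bound on $\pt u^m$, I would test \eqref{Eq:GalerkinM}$_1$ against an arbitrary $w\in W^{1,2}_{0,\div}(\Omega)$, bounding $\Gamma_\ell(|u^m|^2)u^m\otimes u^m$ by $\ell\|u^m\|_{L^2}\|w\|_{W^{1,2}}$, the viscous term by $\|\nabla u^m\|_{L^2}\|\nabla w\|_{L^2}$, the stress term by $\|\mathbb{S}_\ell\|_{L^\infty}\|\nabla w\|_{L^2}$, and $f$ trivially; taking a supremum over $\|w\|_{W^{1,2}}\le 1$ and squaring/integrating gives the required $L^2(I;[W^{1,2}_{0,\div}]^*)$ estimate.

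The principal difficulty is Step 2: the entropy test function $\log\hpsim$ is singular where $\hpsim$ vanishes, so one must regularize (e.g.\ $\log(\hpsim+\delta)$), verify that the fundamental identity \eqref{Eq:Chain2} applies uniformly in $\delta$ so as to produce a lower bound in which $\delta$ can be sent to $0$, and simultaneously keep all dependence on the $\ell$-truncation explicit (so that later, in Step 4 of the overall scheme, the $\ell \to \infty$ passage to the limit can be carried out). The nonnegativity result \cref{lem:nonn} is essential for this whole machinery to make sense.
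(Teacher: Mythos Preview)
Your proposal is correct and follows essentially the same route as the paper: combine the Navier--Stokes energy identity (tested with $u^m$) with the Fokker--Planck entropy inequality (tested with $G_\delta'(\hpsim)$ and using Alikhanov's inequality \eqref{Eq:Chain2}), so that the stress--drag coupling cancels exactly through the $T_\ell/\Lambda_\ell$ construction, then integrate in time and read off all the bounds. One minor slip in Step~3: after integrating, the $L^1(\I)$-entropy bound is extracted from $(k*\|G(\hpsim)\|_{L^1_{M^m}})(t)\leq C$ directly via \eqref{Eq:KernelNorm} (i.e., $k\geq k(T)$ on $(0,T)$), not by ``convolving with $\tilde k$'' and invoking \eqref{FracProp}---indeed the paper explicitly emphasises that at this point one must \emph{integrate} (so that the coupling with the Navier--Stokes identity survives) rather than convolve.
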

\begin{proof}
First, we test the approximated Navier--Stokes equation \cref{Eq:GalerkinM}$_1$ with $\um$, which gives
\begin{equation*}  
\begin{aligned}\frac12 \ddt \|u^m\|_{L^2(\Omega)}^2 +  \|\nabla u^m\|_{L^2(\Omega)}^2 &= (f,\um)_{\Omega} -(\mathbb{S}_\l(\hpsim),\nabla \um)_\Omega.  
\end{aligned}\end{equation*}
We shall bound the first term on the right-hand side; however, following the ideas in \cite{barrett2011existence} and \cite{bulicek2013existence}, we leave the second term on the right-hand side intact because its  current form plays a crucial role in cancelling the noncorotational drag term in the Fokker--Planck equation. We integrate the inequality over $(0,t)$ to deduce that
\begin{equation} \label{Eq:NSTested} 
\begin{aligned} &\frac12  \|u^m(t)\|_{L^2(\Omega)}^2 +  \frac12 \|\nabla u^m\|_{L^2(\Omega)}^2 \\ &\quad \leq \frac12  \|u_0^m(t)\|_{L^2(\Omega)}^2 +  \frac12\|f\|_{L^{2}(0,T;W^{-1,2}(\Omega)^d)}^{2} -\int_0^t (\mathbb{S}_\l(\hpsim),\nabla \um)_\Omega \ds.  
\end{aligned}\end{equation}
Before turning our attention to the $m$-th approximation \cref{Eq:GalerkinM}$_2$ of the nonlocal Fokker--Planck equation, we first define the convex function $G_\delta \in C^\infty((-\delta,\infty)) \cap C^0([-\delta,\infty))$ by $$G_\delta(s):=(s+\delta)\ln (s+\delta)+\tfrac{1}{e}\geq 0,$$ where $\delta>0$ is arbitrary and we set $G(\hpsim):=G_0(\hpsim)$. Further, we define the function $T_{\delta,\l}(s):=\int_0^s \frac{\Lambda_\l(t)}{t+\delta} \dt$ that converges to $T_\l$ in $C([0,\infty))$ as $\delta \to 0$.
We test the approximated Fokker--Planck equation \cref{Eq:GalerkinM}$_2$ with $G_\delta'(\hpsim)=\ln(\hpsim+\delta)+1$, which gives
\begin{equation}\label{Eq:LogTestedFP} \begin{aligned} &(M^m \pt (k*[\hpsim-\hpsim_0]),G_\delta'(\hpsim))_\O  +(M^m\nabla\hpsim,\nabla G_\delta'(\hpsim))_\O \\
&\qquad + ( M^m \nablaq \hpsim,\nablaq G_\delta'(\hpsim))_\O \\ &\quad =(M^m \um \hpsim,\nabla G_\delta'(\hpsim))_\O+({M} \Lambda_\l(\hpsim)(\nabla \um)q,\nablaq G_\delta'(\hpsim))_\O.
\end{aligned}\end{equation}
Regarding the first term of the left-hand side of \cref{Eq:LogTestedFP}, we apply Alikhanov's inequality for convex functions, see \cref{Eq:Chain}, to estimate it from below as follows:
$$\begin{aligned} (M^m \pt (k*[\hpsim-\hpsim_0]),G_\delta'(\hpsim))_\O  \geq \pt\left(k* \int_\O M^m (G_\delta(\hpsim)-G_\delta(\hpsim_0)) \dxq \right)\\ =\pt\Big(k*\Big[ \|G_\delta(\hpsim)\|_{L^1_{M^m}(\O)}-\|G_\delta(\hpsim_0)\|_{L^1_{M^m}(\O)}\Big]\Big) .
\end{aligned}$$
For the second term on the left-hand side of \cref{Eq:LogTestedFP}, we have
$$\lim_{\delta\to 0}(M^m\nabla \hpsim,\nabla G_\delta'(\hpsim))_{\O}=\lim_{\delta\to 0}(M^mG_\delta''(\hpsim),|\nabla \hpsim|^2)_{\O}=4 \Big\|\nabla \sqrt{\hpsim}\Big\|_{L^2_{M^m}(\O)}^2,$$
and in the same manner we handle the third term on the left-hand side of \cref{Eq:LogTestedFP}. The first term on the right-hand side of \cref{Eq:LogTestedFP} is zero after taking the limit $\delta \to 0$, because $$\nabla G_\delta'(\hpsim)=G_\delta''(\hpsim) \nabla\hpsim=\frac{\nabla\hpsim}{\hpsim+\delta},$$  and thus, by the monotone convergence theorem, we deduce that
$$-\lim_{\delta\to 0}(u^m,M^m\hpsim\nabla G_\delta'(\hpsim))_{\O}=-(u^m,M^m\nabla \hpsim)_\O=(\div\,   u^m,M^m \hpsim)_\O=0.$$
Furthermore, for the remaining term on the right-hand side of \cref{Eq:LogTestedFP}, we compute the limit $\delta \to 0$ as follows:
\begin{equation*}\begin{aligned}  \lim_{\delta\to 0}({M} \Lambda_\l(\hpsim) (\nabla u^m)q , \nablaq G_\delta'(\hpsim))_{\O} &=\lim_{\delta\to 0} ({M} q\nabla u^m, \nablaq T_{\delta,\l}(\hpsim))_{\O} \\ &=-\sum_{j=1}^K(\nablaqj({M}\otimes  q^j)\nabla u^m, T_\l(\hpsim))_{\O} \\
&=(\mathbb{S}_\l(\hpsim),\nabla \um)_\O,
\end{aligned}\end{equation*}
where we used integration by parts in the second step and the boundary term vanishes thanks to fact that ${M}|_{\partial D}=0$. 
Taking the limit $\delta \to 0$ in \cref{Eq:LogTestedFP} and using the above considerations, we obtain the upper bound
\begin{equation*} \begin{aligned} &\pt\Big(k*\Big[ \|G(\hpsim)\|_{L^1_{M^m}(\O)}-\|G(\hpsim_0)\|_{L^1_{M^m}(\O)}\Big]\Big)  + 4 \Big\|\nablaxq \sqrt{\hpsim}\Big\|_{L^2_{M^m}(\O)}^2\\ &\quad\leq (\mathbb{S}_\l(\hpsim),\nabla \um)_\O.
\end{aligned}\end{equation*}
At a first glance, the next natural step might seem to be to convolve this estimate with $\tilde k$ and use the inverse convolution property \cref{FracProp} to obtain that $G(\psim)$ is bounded in $L^\infty(0,T;L^1_{M^m}(\O))$. However, we have to take care of the term on the right-hand side of the inequality. It is essential that it cancels with the  right-hand side of \cref{Eq:NSTested}  as otherwise the term cannot be controlled in terms of the available bounds. Therefore, we do not convolve but integrate. As a result, we only obtain that $G(\psim)$ is bounded in $L^1_{M^m}(\O_T)$ but not necessarily in $L^\infty(0,T;L^1_{M^m}(\O))$; i.e., 
\begin{equation}\label{Eq:LogTestedFP2} \begin{aligned} & \Big(k*\|G(\hpsim)\|_{L^1_{M^m}(\O)}\Big)(t) + 4\Big\|\nablaxq \sqrt{\hpsim}\Big\|_{L^2(0,t;L^2_{M^m}(\O))}^2\\[-.2cm] &\quad \leq \|k\|_{L^1(0,T)}\|G(\hpsim_0)\|_{L^1_{M^m}(\O)} + \int_0^t (\mathbb{S}_\l(\hpsim),\nabla \um)_\O \ds,
\end{aligned}\end{equation}
where we further used that $(k*1)(t)=\|k\|_{L^1(0,t)} \leq \|k\|_{L^1(0,T)}$. Since $k$ is nonincreasing, its infimum is given by $k(T)$ and thus, we can estimate the first term on the left-hand side of \cref{Eq:LogTestedFP2} from below by
$$\Big(k*\|G(\hpsim)\|_{L^1_{M^m}(\O)}\Big)(t) \geq k(T) \|G(\hpsim)\|_{L^1(0,t;L^1_{M^m}(\O))}.$$
We add the estimate \cref{Eq:LogTestedFP2} to the integrated inequality \cref{Eq:NSTested} that we obtained from testing the Navier--Stokes equations. We observe that the nonlinear terms involving the $\l$-truncated extra-stress tensor on the right-hand sides of the two inequalities cancel each other. More precisely, we obtain the $m$-uniform bound 
\begin{equation}\begin{aligned}
    &\frac12 \|u^m(t)\|_{L^2(\Omega)}^2 + \frac12  \|\nabla u^m\|_{L^2(\Omega_T)}^2+ \|G(\hpsim)\|_{L^1_{M^m}(\O_T)} +4 \Big\|\nablaxq \sqrt{\hpsim} \Big\|_{L^2_{M^m}(\O_T)}^2 \\ &\,\,\, \leq \frac12 \|u^m_0\|_{L^2(\Omega)}^2 + \|k\|_{L^1(0,T)}\|G(\hpsim_0)\|_{L^1_{M^m}(\O)}+\frac12\|f\|_{L^{2}(0,T;W^{-1,2}(\Omega)^d)}^{2} \leq C(\l),
\end{aligned}\label{Eq:Entropy} \end{equation}
where we additionally used that $\um_0$ is the $L^2_{0,\rm{div}}(\Omega)$ orthogonal projection of $u_0$ in the final inequality. We also recall that $\hpsim_0$ was defined as $\hpsim_0=\hpsi_0M/M^m$, see \cref{Eq:DefinitionInitials}, to remove the $m$-dependency on the right-hand side in the transition to the right-hand side of the final inequality. 
By standard $L^p$-$L^q$ interpolation inequalities, we conclude by
\begin{equation} \label{Eq:InterpolatedUm}
    \|\um\|_{L^{2(d+2)/d}(\Omega_T)} \leq C(\l).
\end{equation}
By the definition of the $\l$-truncated stress tensor $\mathbb{S}_\l$, see \cref{sl}, and the presence of the truncation operator, we can directly deduce
$$\|\mathbb{S}_\l(\hpsim)\|_{L^\infty(0,T;L^\infty(\Omega))} \leq C(\l).$$

At this point, only the estimate of $\pt \um$ as stated in the estimate of the lemma is missing. Indeed, we consider an arbitrary element $w \in L^2(0,T;W^{1,2}_{0,\div  }(\Omega))$. Thanks to the truncation in the convection term of $u^m$, see \cref{Eq:GalerkinM}$_1$, and the boundedness of $\mathbb{S}_\l(\hpsim)$, we obtain the following inequality 
$$\begin{aligned}
|\langle\pt u^m,\Pi_m w \rangle|
&= \big|(\Gamma_\l(|\um|^2) \um \otimes \um- \nabla \um- S_\ell(\hpsim),\nabla \Pi_m w)_{\Omega_T} + (f,\Pi_m w)_{\Omega_T} \big|
\\ &\leq C(\l) \|w\|_{L^2(\I;W^{1,2}_{0,\div  }(\Omega))},
\end{aligned}$$
where we have bound each of the terms appearing on the right-hand side by means of H\"older's inequality and the $m$-uniform energy estimate \cref{Eq:Entropy}.
By taking the supremum over all functions in $L^2(\I;W^{1,2}_{0,\div  }(\Omega))$, we obtain that 
$\pt u^m$ is bounded in $L^2(\I;[W^{1,2}_{0,\div  }(\Omega)]^*)$.
 \end{proof} 
 
Next, we consider the limit process $m \to \infty$ in the $m$-th approximation \cref{Eq:GalerkinM} to prove that the limit tuple $(\ul,\hpsil)$ of $(\ulm,\hpsilm)$ satisfies the $\l$-approximation \cref{Eq:Lapproximation}. Hence, we prove \cref{Lem:ExistenceLapprox} as already stated above. By the $m$-uniform bounds for $(\hpsim,\um)$ as stated in \cref{Lem:Mindep}, we can already extract a weakly/weakly-$*$ subsequence that. Together with the Aubin--Lions lemma to obtain strong convergence, this allows us to pass to the limit in the $m$-approximated Navier--Stokes equations. This is shown in the next lemma.
\begin{Lemma} \label{Lem:LimitNSm}
 Let Assumption \ref{Assumptions} hold. Then, the tuple $(\ul,\hpsil)$ solves the truncated Navier--Stokes equations \cref{Eq:GalerkinM}$_1$.
\end{Lemma}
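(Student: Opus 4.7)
The plan is to extract convergent subsequences from the $m$-uniform bounds of \cref{Lem:Mindep}, use compactness arguments to handle the nonlinear terms, and then pass to the limit $m \to \infty$ in the first Galerkin equation \cref{Eq:GalerkinM}$_1$. First, from \cref{Lem:Mindep} I would extract (non-relabelled) subsequences such that $u^m \overset{*}{\rightharpoonup} u^\l$ weakly-$*$ in $L^\infty(\I;L^2_{0,\div}(\Omega))$, $u^m \rightharpoonup u^\l$ weakly in $L^2(\I;W^{1,2}_{0,\div}(\Omega)) \cap L^{2(d+2)/d}(\Omega_T)$, $\pt u^m \rightharpoonup \pt u^\l$ weakly in $L^2(\I;[W^{1,2}_{0,\div}(\Omega)]^*)$, and $\mathbb{S}_\l(\hpsi^m) \overset{*}{\rightharpoonup} \chi$ weakly-$*$ in $L^\infty(\Omega_T)^{d\times d}$ for some limit tensor $\chi$ still to be identified with $\mathbb{S}_\l(\hpsil)$.

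Next, I would invoke the classical Aubin--Lions lemma \cref{Eq:aubinclassic} with the triple $W^{1,2}_{0,\div}(\Omega) \dhookrightarrow L^2_{0,\div}(\Omega) \hookrightarrow [W^{1,2}_{0,\div}(\Omega)]^*$ to obtain the strong convergence $u^m \to u^\l$ in $L^2(\I;L^2_{0,\div}(\Omega))$, which by interpolation with the $L^{2(d+2)/d}(\Omega_T)$ bound improves to strong convergence of $u^m \otimes u^m$ in $L^r(\Omega_T)^{d\times d}$ for some $r > 2$. Since $\Gamma_\l$ is continuous and bounded with support in $(-2\l,2\l)$, the dominated convergence theorem then yields $\Gamma_\l(|u^m|^2)\,u^m \otimes u^m \to \Gamma_\l(|u^\l|^2)\,u^\l \otimes u^\l$ strongly in $L^2(\Omega_T)^{d\times d}$, enabling passage to the limit in the convective term. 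The linear terms $(\nabla u^m, \nabla w_i)_\Omega$ and $\langle \pt u^m, w_i\rangle$ pass to the limit by the corresponding weak convergences, and the forcing term is trivial.

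The nontrivial step is to identify $\chi = \mathbb{S}_\l(\hpsil)$, for which I would use the novel nonlocal compactness result \cref{thm:nonlocal_aubin_measures}. The plan is to choose $Z = [W^{1,\infty}(\O)]^*$ (which is a dual space, as required), $X$ a weighted Sobolev-type space reflecting the $L^2_{M^m}$-bound on $\nabla_{xq}\sqrt{\hpsi^m}$ combined with the entropy bound on $G(\hpsi^m)$, and $Y$ an intermediate $L^1$-type space so that $X \dhookrightarrow Y$. The hypotheses would then be verified by (i) transforming the weighted gradient bound on $\sqrt{\hpsi^m}$ together with \cref{Lem:Rhom} into an $L^1(\I;X)$-bound for $M^m\hpsi^m$, and (ii) testing \cref{Eq:GalerkinM}$_2$ against arbitrary $\zeta \in C([\I];W^{1,\infty}(\O))$ and controlling each term by H\"older's inequality to obtain a uniform $\mathcal{M}(\I;Z)$-bound on $\pt(M^m k*[\hpsi^m-\hpsi_0^m])$. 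Applying \cref{thm:nonlocal_aubin_measures} yields, along a subsequence, strong convergence of $M^m\hpsi^m$ in $L^1(\O_T)$, and since $T_\l$ is bounded and Lipschitz while $M, \nabla_q M \in L^\infty(D)$, dominated convergence in the representation \cref{sl} identifies $\chi = \mathbb{S}_\l(\hpsil)$ with convergence in every $L^p(\Omega_T)^{d\times d}$.

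Having passed to the limit in \cref{Eq:GalerkinM}$_1$ for each fixed basis element $w_i$, a density argument based on the density of $\mathrm{span}\{w_i\}_{i\in\mathbb{N}}$ in $W^{1,2}_{0,\div}(\Omega)$ then extends the identity to arbitrary $w \in L^2(\I;W^{1,2}_{0,\div}(\Omega))$, yielding \cref{Eq:Lapproximation}$_1$. I expect the main obstacle to be the verification of the $\mathcal{M}(\I;Z)$-bound on the nonlocal derivative of $M^m\hpsi^m$ uniformly in $m$: unlike in the passage $n\to\infty$, where the Galerkin projection supplied an $L^2$-in-time bound on $\pt(M^m k*[\hpsi^{m,n}-\hpsi_0^{m,n}])$ with $m$-dependent constants, here we can only afford to bound this quantity in the weaker measure-valued norm, which is precisely what the new compactness \cref{thm:nonlocal_aubin_measures} is designed to accommodate; any step that would require stronger time-regularity of the nonlocal derivative must be avoided.
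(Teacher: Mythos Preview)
Your proposal is correct and follows essentially the same strategy as the paper, with one organizational difference worth noting. The paper's own proof of \cref{Lem:LimitNSm} is much shorter than yours: it extracts the same weak/strong convergences for $u^m$ via Aubin--Lions and simply \emph{labels} the weak-$*$ limit of $\mathbb{S}_\l(\hpsim)$ as $\mathbb{S}_\l(\hpsi)$ without justification, deferring the actual identification $\chi=\mathbb{S}_\l(\hpsil)$ to the subsequent \cref{Lem:StrongPsi}. You have folded that identification into the present proof, and your plan for it (bounding $\pt(k\ast[\psim-\psim_0])$ in $\mathcal{M}(0,T;[W^{1,\infty}(\O)]^*)$ by testing \cref{Eq:GalerkinM}$_2$, then invoking \cref{thm:nonlocal_aubin_measures}) is precisely what the paper does in \cref{Lem:StrongPsi}.

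One point to sharpen: you describe $X$ as ``a weighted Sobolev-type space''. In the paper the argument is carried out for $\psim=M^m\hpsim$ (as you correctly indicate), and the compactness triple is the \emph{unweighted} chain $X=W^{1,1}(\O)\dhookrightarrow Y=L^1(\O)\hookrightarrow Z=[W^{1,\infty}(\O)]^*$; the weighted bounds on $\sqrt{\hpsim}$ together with \cref{Lem:Rhom} are converted via H\"older into an unweighted $L^1(0,T;W^{1,1}(\O))$ bound on $\psim$. Keeping $X$ $m$-independent and unweighted is essential here, so you should make that choice explicit rather than leave it schematic.
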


\begin{proof}
We begin by reiterating our notational conventions that we are suppressing the index $\ell$ associated with the truncation process, so in the proof below we shall write $(\um,\hpsim)$ instead of $(u^{m,\ell}, \hpsi^{m,\ell})$ and $(u,\hpsi)$ instead of $(u^\ell,\hpsi^\ell)$.

By means of the $m$-uniform energy estimate in \cref{Lem:Mindep} and the classical Aubin--Lions lemma \cref{Eq:aubinclassic}, we deduce the existence of limit functions $u$ and $\hpsi$ with the convergences
    $$\begin{aligned}
        u^m &\overset{*}{\rightharpoonup} u &&\text{weakly-$*$ in } L^\infty(\I;L^2_{0,\div  }(\Omega)) \cap L^2(\I;W^{1,2}_{0,\div  }(\Omega)) \\
        & && \hspace{7cm} \cap L^{2(d+2)/d}(\Omega_T)^d, \\
        \pt u^m &\rightharpoonup \pt u &&\text{weakly in } L^{2}(\I;H_{0,\div  }^{-1}(\Omega)), \\
        u^m &\to u &&\text{strongly in } L^2(\I;L^r(\Omega)^d) \cap C([0,T];[W^{1,2}_{0,\div  }(\Omega)]^*), \\
        \mathbb{S}_\l(\hpsim) &\overset{*}{\rightharpoonup} \mathbb{S}_\l(\hpsi) &&\text{weakly-$*$ in } L^\infty(\I;L^\infty(\Omega)^{d\times d}),
    \end{aligned}$$
    where $r<2^*:=2d/(d-2)$ is less than the Sobolev conjugate $2^*$ of $2$, ensuring the compactness of the first of the two embeddings in the Gelfand triple $W^{1,2}_{0,\div  }(\Omega) \dhookrightarrow L^r(\Omega) \con [W^{1,2}_{0,\div  }(\Omega)]^*$ involved in the application of the Aubin--Lions lemma. 
Then it is standard to let $m \to \infty$ in the $m$-th Galerkin system of the Navier--Stokes equations \cref{Eq:GalerkinM}$_1$ to deduce that $u$ solves the truncated Navier--Stokes equations. Lastly,  the strong convergence $\um(0) \to u(0)$ in $[W^{1,2}_{0,\div  }(\Omega)]^*$ guarantees that $u$ satisfies the initial condition.
\end{proof}

Having passed the limit $m \to \infty$ in the approximated Navier--Stokes equations \cref{Eq:GalerkinM}$_1$ back in \cref{Lem:LimitNSm}, in order to prove that $(\ul,\hpsil)$ satisfies the $\l$-truncated system \cref{Def:Truncation_Velocity}, \cref{Def:Truncation_PDF} it remains to pass to the limit $m \to \infty$ in the $m$-th approximated nonlocal Fokker--Planck \cref{Eq:GalerkinM}$_2$ equation. To do this, we begin by noting that, thanks to \eqref{Eq:Entropy},
\begin{equation}\label{Eq:Entropy3}\|M^m G(\hpsim)\|_{L^1(\O_T)}\leq C(\l),
\end{equation}
where $G(\hpsim(t))=\hpsim\ln(\hpsim)+1/e$. In \cite{bulicek2013existence} this bound was used in conjunction with a compensated compactness argument based on the div-curl lemma, see \cite[Lemma A.1]{bulicek2013existence}, to infer strong convergence of $\hpsim$. In our setting, the presence of the nonlocal time derivative in the Fokker--Planck equation obstructs the application of the div-curl lemma. We are therefore forced to develop a different approach. We shall, instead, bound the nonlocal time derivative and apply the compactness result \cref{thm:nonlocal_aubin_measures} that we derived in \cref{Sec:Prelim}. Using this procedure, we obtain the following strong convergences for the sequences $\{\hpsim\}_{m \in \mathbb{N}}$ and $\{\psim\}_{m \in \mathbb{N}}$, where $\psim:=M^m\hpsim$, as $m \to \infty$.


\begin{Lemma} \label{Lem:StrongPsi} Let Assumption \ref{Assumptions} hold. Then, the following convergence results hold for any $s<\infty$:
    $$\begin{aligned}
\psim &\to \psi \quad &&\text {strongly in } L^s(\Omega_T ; L^{1}(\D)), \\
\mathbb{S}_\l(\hpsim) &\to \mathbb{S}_\l(\hpsi) \quad &&\text {strongly in } L^s(\Omega_T)^{d \times d}, \\
M^{m} \nablaxq\hpsim &\rightharpoonup M \nablaxq\hpsi \quad &&\text {weakly in } L^{1}(\O_T)^{d(K+1)}.
\end{aligned}$$
\end{Lemma}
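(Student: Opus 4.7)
The plan is to apply the nonlocal Aubin--Lions compactness result of Lemma~\ref{thm:nonlocal_aubin_measures} to the sequence $\{\psim\}$, where $\psim := M^m \hpsim$. The key observation is that $\pt(k * (\psim - \psi_0^m)) = M^m \pt(k*(\hpsim - \hpsim_0))$, since $M^m$ is independent of $t$, where $\psi_0^m := M^m \hpsim_0$; the Galerkin Fokker--Planck equation \cref{Eq:GalerkinM}$_2$ therefore directly controls the nonlocal derivative of $\psim$. I would take the Banach triple $X = W^{1,1}(\O) \dhookrightarrow L^1(\O) = Y \hookrightarrow [W^{1,\infty}(\O)]^* = Z$, with the compact embedding provided by Rellich--Kondrashov and $Z$ a dual Banach space (so Lemma~\ref{thm:nonlocal_aubin_measures} applies with $p=1$, noting $\tilde k \in L^1(0,T)$ from Assumption~\ref{Assumptions}). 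Strong convergence in $L^1(\O_T)$, combined with the $L^\infty(\Omega_T)$ bound $\int_D |\psim - \psi|\dq \leq \|\rho^m\|_{L^\infty} + \|\rho\|_{L^\infty} \leq C$ afforded by Lemma~\ref{Lem:Rhom}, will then upgrade by interpolation to the desired convergence in $L^s(\Omega_T; L^1(D))$ for every $s < \infty$.

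Establishing the spatial bound on $\psim$ in $L^1(0,T; W^{1,1}(\O))$ uses $\int_D \psim \dq = \rho^m \in L^\infty(\Omega_T)$ together with the decomposition $\nabla_{x,q}\psim = M^m \nabla_{x,q}\hpsim + \hpsim\,\nabla_q M$ (noting $\nabla_x M^m = 0$). The first summand is controlled in $L^2(0,T; L^1(\O))$ via the factorisation $M^m \nabla_{x,q}\hpsim = 2\sqrt{M^m\hpsim}\cdot\sqrt{M^m}\nabla_{x,q}\sqrt{\hpsim}$, the Cauchy--Schwarz inequality, and Lemmas~\ref{Lem:Mindep} and~\ref{Lem:Rhom}. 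The second summand $\hpsim\,\nabla_q M$ is the technical heart of the argument, because $\hpsim$ is only bounded in the weighted space $L^1_{M^m}(\O)$; here I would exploit the structure $\nabla_q M = -M\nabla_q U$ and the decay of $\nabla_q M$ near $\partial D$ inherited from $M \in C^1_0(\overline D)$, combined with the $L\log L$ entropy bound from Lemma~\ref{Lem:Mindep} via a de~la~Vall\'{e}e--Poussin argument, to secure a uniform $L^1(\O_T)$ bound on $\hpsim|\nabla_q M|$. For the nonlocal time derivative, I would test \cref{Eq:GalerkinM}$_2$ with $\zeta \in W^{1,\infty}(\O)$ and estimate each term via H\"{o}lder's inequality using the $m$-uniform bounds of Lemma~\ref{Lem:Mindep}, thereby obtaining $\pt(k*(\psim - \psi_0^m)) \in L^1(0,T; [W^{1,\infty}(\O)]^*) \hookrightarrow \mathcal{M}(0,T; [W^{1,\infty}(\O)]^*)$ uniformly in $m$.

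Lemma~\ref{thm:nonlocal_aubin_measures} then yields $\psim \to \psi$ strongly in $L^1(\O_T)$ along a subsequence, and the interpolation above produces the first claim. Passing to a further subsequence gives $\psim \to \psi$ almost everywhere in $\O_T$, whence $\hpsim = \psim/M^m \to \psi/M = \hpsi$ a.e.\ in $\O_T$ (since $M>0$ on $D$). Because $T_\l$ is Lipschitz with $|T_\l|\leq 2\l$, dominated convergence applied to the representation \cref{sl} of the extra-stress tensor immediately yields $\mathbb{S}_\l(\hpsim) \to \mathbb{S}_\l(\hpsi)$ strongly in $L^s(\Omega_T)^{d\times d}$ for every $s<\infty$. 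For the final, weak, convergence, the sequence $\{M^m\nabla_{x,q}\hpsim\}$ is bounded in $L^2(0,T;L^1(\O))$ and equi-integrable in $L^1(\O_T)$ (the Cauchy--Schwarz factorisation reduces equi-integrability to that of $\psim$, which follows from the $L\log L$ entropy bound via de~la~Vall\'{e}e--Poussin), so the Dunford--Pettis theorem provides weak $L^1$-compactness; the limit is identified as $M\nabla_{x,q}\hpsi$ by passing to the distributional limit in $M^m\nabla_{x,q}\hpsim = \nabla_{x,q}\psim - \hpsim\,\nabla_q M$, invoking the pointwise a.e.\ convergence of $\hpsim$ on the second term. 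The principal obstacle throughout is the uniform control of terms involving $\hpsim$ without the Maxwellian weight, where the boundary behaviour of $M\in C^1_0(\overline D)$ has to be used in an essential way.
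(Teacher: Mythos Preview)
Your overall strategy matches the paper's proof almost exactly: apply Lemma~\ref{thm:nonlocal_aubin_measures} with the triple $W^{1,1}(\O) \dhookrightarrow L^1(\O) \hookrightarrow [W^{1,\infty}(\O)]^*$ to obtain strong $L^1(\O_T)$-convergence of $\psim$, interpolate against the $L^\infty(\Omega_T;L^1(D))$ bound from Lemma~\ref{Lem:Rhom} to reach $L^s$, treat $\mathbb{S}_\l$ by dominated convergence plus interpolation, and handle the weak convergence of $M^m\nablaxq\hpsim$ through equi-integrability (Dunford--Pettis) followed by identification of the limit. Two minor procedural differences: the paper bounds the nonlocal time derivative directly in $\mathcal{M}(0,T;[W^{1,\infty}(\O)]^*) = [C([0,T];W^{1,\infty}(\O))]^*$ by testing with time-dependent $\zeta$, whereas you target $L^1(0,T;[W^{1,\infty}(\O)]^*)$; and to identify the weak limit as $M\nablaxq\hpsi$ the paper integrates by parts against $\varphi \in C_c^\infty(\O_T)$ (so that $\nabla_q M$ is only evaluated on compact subsets of $D$), while you propose to pass to the limit in the decomposition $M^m\nablaxq\hpsim = \nablaxq\psim - \hpsim\nabla_q M$.

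There is, however, a gap in your treatment of the term $\hpsim\nabla_q M$. The assumption $M \in C^1_0(\overline D)$ gives $M|_{\partial D} = 0$ and $\nabla_q M \in C(\overline D)$, but it does \emph{not} imply that $\nabla_q M$ decays near $\partial D$; generically $\nabla_q M$ is nonzero on the boundary. So your stated mechanism fails. The identity $\nabla_q M = -M\nabla_q U$ gives $\hpsim|\nabla_q M| \leq \psim|\nabla_q U|$, but $|\nabla_q U|$ blows up near $\partial D$ and no integrability of $\nabla_q U$ is part of Assumption~\ref{Assumptions}. The paper does not take this route at all: it simply asserts the unweighted bound $\|\hpsim\|_{L^1(\O_T)} \leq C(\l)$ from the entropy estimate and combines it with $\nabla_q M \in L^\infty(D)$, thereby bounding $\hpsim\nabla_q M$ in $L^1(\O_T)$ directly. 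Your instinct that this step deserves scrutiny is sound, but the fix you outline does not close it; you should either justify the unweighted $L^1$ bound on $\hpsim$ as the paper does, or avoid the term altogether via the paper's integration-by-parts identification of the weak limit.
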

\begin{proof}
The estimate \eqref{Eq:Entropy3} and the bound $M^m \leq C$ yield $\|G(\psim)\|_{L^1(\O_T)}\leq C(\l)$. Noting that the function $G(s) = s \ln(1 + s) +1/e$ is nonnegative, convex and has superlinear growth at infinity, De la Vall\'ee Poussin's criterion (see \cite[II.22]{dellacherie1978probabilities}) implies the uniform integrability of $(\psim)_m$ in $L^1(\O_T)$, that is,
\begin{equation}\label{Eq:UniformIntegrability}
\forall \varepsilon > 0 \quad \exists \delta > 0 \quad \forall m \in \mathbb{N} \quad \forall U \subset \O_T: |U| \leq \delta \implies \|\psim\|_{L^1(U)} \leq \varepsilon.
\end{equation}
Thus, by the Dunford--Pettis theorem, there exists a subsequence (not relabelled) and $\psi \in L^1(\O_T)$ such that
\[\psim \rightharpoonup \psi \quad \text{weakly in } L^1(\O_T).\]
Using the identity $\nabla_{x,q} \hpsim = 2\sqrt{\hpsim} \nabla_{x,q} \sqrt{\hpsim}$ and H\"older's inequality, we deduce
\begin{equation}\label{Eq:FirstBoundOfGrad}
\begin{aligned}
\|M^m \nabla_{x,q} \hpsim\|_{L^1(\O_T)} &= \Big\|2M^m \sqrt{\hpsim} \nabla_{x,q} \sqrt{\hpsim}\Big\|_{L^1(\O_T)} 
\\ &\leq 2\Big\|\sqrt{M^m}\sqrt{\hpsim}\Big\|_{L^2(\O_T)} \Big\|\sqrt{M^m}\nabla_{x,q} \sqrt{\hpsim}\Big\|_{L^2(\O_T)} 
\leq C(\l),
\end{aligned}
\end{equation}
where we used Lemma \ref{Lem:Rhom} and the bound \eqref{Eq:Entropy} in the last step. 
With this estimate, we are able to further bound the nonlocal derivative of $\hpsim$ and apply the compactness result \cref{thm:nonlocal_aubin_measures} to obtain a strong convergence result for $\hpsim$.  We start by considering an arbitrary element $\zeta \in C([\I];W^{1,\infty}(\O))$ as time-dependent test function. The reasoning behind this step is that we can multiply the $m$-approximated Fokker--Planck equation \cref{Eq:GalerkinM}$_2$ by a function $\eta \in C_c^1(0,T)$ and integrate the equation from $0$ to $T$; then, the density of $C_c^1(0,T) \otimes W^{1,2}(\O)$ in $L^2(0,T;W^{1,2}(\O)) (\supset C([\I];W^{1,\infty}(\O)))$ allows us to use $\zeta$ as a test function. We proceed as follows:
\[
\begin{aligned}
&\langle M^m\partial_t(k*[\hpsim-\hpsim_0]),\zeta \rangle \\
&\quad = (M^m\um\hpsim,\nabla \zeta)_{\O_T} - (M^m \nabla_{x,q}\hpsim,\nabla_{x,q}\zeta)_{\O_T} + (M\Lambda_\l(\hpsim)(\nabla \um) q,\nabla_q \zeta)_{\O_T} \\
&\quad \leq \|M^m\|_{L^\infty(\O)} \|\um\|_{L^\infty(0,T;L^2(\Omega))} \|\hpsim\|_{L^1(\O_T)} \|\nabla\zeta\|_{C([\I];L^\infty(\O))} \\
&\qquad + \|M^m \nabla_{x,q} \hpsim\|_{L^1(\O_T)} \|\nabla_{x,q} \zeta\|_{C([\I];L^\infty(\O))} \\
&\qquad + \|M\|_{L^\infty(\O)} \|\Lambda_\l(\hpsim)\|_{L^\infty(\O_T)} \|\nabla \um\|_{L^2(\Omega_T)} \|q\|_{L^\infty(D)} \|\nabla_q \zeta\|_{L^2(\O_T)} \\
&\quad \leq C(\l) \|\zeta\|_{C([\I];W^{1,\infty}(\O))}.
\end{aligned}
\]
Since  $M^m$ is time-independent, 
\[ M^m\partial_t(k*[\hpsim-\hpsim_0]) = \partial_t(k*[M^m\hpsim-M^m\hpsim_0]) = 
\partial_t(k*[\psim-\psim_0]),\]
and therefore, 
\[
\langle \partial_t(k*[\psim-\psim_0]),\zeta \rangle \leq C(\l) \|\zeta\|_{C([\I];W^{1,\infty}(\O))}.
\]
Hence, $\partial_t(k*[\psim-\psim_0])$ is bounded, uniformly in $m$, in $[C([\I]; W^{1,\infty}(\O))]^* = \mathcal{M}(0,T;[W^{1,\infty}(\Omega)]^*)$, 
the space of regular Borel measures on $[\I]$ with values in $[W^{1,\infty}(\O)]^*$ (cf. Thm. 2 in Ch. VI, Sec. 1 of Diestel and Uhl \cite{MR453964}).
Furthermore, $\psim$ is bounded in $L^1(0,T; W^{1,1}(\O))$ thanks to the equality
\[
\nabla_{x,q} \psim = \nabla_{x,q}(M^m\hpsim) = M^m \nabla_{x,q} \hpsim + \hpsim \nabla_{x,q} M^m,
\]
where the first term is bounded because of \cref{Eq:FirstBoundOfGrad}. As $M^m \in C^1(\overline{D})$ (since $M \in C^1(\overline D))$, see also \eqref{Def:ApproxMax}, and $\|\hpsim\|_{L^1(\O_T)} \leq C(\l)$, see \cref{Eq:Entropy} and the definition of $G$, the second term is also bounded uniformly in $m$.
We apply the compactness result \cref{thm:nonlocal_aubin_measures} with the Gelfand triple:
\[
X = W^{1,1}(\O) \quad \dhookrightarrow \quad Y = L^1(\O)\quad \hookrightarrow \quad Z = [W^{1,\infty}(\O)]^*.
\]
Therefore, $(\psim)_m$ is relatively compact in $L^1(0,T; L^1(\O))$, which implies the existence of a subsequence (not relabelled) such that:
\[
\psim \to \psi \quad \text{strongly in } L^1(\O_T).
\]

As, by hypothesis, $\psi_0 \geq 0$, and $\psi^m = M^m \hpsi^m$ and, according to Lemma \ref{lem:nonn}, $\hpsi^m \geq 0$, the inequality \eqref{Lem:Rhom} stated in Lemma \ref{Lem:Rhom} implies that
\[ \|\psi^m\|_{L^\infty(\Omega_T;L^1(D))} \leq C\|\psi_0\|_{L^\infty(\Omega;L^1(D))} .\]
Thus, 
$\psim$ is bounded in $L^\infty(\Omega_T; L^{1}(D))$ uniformly in $m$. This then, together with the strong convergence in $L^1(\O_T)$, implies that 
\[
\int_{\Omega_T} \|\psim(t) - \psi(t)\|_{L^1(D)}^s \dx \dt \leq \|\psim - \psi\|_{L^\infty(\Omega_T;L^1(D))}^{s-1} \|\psim - \psi\|_{L^1(\O_T)} \to 0.
\]
Thus, $\psim \to \psi$ strongly in $L^s(\Omega_T; L^1(D))$ as $m \to \infty$ for any $s \in [1,\infty)$.
Since $\hpsim = \psim/M^m$ and $M^m \to M$ uniformly, the strong convergence of $\psim$ implies the strong convergence
\begin{equation} \label{Eq:StrongHpsim} \hpsim \to \hpsi \quad \text{strongly in } L^s(\Omega_T;L^1(D))\end{equation}
for all $s \in [1,\infty)$. By the properties of the truncation operator $T_\l$ and the stress tensor $\mathbb{S}$, and using Lebesgue's dominated convergence theorem, we obtain:
\[
\mathbb{S}_\l(\hpsim) = \mathbb{S}(T_\l(\hpsim)) \to \mathbb{S}(T_\l(\hpsi)) = \mathbb{S}_\l(\hpsi) \quad \text{strongly in } L^1(\Omega_T)^{d \times d}.
\]
Interpolating between this strong convergence result and the $L^\infty(\Omega_T)$ bound on 
$\mathbb{S}_\l(\hpsim)$, it deduce the strong convergence of $\mathbb{S}_\l(\hpsim)$ in $L^s(\Omega_T)^{d \times d}$ to $\mathbb{S}_\l(\hpsi)$ as $m \to \infty$ for all $s \in [1,\infty)$. 

By the same reasoning as in \cref{Eq:FirstBoundOfGrad}, we obtain for any $U \subset \O_T$ with $|U|\leq \delta$ that
\begin{equation}\label{Eq:FirstBoundOfGrad2}
\begin{aligned}
\|M^m \nabla_{x,q} \hpsim\|_{L^1(U)} 
\leq 2 \|M^m\|_\infty \big\| \hpsim\big\|_{L^1(U)}^{1/2} \Big\|\sqrt{M^m}\nabla_{x,q} \sqrt{\hpsim}\Big\|_{L^2(U)}
\leq C(\l) \eps^{1/2},
\end{aligned}
\end{equation}
where $\eps$ is the same parameter as in \cref{Eq:UniformIntegrability}. Thus, $(M^m \nablaxq \hpsim)_m$ is uniformly integrable in $L^1(\O_T)^{d(K+1)}$, and thus we can extract a subsequence (not relabelled) such that
$$M^m \nablaxq \hpsim \rightharpoonup \phi \quad \text{weakly in } L^1(\O_T)^{d(K+1)}.$$
It remains to identify the weak limit $\phi$; we will show that $\phi = M \nabla_{x,q} \hpsi$. For any test function $\varphi \in C_c^\infty(\O_T)^{d(K+1)}$, we have by partial integration
\[
\begin{aligned}
\int_{\O_T} M^m \nabla_{x,q} \hpsim \cdot \varphi \dx \dq \dt &= - \!\int_{\O_T} \hpsim \, \nabla_{x,q} \cdot (M^m \varphi) \dx \dq \dt \\
&= -\! \int_{\O_T}\! \hpsim M^m \nabla_{x,q} \cdot \varphi \dx \dq \dt \\
& \quad \,-  \int_{\O_T}\! \hpsim \varphi \cdot \nabla_{x,q} M^m \dx \dq \dt.
\end{aligned}
\]
Now we pass to the limit $m \to \infty$. Since $M^m \nabla_{x,q} \hpsim \rightharpoonup \phi$ weakly in $L^1(\O_T)$, $\varphi$ is bounded, $M^m \to M$ uniformly, $\hpsim \to \hpsi$ strongly in $L^1(\O_T)$, we can easily pass to the limit on the left-hand side and in the first term on the right-hand side. It remains to deal with the second term on the right-hand side.
Since $\varphi$ has compact support in $\O_T$, its $q$-support is contained in some compact set $D' \subset D$. On $D'$, we have $\nabla_q M^m = \nabla_q M \in C(\overline{D'})$. Also, trivially, $\nabla_x M^m = \nabla_x M = 0$. Combined with the strong convergence $\hpsim \to \hpsi$ in $L^1(\O_T)$, we obtain:
\[
\int_{\O_T} \hpsim \varphi \cdot \nabla_{x,q} M^m \dx \dq \dt \to \int_{\O_T} \hpsi \varphi \cdot \nabla_{x,q} M \dx \dq \dt.
\]
Thus, in the limit $m \to \infty$ we have:
\[
\begin{aligned}
\int_{\O_T} \phi \cdot \varphi \dx \dq \dt &= - \int_{\O_T} \hpsi M \nabla_{x,q} \cdot \varphi \dx \dq \dt - \int_{\O_T} \hpsi \varphi \cdot \nabla_{x,q} M \dx \dq \dt \\
&=\int_{\O_T} M \nabla_{x,q} \hpsi \cdot \varphi \dx \dq \dt,
\end{aligned}
\]
for any  $\varphi \in C_c^\infty(\O_T)^{d(K+1)}$,
which implies that $\phi = M \nabla_{x,q} \hpsi$ almost everywhere on $\O_T$. Hence,
\[
M^m \nabla_{x,q} \hpsim \rightharpoonup M \nabla_{x,q} \hpsi \quad \text{weakly in } L^1(\O_T)^{d(K+1)}.
\]
\end{proof}

At this point, we have derived the necessary convergence properties of $\hpsim$ and $\um$ to pass to the limit the $m$-th approximated Fokker--Planck equation \cref{Eq:GalerkinM}$_2$, and thereby deduce the existence of a weak solution to the $\l$-truncated system \cref{Def:Truncation_Velocity}, \cref{Def:Truncation_PDF} as stated in \cref{Lem:ExistenceLapprox}.

\begin{proof}[Proof of \cref{Lem:ExistenceLapprox}]
We shall pass to the limit $m \to \infty$ in \eqref{Eq:GalerkinM}$_2$ to deduce that the nonlocal Fokker--Planck equation \cref{Def:Truncation_PDF} (with cutoff) is satisfied.
Passage to the limit $m \to \infty$ in the third and fourth term on the left-hand side of  \eqref{Eq:GalerkinM}$_2$ is straightforward, using the last weak convergence results stated in Lemma \ref{Lem:StrongPsi}. 
It remains to pass to the limit $m \to \infty$ in the first and second terms on the left-hand side of 
 \eqref{Eq:GalerkinM}$_2$ and the term on the right-hand side of  \eqref{Eq:GalerkinM}$_2$.
We note to this end that by interpolating between the weak convergence of $\um$ in $L^{2d/(d-2)}(\Omega_T)^d$ and its strong convergence in $L^2(0,T;L^r(\Omega)^d)$ for any $r \in [1,2d/(d-2))$ we have
$$\um \to u \quad\text{ strongly in } L^r(\Omega_T)^d$$
for all $r \in [1, 2d/(d-2))$. Combining this result and the strong convergence of $\psim$ in $L^s(\Omega_T;L^1(D))$ for any $s \in [1,\infty)$, see \cref{Lem:StrongPsi}, we deduce that
$$
\begin{aligned}
	M^{m} \hpsim \um=\psim \um &\to M\psi u && \text { strongly in } L^r(\Omega_T ; L^{1}(\D)^{d(K+1)}), \\
	\Lambda_{\l}(\hpsim) \nabla u^{m} &\rightharpoonup \Lambda_{\l}(\hpsi) \nabla u && \text { weakly in } L^2(\O_T)^{d \times d},
\end{aligned}
$$
where we used the strong convergence of $\hpsim$, see \cref{Eq:StrongHpsim}, and the Lebesgue dominated convergence theorem for the second convergence result.

Since all terms in \eqref{Eq:GalerkinM}$_2$ besides the first converge, we pass to the limit and obtain
$$\lim_{m \to \infty} \langle M^m\partial_t(k*[\hpsim-\hpsim_0]),\zeta \rangle = L(\zeta)$$ 
for all $\zeta \in C([\I];W^{1,\infty}(\O))$, where $L \in [C([\I];W^{1,\infty}(\O))]^*$ is to be identified.
To this end, consider the function $M k*[\hpsi-\hpsi_0]$. Since $\hpsim \to \hpsi$ strongly in $L^1(\O_T)$ and $k \in L^1(0,T)$, we have $M k*[\hpsim-\hpsim_0] \to M k*[\hpsi-\hpsi_0]$ strongly in $L^1(\O_T)$. For any $\eta \in C_c^1(0,T; W^{1,\infty}(\O))$, integration by parts yields
\[
\begin{aligned}
\langle \partial_t(M k*[\hpsi-\hpsi_0]), \eta \rangle = - \langle M k*[\hpsi-\hpsi_0], \partial_t \eta \rangle &= \lim_{m\to\infty} - \langle M^m k*[\hpsim-\hpsim_0], \partial_t \eta \rangle \\ &= \lim_{m\to\infty} \langle M^m\partial_t(k*[\hpsim-\hpsim_0]), \eta \rangle.
\end{aligned}
\]
As the right-hand side of this equality equals $L(\eta)$ and $C_c^1(0,T; W^{1,\infty}(\O))$ is dense in $C([\I];W^{1,\infty}(\O))$, it follows that $\langle \partial_t(M k*[\hpsi-\hpsi_0]), \zeta \rangle = L(\zeta)$ for all $\zeta$, which identifies the limit.
Now it is standard to pass to the limit $m \to \infty$ in \cref{Eq:GalerkinM} to deduce that the limit $(\ul,\hpsil)$ (with the previously suppressed superscript $.^{\ell}$ reinstated in our notation) satisfies the $\l$-truncated system \cref{Def:Truncation_Velocity}, \cref{Def:Truncation_PDF}.

It remains to prove the energy inequality stated in \cref{Lem:ExistenceLapprox}. We shall do so by passing to the limit in all terms on the left-hand side of the $m$-uniform energy estimate \cref{Eq:Entropy}, using the weak lower semicontinuity of the norms and Fatou's lemma for the nonnegative function $G$. Furthermore, for the term involving the initial data on the right-hand side of the inequality, we notice that
$$
\|M^{m} G(T_{\l}(\hpsi_{0}^{m}))\|_{L^{1}(\O)} \to\|M G(T_{\l}(\hpsi_{0}))\|_{L^{1}(\O)} \leq C .
$$
Thus, we obtain the energy inequality \cref{Eq:EnergyLexistenc}, and we have proved \cref{Lem:ExistenceLapprox}.
\end{proof}

\subsection{Step 4: Uniform a priori estimates with respect to the truncation parameter $\ell$ and the final limit process $\ell \to \infty$}
In this last step, we shall derive $\l$-independent bounds, which will enable us to pass to the limit $\l \to \infty$ to prove \cref{Thm:Main}, that is, the existence of a weak solution to the nonlocal Navier--Stokes--Fokker--Planck system \cref{Def:System}. To this end, we reinstate the omitted index $\ell$.

\begin{proof}[Proof of \cref{Thm:Main}]
    We begin by recalling the uniform estimate \cref{Eq:EnergyLexistenc} stated in  \cref{Lem:ExistenceLapprox}, according to which
\begin{equation}\begin{aligned}
    &\|\ul\|_{L^\infty(0,T;L^2(\Omega))}^2 +  \|\nabla \ul\|_{L^2(\Omega_T)}^2 + \|\hpsil\ln\hpsil\|_{L^1_{M}(\O_T)}+\Big\|\sqrt{\hpsil} \Big\|_{L^2(0,T;H^1_{M}(\O))}^2 \\ &\quad + \|\rhol\|_{L^\infty(\Omega_T)} 
    + \|\mathbb{S}_\l(\hpsil)\|_{L^2(\Omega_T)}^2 \leq C;
\end{aligned}\label{Eq:Lestimate} \end{equation}
recall that $\rhol:=(M,\hpsil)_D$. As before, we obtain by function space interpolation that 
$\ul$ is bounded in $L^{2(d+2)/d}(\Omega_T)$ uniformly in $\ell$.
We consider an arbitrary function $w \in L^{4/(4-d)}(0,T;W^{1,2}_{0,\div  }(\Omega))$; such a $w$ is a valid test function in the $\l$-truncated Navier--Stokes system \cref{Def:Truncation_Velocity}. Thus, testing with $w$ we find that
\begin{align*}\begin{aligned}|\langle\pt \ul,&w \rangle|
=\big|(\Gamma_\l(|\ul|^2) \ul \otimes \ul- \nabla \ul- S_\ell(\hpsil),\nabla  w)_{\Omega_T} + (f,w)_{\Omega_T}\big|
\\&\leq C\|\Gamma_\l(|\ul|^2)\|_\infty \|\ul(s)\|_{L^\infty(0,T;L^2(\Omega))}^{2-d/2} \|\nabla\ul(s)\|_{L^2(\Omega_T)}^{d/2} \|\nabla w\|_{L^{4/(4-d)}(0,T;L^2(\Omega))}
\\[-0.1cm] &\hspace{-0.4cm} + \Big(\|\nabla \ul\|_{L^2(\Omega_T)} + \|\mathbb{S}_\l(\hpsil)\|_{L^2(\Omega_T)}+\|f\|_{L^2(\I;[W^{1,2}_{0,\div  }(\Omega)]^*)}\Big) \|w\|_{L^2(\I;W^{1,2}_{0,\div  }(\Omega))} \\
&\leq C\|w\|_{L^{4/(4-d)}(0,T;W^{1,2}_{0,\div  }(\Omega))},
\end{aligned}
\end{align*}
where we have used the $\l$-uniform bounds together with the uniform bound $\|\Gamma_\l(|\ul|^2)\|_\infty \leq 1$ in the last step.
Thus, taking the supremum over all functions in $L^{4/(4-d)}(0,T;W^{1,2}_{0,\div  }(\Omega))$ gives
$$\|\pt \ul \|_{L^{4/d}(0,T;[W^{1,2}_{0,\div  }(\Omega)]^*)} \leq C.$$
Using the $\l$-uniform bounds and applying the Banach--Alaoglu theorem and the Aubin--Lions lemma \cref{Eq:aubinclassic}, we obtain the existence of a subsequence $(\ul,\hpsil)$ (not relabelled) and limit functions $(u,\hpsi)$ that satisfy the following convergences:
    $$\begin{aligned}
        \ul &\overset{*}{\rightharpoonup} u &&\text{weakly in } L^\infty(\I;L^2_{0,\div  }(\Omega))\! \cap \!L^2(\I;W^{1,2}_{0,\div  }(\Omega))\!\cap\! L^{2(d+2)/d}(\Omega_T)^d, \\
        \pt \ul &\rightharpoonup \pt u &&\text{weakly in } L^{4/d}(\I;[W^{1,2}_{0,\div  }(\Omega)]^*), \\
        \ul &\to u &&\text{strongly in } L^2(\I;L^r(\Omega)^d) \cap C([0,T];[W^{1,2}_{0,\div  }(\Omega)]^*), \\
        \mathbb{S}_\l(\hpsil) &\rightharpoonup \mathbb{S}(\hpsi) &&\text{weakly in } L^2(\Omega_T)^{d\times d},
    \end{aligned}$$
    where $r<2^*$ is less than the Sobolev conjugate $2^*$ of $2$, ensuring the compactness of the first of the two embeddings involved in the Gelfand triple $W^{1,2}_{0,\div  }(\Omega) \dhookrightarrow L^r(\Omega) \con [W^{1,2}_{0,\div  }(\Omega)]^*$ in the application of the Aubin--Lions lemma.
With these convergence results, we can pass to the limit $\l \to \infty$ in the $\l$-truncated Navier--Stokes equation \cref{Def:Truncation_Velocity} to deduce that $(u,\psi)$ solves the variational form of the Navier--Stokes equation \cref{Def:SysVar}$_1$ as stated in \cref{Def:Weak}. It remains to prove that $(u,\psi)$ also satisfies the nonlocal Fokker--Planck equation \cref{Def:SysVar}$_2$.

To deduce an essential strong convergence result for $\hpsil$, we mimic the procedure from the proof of \cref{Lem:StrongPsi} where we obtained the strong convergence of $\hpsi^{m,\ell}$ by means of the nonlocal compactness result \cref{thm:nonlocal_aubin_measures}. In fact, we begin by considering the nonlocal Fokker--Planck equation \cref{Def:SysVar} for $M\pt(k*[\hpsil-\hpsil_0])$, integrate in time from $0$ to $T$ to infer for every test function $\zeta \in C([\I];W^{1,\infty}(\O))$ that
$$\begin{aligned}
&\langle M\pt(k*[\hpsil-\hpsil_0]),\zeta \rangle\\ &=(M\ul\hpsil,\nabla \zeta)_{\O_T}\!  - (M\nabla\hpsil,\nabla \zeta)_{\O_T}\!-(M\nablaq \hpsil,\nablaq \zeta)_{\O_T}\! +(M\Lambda_\l(\hpsil)(\nabla \ul) q,\nablaq \zeta)_{\O_T}\\
&\leq \|M\|_\infty \|\ul\|_{L^\infty(0,T;L^2(\Omega))}  \|\hpsil\|_{L^\infty(\Omega_T;L^1(D))} \|\nabla\zeta\|_{L^1(0,T;L^\infty(\O))}   \\ &\quad+ \|M\|_\infty\|\nabla_{x,q} \hpsil\|_{L^1(\O_T)} \|\nabla_{x,q} \zeta\|_{C([\I];L^\infty(\O))} \\ &\quad  + \|M\|_\infty \|\Lambda_\l(\hpsil)\|_{L^\infty(\Omega_T;L^1(D))} \|\nabla \ul\|_{L^2(\Omega_T)} \|q\|_\infty  \|\nablaq \zeta\|_{C([\I];L^\infty(\O))}.
\end{aligned}$$
Using the above $\l$-uniform bounds and the definition of $\Lambda_\l$, we can take supremum over all test function and obtain that
$$
\| M\pt(k*[\hpsil-\hpsil_0])\|_{[C([\I];W^{1,\infty}(\O))]^*} \leq C.
$$
Hence, because $M$ is independent of $t$ and $M(\hpsil-\hpsil_0) =\psi^\ell - \psi^{\ell}_0$, it follows that  
\[  \|\pt(k*[\psi^\l-\psi^{\l}_0])\|_{[C([\I];W^{1,\infty}(\O))]^*} \leq C.\]
Because $[C([\I];W^{1,\infty}(\O))]^* = \mathcal{M}(0,T;[W^{1,\infty}(\Omega)]^*)$,  we can apply the compactness result \cref{thm:nonlocal_aubin_measures} to infer the strong convergence
$$\psi^\l \to \psi  \quad\text { strongly in } L^1(\O_T).$$
Thus, in summary, by mimicking the arguments from the previous section, we deduce the following convergence results by interpolating similarly as in the proof of \cref{Lem:StrongPsi}:
$$
\begin{aligned}
\hpsil &\to \hpsi  &&\text { strongly in } L^{s}(\Omega_T ; L^{1}_M(\D)), \\
 \nablaxq\hpsil &\rightharpoonup \nablaxq\hpsi  &&\text { weakly in } L^{1}(\Omega_T;L^1_M(D)^{d(K+1)}), \\
	 \hpsil \ul &\rightharpoonup \hpsi u && \text { strongly in } L^r(\Omega_T ; L^{1}_M(\D)^{d(K+1)}), \\
	\Lambda_{\l}(\hpsil) \nabla \ul &\rightharpoonup \hpsil \nabla u && \text { weakly in } L^{2}(\Omega_T;L^1_M(\D)^{d \times d}), \\
    \pt (k*[\hpsil-\hpsil_0]) &\overset{*}{\rightharpoonup} \pt(k*[\hpsi-\hpsi_0]) &&\text{ weakly-$*$ in } \mathcal{M}(0,T;[W^{1,\infty}(\O)]^*), 
\end{aligned}
$$
for any $s \in [1,\infty)$ and any $r \in [1,2^*)$, where $2^*:=2d/(d-2)$.
We can now pass to the limit $\l \to \infty$ in \cref{Def:Truncation_Velocity}, \cref{Def:Truncation_PDF} to conclude that $(u,\psi)$ solves the variational form of the Navier--Stokes--Fokker--Planck system as stated in \cref{Def:Weak}. The verification of the 
 attainment of the initial condition follows the same line of argument as before.
\end{proof}

\subsection{Step 5: Uniqueness}

\begin{proof}[Proof of \cref{Thm:WeakStrong}] Let us consider two weak solutions $(u_1,\hpsi_1)$ and $(u_2,\hpsi_2)$ in the sense of \cref{Def:Weak} that satisfy the regularity assumptions stated in \cref{Thm:WeakStrong}. We consider their difference $(u,\hpsi):=(u_1-u_2,\hpsi_1-\hpsi_2)$ and note that this pair of functions satisfies the following variational problem:
\begin{equation}\label{Def:SysVarUni}\begin{aligned}
        &\langle \pt u,w \rangle -(u_1 \otimes u,\nabla w)_{\Omega}-(u \otimes u_2,\nabla w)_{\Omega} +  (\nabla u,\nabla w)_{\Omega}  +(S\hpsi,\nabla w)_{\Omega} =0, \\[.1cm]
         &\langle M\pt(k*\hpsi),\zeta \rangle + (M \nabla \hpsi,\nabla\zeta)_{\O}+ (M\nablaq \hpsi,\nablaq \zeta)_{\O}- k*(Mu_1\hpsi,\nabla\zeta)_{\O}& \\&\,\,\,  - k*(Mu\hpsi_2,\nabla\zeta)_{\O}   -  k*(M\hpsi_1 (\nabla u) q,\nablaq \zeta)_{\O}-  k*(M\hpsi (\nabla u_2) q,\nablaq \zeta)_{\O} =0 , 
    \end{aligned}\end{equation}
    for the same choice of test functions $w$ and $\zeta$ as in Definition \ref{Def:Weak}.
    Setting $w=u$  in \cref{Def:SysVarUni}$_1$ yields that \vspace{-.4cm}
\begin{equation}\label{Eq:UniqueU}\begin{aligned}&\frac12\ddt \|u\|_{L^2(\Omega)}^2  +  \|\nabla u\|^2_{L^2(\Omega)}  \\ &\quad =-((u \cdot \nabla) u_2,u)_{\Omega}-(\mathbb{S}(\hpsi),\nabla u)_{\Omega} \\ &\quad =((u \cdot \nabla) u,u_2)_{\Omega}-(\mathbb{S}(\hpsi),\nabla u)_{\Omega} \\
    &\quad \leq \frac12 \|\nabla u\|^2_{L^2(\Omega)} + C\|u\|_{L^2(\Omega)}^2 \|u_2\|_{L^4(\Omega)}^8+Cq_\infty^2 \|\nablaq \hpsi\|_{L^2_M(\O)}^2, 
    \end{aligned}\end{equation}
    where we made use of the specific form of the extra-stress tensor $S$, see \cref{Def:TensorS}.
    Furthermore, choosing $\zeta=\hpsi$ in \cref{Def:SysVarUni}$_2$ gives with Alikhanov's, H\"{o}lder's, Young's inequalities\vspace{-.2cm}
\begin{equation}\label{Eq:UniquePsi}\begin{aligned}
        &\frac12 \ddt \big(k*\|\hpsi\|_{L^2_M(\O)}^2\big) + \|\nablaxq \hpsi\|_{L^2_M(\O)}^2  \\ &\quad\leq (Mu\hpsi_2,\nabla\hpsi)_{\O}   + (M\hpsi_1 (\nabla u) q,\nablaq \hpsi)_{\O}+  (M\hpsi (\nabla u_2) q,\nablaq \hpsi)_{\O}
        \\ &\quad \leq C\|\hpsi_2\|_\infty^2\|u\|_{L^2(\Omega)}^2+\tfrac{\tilde k(T)}{2} \|\nabla\hpsi\|_{L^2_M(\O)}^2   + q_\infty\|\hpsi_1\|_\infty^2 \|\nabla u\|_{L^2(\Omega)}^2 \\[-.1cm] & \qquad + q_\infty \|\nablaq\hpsi\|_{L^2_M(\O)}^2+ q_\infty \|\hpsi\|_{L^6(\O)} \|\nabla u_2\|_{L^3(\Omega)} \|\nablaq \hpsi\|_{L^2_M(\O)} .
    \end{aligned}\end{equation}
    At this point, we integrate \cref{Eq:UniqueU} over $(0,t)$ and convolve \cref{Eq:UniquePsi} with the resolvent kernel $\tilde k$, use the inverse convolution property \cref{FracProp} and add the inequalities to obtain \vspace{-.2cm}
$$\begin{aligned}
    & \frac12 \|u(t)\|_{L^2(\Omega)}^2  +  \Big(\frac12-q_\infty\|\hpsi_1\|_{L^\infty(0,T;L^\infty_M(\O))}^2\Big) \|\nabla u\|^2_{L^2(0,t;L^2(\Omega))}+\frac12 \|\hpsi(t)\|_{L^2_M(\O)}^2 \\ &\quad +\Big(\frac{\tilde k(T)}{2} -q_\infty-Cq_\infty^2-Cq_\infty\|\nabla u_2\|_{L^\infty(0,T;L^3(\Omega))}\Big) \|\nablaxq \hpsi\|_{L^2(0,t;L^2_M(\O))}^2 \\
    &\leq C\int_0^t \big(\|u_2(s)\|_{L^4(\Omega)}^8+\|\hpsi_2\|_\infty^2\big) \|u(s)\|_{L^2(\Omega)}^2 \ds  
\end{aligned}$$ 
Since $q_\infty$ is assumed to be sufficiently small, Gronwall's inequality with an integrable prefactor in the integral on the right-hand side yields the desired result.
\end{proof}
    \section{Outlook}\label{Sec:Outlook}
We have established the existence of large-data global-in-time weak solutions to a nonlocal Navier--Stokes--Fokker--Planck system describing dilute polymeric fluids with anomalous diffusion. The analysis successfully addresses three key challenges inherent to the coupled system: (i) the nonlocal temporal coupling between the macroscopic momentum balance and mezoscopic polymer configuration evolution equation, resolved through careful energy-entropy estimates; (ii) preservation of the physical structure via a maximum-principle argument, ensuring nonnegativity of the probability density function, in conjunction with an energy inequality; and (iii) development of a compactness argument using a novel nonlocal version of the Aubin--Lions lemma. The proof of uniqueness of weak solutions that possess  sufficient regularity completes the well-posedness theory, while our constructive approximation scheme provides a potential pathway for the construction of convergent numerical approximation methods. These results form a rigorous framework for the mathematical analysis of models of subdiffusive polymeric fluids that exhibit memory effects.

Several promising research directions emerge from this work. First, the development of structure-preserving numerical methods for nonlocal operators remains crucial, particularly discretisations that maintain entropy dissipation properties and preservation of the physically relevant property of finite extension of polymer molecules in FENE-type models. Probabilistic and Monte Carlo-type approaches for fractional PDEs have recently been explored in \cite{Kolokoltsov2021}, suggesting promising numerical frameworks for systems with memory. Second, the influence of the choice of the temporal fractional derivative (Caputo vs. Riemann--Liouville) on the behaviour of solutions to these models warrants detailed analytical and numerical investigation. Previous publications on time-fractional dilute polymer systems considered Hookean-type models, see \cite{beddrich2024numerical,beddrich2025numerical}, while we are mainly interested in the class of, physically more realistic, FENE-type models.  

    \backmatter
    
    \renewcommand*{\bibfont}{\normalfont\scriptsize}
    \setlength{\bibsep}{2pt}
    \bibliography{literature.bib}
	
\end{document}